\renewcommand{\le}{\leqslant}
\renewcommand{\ge}{\geqslant}
\newcommand{\p}{\mathcal{P}}
\newcommand{\ff}{\mathcal{F}}
\newcommand{\s}{\mathcal{S}}
\newcommand{\g}{\mathcal{G}}
\newcommand{\aaa}{\mathcal{A}}
\newcommand{\hh}{\partial}
\newcommand{\h}{\mathcal H}
\newcommand{\E}{\mathsf{E}}
\newcommand{\Prb}{\mathsf{P}}
\newcommand{\T}{\mathcal{T}}
\newcommand{\W}{\mathcal{W}}
\newtheorem{thm}{Theorem}
\newtheorem{lem}[thm]{Lemma}
\newtheorem{prop}[thm]{Proposition}
\newtheorem{obs}[thm]{Observation}
\title{Spread approximations for forbidden intersections problems}
\author{Andrey Kupavskii\footnote{Moscow Institute of Physics and Technology and St. Petersburg State University, Russia; Email: {\tt kupavskii@ya.ru}.}, Dmitrii Zakharov\footnote{
Department of Mathematics, Massachusetts Institute of Technology, Cambridge, MA 02139, USA.  
Email: {\tt zakharov2k@gmail.com}}}
\date{}
\begin{document}
\maketitle
\begin{abstract}
  We develop a new approach to approximate families of sets, complementing the existing `$\Delta$-system method' and `junta approximations method'. The approach, which we refer to as `spread approximations method', is based on the notion of $r$-spread families and builds on the recent breakthrough result of Alweiss, Lovett, Wu and Zhang for the Erd\H os--Rado `Sunflower Conjecture'. Our approach can work in a variety of sparse settings.

  To demonstrate the versatility and strength of the approach, we present several of its applications to forbidden intersection problems, including bounds
  on the size of regular intersecting families, the resolution of the Erd\H os--S\'os problem for sets in a new range and, most notably, the resolution of the $t$-intersection and Erd\H os--S\'os problems for permutations in a new range. Specifically, we show that any collection of permutations of an $n$-element set with no two permutations intersecting in at most (exactly) $t-1$ elements has size at most $(n-t)!$, provided $t\le n^{1-\epsilon}$ ($t \le n^{\frac{1}{3}-\epsilon}$) for an arbitrary $\epsilon>0$ and $n>n_0(\epsilon)$. Previous results for these problems only dealt with the case of fixed $t$. The proof follows  the structure vs. randomness philosophy, which proved to be very efficient in proving results throughout mathematics and computer science.

\end{abstract}

\section{Introduction}
We start the discussion not with the spread approximations method itself, but with some of its implications to questions in extremal set theory that deal with forbidden intersections. The method, as well as the structural results that it gives, will be discussed in the latter sections. %More applications are given in Section~\ref{sec5}.

We use standard notation $[n]$ for the set $\{1,\ldots, n\}$, $2^X$ for the power set of $X$ and ${X\choose k}$ for the set of all $k$-element subsets of $X$. A collection, or a family, $\ff$ of sets is {\it $t$-intersecting} if $|A\cap B|\ge t$ for any $A,B\in \ff$, and is called {\it intersecting} if it is $1$-intersecting.

One of the main themes of extremal combinatorics, with numerous applications in other branches of mathematics and computer science, is the study of hypergraphs or families of sets that avoid certain substructures.  Among the most influential results of this kind is the Erd\H os--Ko--Rado theorem \cite{EKR} that determines the size of the largest intersecting family of $k$-element subsets of $[n]$. It grew into a separate domain of extremal combinatorics that studies collections of various mathematical objects with restrictions on their intersections.

The theorem of Ahlswede and Khachatrian \cite{AK}, extending earlier results  by Frankl and F\"uredi \cite{F1, FF3}, determines  the largest $t$-intersecting families of $k$-element subsets in $[n]$ \emph{ for all $n,k,t$}. This result played an important role in the result of Dinur and Safra \cite{DS} that showed that vertex cover is hard to approximate via an efficient algorithm. (To mention some more connections between intersection problems and Computer Science,  Frankl and Wilson \cite{FW} used their forbidden intersections result in order to provide  explicit constructions of Ramsey graphs; Razborov and Vereshchagin \cite{RV} used cross-intersecting families in the context of satisfiability of DNF.)

A harder question, due to Erd\H os and S\'os  from 1971 (cf. \cite{E74}), still remains unsolved in general: what is the largest family $\ff\subset{[n]\choose k}$ that {\it avoids intersection $t-1$}, i.e. such that $|A\cap B|\ne t-1$ for any $A,B\in \ff$? This was answered for $n>n_0(k)$ and $2t<k$ by Frankl and F\"uredi \cite{FF1} using the so-called $\Delta$-system method. The variant of the $\Delta$-system method they use requires $n_0(k)$ to be at least doubly-exponential in $k$.

One of the big advancements in the area in the recent years came from the works of Keller and Lifshitz \cite{KL} and  Ellis, Keller and Lifshitz \cite{EKL}, who greatly developed the so-called junta method for dealing with forbidden intersections and similar extremal problems. In particular, their results gave the solution to the Erd\H os--S\'os question for almost any value of $n,k$, provided that $n>n_0(t)$, for some unspecified  function $n_0(t)$.  We also mention further developments by Keevash, Lifshitz, Long, and Minzer \cite{KLLM, KLLM2}.

We also should mention an important result due to Frankl and Wilson \cite{FW}, and later Frankl and R\"odl \cite{FR} that is situated on the other part of the spectrum of possible values of parameters. They gave strong upper bounds for the size of the largest family in ${[n]\choose k}$ that avoids intersection $t-1$, where both $k$ and $t$ were {\it linear} in $n$. Their results had several important consequences for problems in Discrete Geometry, such as Borsuk's problem, the chromatic number of the space and for more general Euclidean Ramsey Theory type questions (cf. \cite{FR2}, \cite{Rai1}). %\todo{Add a remark that their method only works when $k-t$ is a power of a prime? And in this case the bound is good for all parameters $n, k, t$, not necessarily linear?}

One of the consequences of our spread approximations method is the resolution of the Erd\H os--S\'os problem for $t$ that is allowed to grow {\it polynomially} with $n$.

\begin{thm}\label{thmintro1}
  Fix some $\alpha>1$ and $\beta\in (0,0.5)$ satisfying $\alpha>1+2\beta$ and let $k_0>0$ be a sufficiently large integer.  For any $k\ge k_0$ put $n = \lceil k^{\alpha}\rceil $ and  $t = \lceil k^{\beta}\rceil $. If $\ff\subset{[n]\choose k}$ avoids intersection $t-1$ then $|\ff|\le {n-t\choose k-t}$.
\end{thm}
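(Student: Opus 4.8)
The plan is to use the spread approximations method announced in the abstract. First I would set up the target: $\ff\subset{[n]\choose k}$ avoids intersection $t-1$, with $n=\lceil k^\alpha\rceil$, $t=\lceil k^\beta\rceil$, $\alpha>1+2\beta$, $\beta<1/2$. Since every family avoiding intersection $t-1$ that is moreover $t$-intersecting is bounded by the Ahlswede–Khachatrian / Frankl–Wilson bound, the main work is to reduce the general "avoiding $t-1$" case to a (near-)$t$-intersecting situation. The idea is: if $\ff$ is large, i.e. $|\ff|$ is close to or larger than ${n-t\choose k-t}$, then $\ff$ is not $r$-spread for a suitable $r$ (roughly $r\approx C\log k$ or a small power of $k$), because an $r$-spread family of $k$-sets in $[n]$ has bounded size by the counting/spread lemma. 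More precisely, I would iterate a "capturing" argument: either the current family is $r$-spread after normalizing by its most popular coordinates, in which case its size is $\le(\text{something small})\cdot{n-|S|\choose k-|S|}$, or there is a set $S$ contained in a positive fraction of members, and we pass to the link $\ff(S)=\{A\setminus S: A\in\ff, S\subset A\}$. Tracking how the forbidden-intersection condition transforms under taking links is the heart of the bookkeeping: removing $S$ shifts the forbidden intersection from $t-1$ to $t-1-|S\cap S'|$-type conditions on the links, so one keeps a small collection of "cores" and reduces to bounded-size structured pieces.

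Next, I would invoke the Alweiss–Lovett–Wu–Zhang spread lemma (and its quantitative refinements, e.g. by Rao or Bell–Chueluecha–Warnke) to convert non-spreadness into the existence of a small set $S$ with $|\ff(S)|\ge(\delta)^{-|S|}\cdot$ ... no: rather, spreadness with parameter $r$ implies that a random $r\log|U|$-size subset of the ground set is contained in some member of $\ff$ with high probability; the contrapositive gives that a large family must have a "popular" set. The spread approximation would then be a family $\s$ of "approximating sets" (analogous to the kernels of the $\Delta$-system method, or to the juntas of the junta method) such that every $A\in\ff$ contains some $S\in\s$ with $|S|$ small, $|\s|$ bounded, and the restriction of $\ff$ to each $S$ being $r$-spread for a controlled $r$. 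Because $t$ is only polynomially small in $n$ ($t\le n^{1/\alpha}$ essentially), the spread parameter one can afford, $r$, is polynomial in $k$, which is exactly what makes the argument go through in this range and is the advantage over the junta method that needs $n>n_0(t)$.

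Then comes the extremal step: one shows the approximating structure $\s$ must essentially be a single $t$-set (or a "Frankl-type" star/Ahlswede–Khachatrian configuration), using that two members of $\ff$ cannot intersect in exactly $t-1$ elements. If two small cores $S_1,S_2\in\s$ had $|S_1\cap S_2|$ small, one could pick $A_1\supset S_1$, $A_2\supset S_2$ in $\ff$ whose intersection could be tuned (using the spreadness of the links, which gives many choices of the "tails" $A_i\setminus S_i$) to have size exactly $t-1$ — contradiction. Hence all cores share a common $t$-element set $T$, $\ff$ is essentially $\{A: T\subset A\}$, giving $|\ff|\le{n-t\choose k-t}$; the small error terms from the approximation are absorbed because ${n-t\choose k-t}$ dominates $|\s|\cdot(\text{spread bound})$ in this regime.

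\textbf{Main obstacle.} I expect the crux to be making the "tune the intersection to exactly $t-1$" argument work uniformly: one needs the links $\ff(S)$ to be spread enough that for two cores $S_1,S_2$ with $|S_1\cup S_2|$ not too large and $|S_1\cap S_2|\le t-1$, there genuinely exist $A_1,A_2\in\ff$ with $|A_1\cap A_2|=t-1$ exactly — hitting an exact value, not just a range, requires a careful "intermediate value"/shifting step and enough room ($k-|S_i|$ large, guaranteed by $\alpha>1$), plus control that the tails avoid re-creating the forbidden intersection among themselves. A secondary difficulty is choosing the spread parameter $r$ and the iteration depth so that the accumulated multiplicative losses stay $o({n-t\choose k-t})$; this is where the hypothesis $\alpha>1+2\beta$ (rather than just $\alpha>1$) is presumably used — it gives the slack needed for $r$ to be a large enough power of $k$ while $t^2\ll n$ keeps the link conditions from degenerating.
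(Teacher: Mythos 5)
Your outline follows the same route as the paper: an iterative spread-approximation producing a family $\s$ of small cores, a contradiction argument showing that two cores $S_1,S_2$ with $|S_1\cap S_2|<t$ would let one build $F_1,F_2\in\ff$ with $|F_1\cap F_2|=t-1$ exactly, and a reduction of the non-trivial case via a sunflower-type analysis of $\s$. You also correctly identify the crux: hitting the intersection size $t-1$ \emph{exactly}. The paper's mechanism for this is worth spelling out, since your ``intermediate value / shifting step'' is the part that does not exist as stated: one runs the approximation with homogeneity parameter $\tau=1+O(\varepsilon\theta/t)$, i.e.\ extremely close to $1$ (not with a spread parameter that is merely a power of $k$), so that for a uniformly random padding set $H$ of size $\ell=t-1-|S_1\cap S_2|$ disjoint from $S_1\cup S_2$, the link sizes $|\ff_{S_i}(S_i\cup H)|$ are concentrated within a $(1+O(\varepsilon\theta))$ factor of their mean; a second-moment-free averaging argument then yields a single $H$ that is simultaneously good for $i=1$ and $i=2$, after which a random two-colouring of the complement makes the tails disjoint and $F_i=B_i\cup S_i\cup H$ intersect in exactly $(S_1\cap S_2)\cup H$. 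The near-$1$ homogeneity is the new idea of the Erd\H os--S\'os variant relative to the $t$-intersecting variant, and it is what forces $q\sim t^2\log n$ and hence the condition $\alpha>1+2\beta$; your plan attributes the slack to a different (and insufficient) accounting.

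The second genuine gap is the endgame. When $\s$ collapses to a single $t$-set $T$, you propose to ``absorb'' the remainder $\ff'=\ff\setminus\aaa(T)$ into the bound because ${n-t\choose k-t}$ dominates; that cannot give the exact inequality $|\ff|\le{n-t\choose k-t}$, since $|\ff\cap\aaa(T)|$ could equal ${n-t\choose k-t}$ and any nonempty $\ff'$ would then push $|\ff|$ strictly above the target. The paper instead shows that $\ff'\neq\emptyset$ forces a compensating \emph{deficit} inside the star: a set $A\in\ff'$ gives rise, after fixing its trace on $T$, to a family cross-intersecting with $\ff(T\cup X)$, and a Kruskal--Katona upper-shadow estimate shows that if $|\ff'|=\epsilon{n\choose k}$ then at least $\epsilon^{1/2}{n-2t\choose k-2t}>\epsilon{n\choose k}$ sets of the star are excluded from $\ff$. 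Without an argument of this kind (or the derangement-count used in the permutation case), the proof does not close.
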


Another type of structures that are very well-studied in the context of forbidden intersections are permutations. We denote by $\Sigma_n$ the family of all permutations $[n]\to [n]$. For two permutations $\sigma, \pi: [n]\to [n]$, the size of their intersection, denoted $|\sigma\cap \pi|$, is the number of $x\in [n]$ such that $\sigma(x) = \pi(x)$. It is convenient to identify a permutation $\sigma \in \Sigma_n$ with its graph, i.e. an $n$-element subset of $[n]^2$ consisting of pairs $(x, \pi(x))$, $x \in [n]$. Then the above notion of intersection of permutations translates to the usual set intersection of the graphs of these permutations.
All definitions (intersecting, avoiding intersection $t-1$ etc.) and questions discussed above carry over to the families of permutations.

Deza and Frankl \cite{DeF} showed that any intersecting family of permutations in $\Sigma_n$ has size at most $(n-1)!$.
This is the size of a family of permutations that map some element $x$ to another element $y$ or, in other words, the family of permutations whose graphs contain a fixed element $(x, y) \in [n]^2$.
Unlike in the case of the Erd\H os--Ko--Rado theorem for intersecting families of sets, even the uniqueness of the described extremal example was hard to obtain and was proved independently by Cameron and Ku \cite{CK} and Larose and Malvenuto \cite{LM}.

More generally, for $t\ge 1$, fix distinct elements $x_1, \ldots, x_t \in [n]$ and distinct $y_1, \ldots, y_t \in [n]$. The family of permutations $\sigma \in \Sigma_n$ such that $\sigma(x_i) = y_i$ for every $i=1, \ldots, t$ is $t$-intersecting and has cardinality $(n-t)!$. We call such families, and all their subfamilies, {\it trivial $t$-intersecting families}. We say that a $t$-intersecting family of permutations $[n]\to [n]$ is {\it non-trivial} if it is not contained in any trivial $t$-intersecting family.
Deza and Frankl proposed the following conjecture: the largest $t$-intersecting subfamily of $\Sigma_n$ has size $(n-t)!$ for $n>n_0(t)$. The conjecture was proved in a paper by Ellis, Friedgut and Pilpel \cite{EFP}. Recently, this was extended by Ellis and Lifshitz \cite{EL} to the Erd\H os--S\'os setting: for $n>n_0(t)$ any subfamily of $\Sigma_n$ that avoids intersection $t-1$ has size at most $(n-t)!$. We note that the proofs of the last two results are hard and require representation theory of symmetric groups together with Fourier analysis. We also note that there is a weak analogue of the Frankl and Wilson result \cite{FW} for permutations, which is due to Keevash and Long \cite{KeeL}.

Arguably the most impressive application of the spread approximations method that we found so far is the following result that greatly extends the aforementioned results of Ellis, Friedgut and Pilpel and of Ellis and Lifshitz. The proof is also much simpler and avoids the use of heavy machinery of the previous authors.

\begin{thm}\label{thmintro2}
  Let $n,t>0$ be integers and $\p\subset \Sigma_n$ be a family of permutations.
  \begin{itemize}
    \item[(i)] If $\p$ is $t$-intersecting then $|\p|\le (n-t)!$, provided $n> 2^{22} t\log_2^2 n$. Moreover, if $\p$ is non-trivial then it has size at most $\frac 23(n-t)!.$
    \item[(ii)] If $\p$ avoids intersection $t-1$ then $|\p|\le (n-t)!$, provided $n>2^{25}t^2\log_2^2 n$ and $n>2^{10}t^3 \log_2n$.
  \end{itemize}
\end{thm}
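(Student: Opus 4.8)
The plan is to derive both parts from the spread-approximation theorem (our main structural result, proved in the next sections) together with a single lemma asserting that spreadness and small intersections are incompatible, and then to run an induction on $t$. To begin, identify each $\sigma\in\Sigma_n$ with its graph, an $n$-element subset of $[n]^2$, so that $\p$ becomes an $n$-uniform family on the $n^2$-element set $[n]^2$. Fix a spreadness parameter $r$ that is roughly $\log_2^2 n$, chosen so that the Alweiss--Lovett--Wu--Zhang spread lemma (in its sharpened form) applies to $n$-uniform $r$-spread families with failure probability far below $\tfrac12$. Applying the spread-approximation theorem to $\p$ yields a small family $\s$ of seeds and a decomposition $\p=\bigsqcup_{S\in\s}\p_S$, where $\p_S$ consists of the members of $\p$ extending $S$ but no seed preceding it, and for each $S$ the link $\p_S^{\downarrow}=\{P\setminus S:P\in\p_S\}$ is $r$-spread (or $|S|$ has hit a prescribed cap). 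A seed that is not a partial permutation lies in no permutation, so we may discard it; thus every $S\in\s$ is a partial permutation, $\p_S^{\downarrow}$ is a family of permutations of an $(n-|S|)$-element set, and $|\p_S|=|\p_S^{\downarrow}|\le(n-|S|)!$.

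The key lemma is: \emph{a nonempty $r$-spread family of $m$-subsets of any ground set cannot be intersecting once $r\ge C\log_2 m$}, or equivalently, two nonempty $r$-spread families cannot be cross-intersecting. I would prove it by the ``two complementary random restrictions'' argument: for a random $W$ keeping each point independently with probability $\tfrac12$, the spread lemma forces $W$ to contain a member of the family with probability close to $1$, and likewise for the complement of $W$; a sample in which both events occur produces two disjoint members. Since $\p$ is $t$-intersecting, each link $\p_S^{\downarrow}$ is $(t-|S|)$-intersecting, so a seed with $|S|<t$ would give a nonempty $r$-spread intersecting family, which is impossible. Hence \emph{every seed satisfies $|S|\ge t$}, and therefore $|\p_S|\le(n-|S|)!\le(n-t)!$, with a further saving of a factor $n-t$ when $|S|>t$.

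The remaining, and main, task is to improve the union bound $|\p|\le|\s|\cdot(n-t)!$ to $|\p|\le(n-t)!$. Here I would use that the pieces are not merely contained in trivial $t$-intersecting families but become $r$-spread once the seed is removed; combining this with the cross-intersecting incompatibility above, and with the Frankl-type structure of families of $t$-subsets of $[n]^2$ that pairwise meet in at least $t-1$ elements, the goal is to show that $\p$ is, up to a lower-order error, contained in a single trivial $t$-intersecting family -- more precisely, that after contracting a common partial permutation of size $t-m$ for some $1\le m<t$, the family $\p$ becomes an $m$-intersecting family of permutations of an $(n-t+m)$-element set. The induction hypothesis then gives $|\p|\le(n-t+m-m)!=(n-t)!$, with equality forcing triviality; and if $\p$ is non-trivial, the contracted family is a non-trivial $m$-intersecting family of permutations, for which the induction, bottoming out at the Deza--Frankl/Cameron--Ku theorem, yields a bound of $(1-1/e+o(1))(n-t)!<\tfrac23(n-t)!$, the factor $1-1/e$ coming from the derangement estimate for the fraction of a trivial family that $t$-intersects a fixed permutation lying outside it. This proves (i).

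Part (ii) runs along the same lines, with ``$t$-intersecting'' replaced by ``avoids intersection $t-1$'': the link of a seed of size $s$ avoids intersection $t-1-s$, and the incompatibility one needs is that \emph{no nonempty $r$-spread family avoids intersection $t'-1$ for $t'\ge1$}. The hard part here is precisely this lemma: avoiding a single intersection value is a one-sided condition -- disjoint members are permissible as soon as $t'\ge2$ -- so the bare argument with $W$ and its complement no longer suffices and must be supplemented, for instance by first passing to a compressed (``shifted'') version of the family, or by iterating the random restriction to pin down a pair intersecting in exactly $t'-1$ points; this extra step is what forces the worse $t^2$ and $t^3$ dependence in (ii). I expect the aggregation step for (i) -- showing that the seeds collapse to a single size-$t$ partial permutation up to lower-order terms -- and the construction of the correct ``spreadness versus a single forbidden intersection'' lemma for (ii) to be the two main obstacles.
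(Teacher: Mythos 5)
Your high-level architecture coincides with the paper's (spread approximation of $\p$ into seeds with spread links, plus the incompatibility of spreadness with intersection constraints via the two complementary random restrictions), but both of the steps you flag as ``obstacles'' are genuine gaps, and the routes you sketch for them are not the ones that work. For part (i), what you actually derive is only that each individual seed has size at least $t$; the aggregation from $|\p|\le|\s|\cdot(n-t)!$ down to $(n-t)!$ requires two further inputs that you do not supply. First, one must show the seeds are \emph{pairwise} $t$-intersecting: for $S_1,S_2\in\s$ with $|S_1\cap S_2|<t$, one restricts each spread link to avoid the other seed (losing only a constant fraction by spreadness), and then the random bipartition of $[n]^2\setminus(S_1\cup S_2)$ produces $F_1\cup S_1,F_2\cup S_2\in\p$ with $|(F_1\cup S_1)\cap(F_2\cup S_2)|=|S_1\cap S_2|<t$. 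Second, one needs a quantitative statement that a \emph{non-trivial} $t$-intersecting family $\s$ of sets of size at most $q$ captures only $\varepsilon|\Sigma_n(T)|$ permutations; the paper proves this (Theorem~\ref{thmapproxtrivial}) by an iterative reduction of $\s$ to critical subfamilies, bounding each layer via sunflower-freeness and a covering-number argument. Your alternative --- contracting a common partial permutation of size $t-m$ and inducting on $t$ --- presupposes exactly the structural conclusion to be proved (that $\p$, up to lower order, lies over a common partial permutation), and no mechanism is offered for establishing it; the induction also cannot ``bottom out'' at Deza--Frankl, since the pieces $\p_S$ with $|S|>t$ do not assemble into a single $m$-intersecting family of permutations of a smaller set.

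For part (ii) the missing lemma is not of the form ``no nonempty $r$-spread family avoids intersection $t'-1$'' (which is false as stated: a single set avoids every intersection value below its size), and neither of your proposed fixes is viable --- $\Sigma_n$ is not closed under shifting, and ``iterating the random restriction'' does not by itself pin down the exact value $t-1$. The paper's mechanism is different: run the approximation with homogeneity parameter $\tau=2^{1/8t}$, so each link $\ff_{S_i}(S_i)$ is within a factor $1+O(1/t)$ of perfectly regular relative to $\Sigma_n(S_i)$. Then, for two seeds with $|S_1\cap S_2|<t-1$, a first-moment argument over a uniformly random partial permutation $H$ of size exactly $\ell=t-1-|S_1\cap S_2|$ disjoint from $S_1\cup S_2$ (comparing $\E|\g_j(H)|$ with the deterministic upper bound $\tau^{\ell}\gamma_j|\ff_{S_j}|$) produces a single $H$ for which both restricted links retain a constant fraction of their mass and remain $\Omega(n)$-spread; two disjoint members then yield permutations of $\p$ meeting in exactly $t-1$ points. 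This near-regularity requirement is precisely what forces $q=\Theta(t^2\log n)$ and the extra hypothesis $n>2^{10}t^3\log_2 n$. Finally, in both parts the remainder and the ``trivial $\s$ but non-trivial $\p$'' case are handled by an explicit derangement count (for (ii), showing $\g_{t-1}$ has size at least $\tfrac14 t^{-t}(n-t)!$), which your $(1-1/e)$ heuristic only covers for (i).
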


In fact, Theorem \ref{thmintro2} is a particular case of two more general results that we formulate and prove in Sections~\ref{sec3} (analogue of (i)) and~\ref{sec4} (analogue of (ii)). Roughly speaking, that results state that for a sufficiently `nice' family $\aaa \subset {[n] \choose k}$, the maximal size of a $t$-intersecting  (or $(t-1)$-avoiding) subfamily $\ff \subset \aaa$ is achieved on a family of the form $\aaa(T)$ for $T \in {[n] \choose t}$, that is, the family of sets $A \in \aaa$ containing a fixed set $T$ of size $t$. The dependencies of the parameters and the conditions are stricter for the $(t-1)$-avoidance.  %An analogous result also holds for subfamilies avoiding intersection $(t-1)$ though the dependence of parameters is somewhat weaker.
Theorem~\ref{thmintro2} can be derived as a corollary of these results for  $\aaa = \Sigma_n \subset {[n]^2 \choose n}$.

We remark that even though by Theorem \ref{thmintro2} the maximal size of a non-trivial $t$-intersecting family is less than $(n-t)!$ by a constant factor, one does not have an analogous statement for non-trivial families which avoid intersection $t-1$. Indeed, consider the following Hilton-Milner-type family: let $\sigma = (1 2 \ldots t)(t+1)\ldots (n)$ be a permutation that acts cyclically on the first $t$ elements and fixes the remaining $n-t$, and let $\p' \subset \Sigma_n$ be the family of permutations $\pi$ such that $\pi(x) = x$ for $x \in [t]$ and $|\pi \cap \sigma| \neq t-1$. Then the family $\p = \p' \cup \{\sigma\}$ is non-trivial and avoids intersection $t-1$. A simple computation shows that if $t \rightarrow \infty$ then $|\p| = (1 - o(1))(n-t)!$.

We note that the junta approximation results of \cite{EFP} and \cite{EL}, as well as the results for sets from \cite{KL} and \cite{EKL} came together with stability results that characterized the approximate structure of families that are close to extremal. Our approach also gives such  structural results, moreover, it allows finding structure in families of much smaller size than that suitable for the use of the junta method. Our general results in Section~\ref{sec3} and~\ref{sec4} allow for similar results in a sparse setting that have no analogues via the junta method. And, of course, this opens the way to exploring other Tur\'an-type problems for subfamilies of `nice' families, such as the family of all permutations.

Lastly, let us return to intersecting families of $k$-sets. On the one extreme, we have the  Erd\H os--Ko--Rado theorem, which is tight for families of sets containing a fixed element. On the other extreme, we have intersecting families that are regular (all elements of the ground set are contained in the same number of sets) or even symmetric (its group of automorphisms is transitive). Note that if a family is symmetric, then it is regular, but not vice versa. We show the following result that answers a question of  Narayanan.

\begin{thm}\label{thmintro3} Let $n,k,q\ge 1$ be integers such that $n> 2^{13} k\log_2(2k)$ and $n\ge 4q k$. Let  $\ff\subset {[n]\choose k}$ be a regular intersecting family. Then $|\ff|\le 2^{-q}{n\choose k}$.
\end{thm}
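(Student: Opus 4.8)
The plan is to run the spread approximation method, with the Alweiss--Lovett--Wu--Zhang spread lemma as the only external input, and to exploit regularity only at the very end. Set $\mu=|\ff|/\binom nk$; we must show $\mu\le 2^{-q}$, and may assume $\ff\neq\emptyset$. The first thing I would record (\emph{intersecting families are not spread}): if $\g$ is a nonempty intersecting family of sets of size $\le k$ and $r\ge C\log_2(2k)$ for the absolute constant $C$ of the spread lemma, then $\g$ is not $r$-spread. Indeed, were it $r$-spread, splitting the ground set uniformly into $W\sqcup W^c$, the spread lemma gives that each of $W,W^c$ contains a member of $\g$ with probability $\ge 3/4$, hence both do with positive probability; these two members are disjoint, contradicting intersection.

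\textbf{Building the approximation and showing it is intersecting.} Choose $r$ with $2q\le r\le n/(2k)$ and $r\ge 4C\log_2(2k)$ — possible exactly because $n\ge 4qk$ and $n>2^{13}k\log_2(2k)$, the constant $2^{13}$ chosen to dominate $C$ — and put $\theta=rk/n\le\tfrac12$ and $\eps_0=\tfrac34\,2^{-q}$. Greedily set $\ff^{(0)}=\ff$ and, while $|\ff^{(j-1)}|\ge\eps_0\binom nk$, let $S_j$ be a set maximizing $|\ff^{(j-1)}(S)|\,r^{|S|}$; by the remark above the maximizer is nonempty, the link $\ff^{(j-1)}_{S_j}=\{F\setminus S_j:F\in\ff^{(j-1)},\,S_j\subseteq F\}$ is $r$-spread by maximality, and one passes to $\ff^{(j)}=\{F\in\ff^{(j-1)}:S_j\not\subseteq F\}$. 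Let $\s=\{S_1,\dots,S_m\}$ and let $\ff^{(m)}$ be the remainder: then $|\ff^{(m)}|<\eps_0\binom nk$, every member of $\ff\setminus\ff^{(m)}$ contains some $S_j$, and from $r^{-|S_j|}|\ff^{(j-1)}|\le|\ff^{(j-1)}(S_j)|\le\binom{n-|S_j|}{k-|S_j|}\le(k/n)^{|S_j|}\binom nk$ one gets $\eps_0\le\theta^{|S_j|}$, hence $|S_j|\le\log_{1/\theta}(1/\eps_0)\le q\le r/2$. The crux is that $\s$ is intersecting. Suppose $S_i\cap S_j=\emptyset$ with $i<j$. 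Deleting from the $r$-spread family $\ff^{(i-1)}_{S_i}$ all members meeting $S_j$ removes at most $|S_j|/r\le\tfrac12$ of it and leaves an $(r/2)$-spread family of sets of size $\le k$ supported on $[n]\setminus(S_i\cup S_j)$, and symmetrically for $\ff^{(j-1)}_{S_j}$ and $S_i$; splitting $[n]\setminus(S_i\cup S_j)$ uniformly into $W\sqcup W^c$, the spread lemma (valid since $\tfrac12\cdot\tfrac r2\ge C\log_2(2k)$) produces, with positive probability, $F_i\in\ff$ with $F_i\subseteq S_i\cup W$ and $F_j\in\ff$ with $F_j\subseteq S_j\cup W^c$, which are disjoint — contradicting that $\ff$ is intersecting.

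\textbf{Finishing with regularity.} Since $\s$ is intersecting, a smallest member $T\in\s$ meets every member of $\s$, so $|T|=:s_0\le q$ and every $F\in\ff\setminus\ff^{(m)}$ meets $T$. By regularity $|\ff(\{i\})|=\mu\binom{n-1}{k-1}=\tfrac kn\mu\binom nk$ for every $i$, whence
\[
\mu\binom nk-|\ff^{(m)}|=|\ff\setminus\ff^{(m)}|\le\sum_{i\in T}|\ff(\{i\})|=\frac{s_0k}{n}\,\mu\binom nk .
\]
Therefore $\mu\bigl(1-\tfrac{s_0k}{n}\bigr)<\eps_0$, and since $s_0k/n\le qk/n\le\tfrac14$ and $\eps_0=\tfrac34\,2^{-q}$, this gives $\mu<2^{-q}$, as required.

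\textbf{Where the difficulty is.} The heart of the argument is the proof that $\s$ is intersecting, and, equally, the tension in the choice of $r$: it must be large enough for the spread lemma (and for the ``truncate to an $(r/2)$-spread family'' step), small enough that $\theta=rk/n<\tfrac12$, and at least $2q$ so that every seed has size $\le r/2$, which is exactly what makes that truncation admissible. Reconciling these constraints is precisely what the two hypotheses $n>2^{13}k\log_2(2k)$ and $n\ge 4qk$ are calibrated for; the rest is bookkeeping with the spread-lemma constant and keeping a little slack in $\eps_0$. I would also stress that regularity is indispensable only in the last step: without it, the covering of $\ff\setminus\ff^{(m)}$ by a star on $s_0$ vertices yields merely $\mu\lesssim s_0k/n$, which says nothing.
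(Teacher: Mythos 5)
Your proof is correct and follows essentially the same route as the paper: it builds the intersecting spread approximation $\s$ by the greedy link-extraction procedure, certifies that $\s$ is intersecting via the random two-colouring plus the Alweiss--Lovett--Wu--Zhang lemma (the paper packages this as Theorem~\ref{thm2} with $\tau$-homogeneity, but it remarks that the $r$-spread formulation you use is equivalent), and then finishes by comparing the degree forced on the elements of one seed set with the degree $\tfrac kn|\ff|$ dictated by regularity. The only differences are cosmetic: you inline the approximation machinery instead of citing it, and you phrase the last step as a direct inequality rather than an average-degree contradiction.
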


Previously, a similar bound was shown by Ellis, Kalai and Narayanan \cite{EKN} for symmetric intersecting families in ${[n]\choose k}$. The best known upper bound for regular families was due to Ihringer and Kupavskii \cite{IK} and stated that any regular intersecting family $\ff$ in ${[n]\choose k}$ satisfies $|\ff|\le {n-r\choose k-r}$ for $n>c(r)k$, i.e., that $\ff$ is polynomially small in $k/n$ for sufficiently large $n$. The lower bound construction, presented in \cite{IK}, implies that there is an $\ff$ as in Theorem~\ref{thmintro3} with roughly $|\ff|\ge q^{-q}{n\choose k}$, which shows that the bound in Theorem~\ref{thmintro3} is not so far from the truth.

Juntas and Junta approximations are an important part of several areas of Mathematics and Computer Science, such as Analysis of Boolean Functions, Property Testing, Hardness of Approximation, Machine Learning etc., see, e.g., \cite{Fish}. Our spread approximation approach provides another way of decomposing Boolean functions. We believe that it can complement existing tools and give new insights into the structure of different classes of Boolean functions.

The rest of the paper is structured as follows. %We briefly justify the relevance of our results to Theoretical Computer Science below.
The next section is devoted to $r$-spread measures and the result of Alweiss, Lowett, Wu, and Zhang \cite{Alw}. In Section~\ref{sec3}, we present the easier variant of the  spread approximation approach that is suitable for dealing with intersecting and $t$-intersecting structures and prove Theorem~\ref{thmintro3}. In Section~\ref{sec4} we present the variant of spread approximations suitable for the Erd\H os--S\'os problem. In Section~\ref{sec5} we give concrete scenarios in which our general results are applicable. One of the applications of our results is an Ahlswerde--Khachatrian type result for designs. In Section~\ref{sec6}, we discuss some other implications of the method and directions for future investigation.

\paragraph{Acknowledgements.} We thank an anonymous referee for helpful comments and for suggesting a simplification of the proof of Lemma \ref{ess}.

\iffalse
\subsection{Connections to Computer Science}
Extremal combinatorics has numerous applications in Computer Science. For a systematic treatment, we refer the reader to the book of Jukna \cite{Juk}. Let us mention some applications of forbidden intersection theorems in TCS. We have already mentioned the result of Dinur and Safra \cite{DS} on the hardness of approximating vertex cover. Frankl and Wilson \cite{FW} used their forbidden intersections result in order to provide  explicit constructions of Ramsey graphs. Razborov and Vereshchagin \cite{RV} used cross-intersecting families in the context of satisfiability of DNF.

Juntas and Junta approximations are an important part of several areas of Computer Science, such as Analysis of Boolean Functions, Property Testing, Hardness of Approximation, Machine Learning etc., see, e.g., \cite{Fish}. Our spread approximation approach provides another way of decomposing Boolean functions. We believe that it can complement existing tools and give new insights into the structure of different classes of Boolean functions. We leave this subject for future investigation.

Sunflowers have numerous applications in Computer Science. For an overview, we refer to the STOC version of the paper \cite{Alw}.
\fi

\section{Spread families}\label{sec2}
Given a probability measure $\mu:2^{[n]}\to [0,1]$, we say that $\mu$ is {\it $r$-spread} for some $r\ge 1$ if $\mu(\{F: X\subset F\})\le r^{-|X|}$ for any $X\subset [n]$. We note that there is a natural measure $\mu_\ff$ that corresponds to a family $\ff\subset 2^{[n]}:$ $\mu_\ff(F) = \frac 1{|\ff|}$ if $F\in \ff$ and $0$ otherwise. We will mostly work with such measures, and will say that the family is $r$-spread if the corresponding measure is $r$-spread. We denote by $|\mu|$ the expected size of a set sampled from $\mu$: $|\mu|:=\sum_{S\subset 2^{[n]}} |S|\mu(S)$. Note that $|\mu|\le n$ for any $\mu$. The {\it support} ${\rm supp}(\mu)$ is the set of all $S\subset [n]$ such that $\mu(S)>0$.

We say that $W$ is a {\it $p$-random subset} of $[n]$ if each element of $[n]$ is included in $W$ with probability $p$ and independently of others.
The following statement is a variant due to Tao \cite{Tao} of the breakthrough result that was proved by Alweiss, Lowett, Wu and Zhang \cite{Alw}.

\begin{thm}[\cite{Alw}, a sharpening due to \cite{Tao}]\label{thmtao}
  If for some $n,r\ge 1$ a measure $\mu: 2^{[n]}\to [0,1]$ is $r$-spread and $W$ is an $(m\delta)$-random subset of $[n]$, then $$\Pr[\exists F\in {\rm supp}(\mu)\ :\ F\subset W]\ge 1-\Big(\frac 5{\log_2(r\delta)} \Big)^m|\mu|.$$
\end{thm}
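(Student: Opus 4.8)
The plan is to prove the estimate by induction on $m$, following the strategy of Alweiss--Lovett--Wu--Zhang in Tao's streamlined form, after two quick reductions. Write $\gamma:=\tfrac{5}{\log_2(r\delta)}$. We may assume $\gamma<1$, i.e. $r\delta>32$: if instead $\gamma\ge1$, then either $|\mu|\ge1$ and the bound is vacuous (as $\gamma^m|\mu|\ge1$), or $|\mu|<1$, in which case Markov's inequality gives $\mu(\{\emptyset\})\ge1-|\mu|>0$, so $\emptyset\in\mathrm{supp}(\mu)$ and the probability equals $1$. We may also assume $m\delta\le1$, since otherwise $W=[n]$ almost surely and there is nothing to prove. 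Now let $W_1,\dots,W_m$ be independent $\delta$-random subsets of $[n]$; since $1-(1-\delta)^m\le m\delta$, there is a coupling with an $(m\delta)$-random set $W$ for which $W_1\cup\dots\cup W_m\subset W$ always, and as the event ``$W$ contains some $F\in\mathrm{supp}(\mu)$'' is monotone, it suffices to prove that
$$q_m(\mu):=\Pr\big[\,\nexists F\in\mathrm{supp}(\mu):\ F\subset W_1\cup\dots\cup W_m\,\big]\ \le\ \gamma^m|\mu|,$$
uniformly over all $n$ and all $r$-spread $\mu$ on $2^{[n]}$.

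The base case $m=0$ is immediate: the union is empty, so $q_0(\mu)=0$ when $\emptyset\in\mathrm{supp}(\mu)$, and otherwise $q_0(\mu)=1\le\sum_S|S|\mu(S)=|\mu|$. For the inductive step I would condition on $W_1=U$ and replace $(\mu,m)$ by a smaller instance; the heart of the argument is a \emph{refinement lemma}. It asserts that for each outcome $U$ there is a ``core'' $S_U\subset U$ with $\mu(\{F:S_U\subset F\})>0$ such that the \emph{link} $\nu_U$ of $\mu$ at $S_U$ — namely $\mu$ conditioned on $\{F:S_U\subset F\}$, restricted to $2^{[n]\setminus S_U}$ by deleting $S_U$ — has the following properties:
\begin{itemize}
  \item[(a)] $\nu_U$ is again $r$-spread;
  \item[(b)] $\mathbb{E}_U\,|\nu_U|\ \le\ \gamma\,|\mu|$.
\end{itemize}
Here (a) is a genuine constraint on $S_U$ (not every core works), but $S_U=\emptyset$ always qualifies, so the content of the lemma is that one can take $S_U\subset U$ large enough for (b) to hold on average.

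Granting the refinement lemma, the induction runs smoothly. Every $F'\in\mathrm{supp}(\nu_U)$ equals $F\setminus S_U$ for some $F\in\mathrm{supp}(\mu)$ with $S_U\subset F$, whence $F=F'\cup S_U\subset F'\cup U$. Setting $V':=W_2\cup\dots\cup W_m$ (a union of $m-1$ independent $\delta$-random sets, independent of $W_1$), we see that if no $F\in\mathrm{supp}(\mu)$ lies in $U\cup V'$ then no $F'\in\mathrm{supp}(\nu_U)$ lies in $V'$; since $\nu_U$ is $r$-spread on $[n]\setminus S_U$ and $V'\cap([n]\setminus S_U)$ is again a union of $m-1$ independent $\delta$-random subsets of $[n]\setminus S_U$, the induction hypothesis gives
$$\Pr\big[\,\nexists F'\in\mathrm{supp}(\nu_U):\ F'\subset V'\ \big|\ W_1=U\,\big]\ \le\ \gamma^{m-1}|\nu_U|.$$
Averaging over $U$ and invoking (b),
$$q_m(\mu)\ \le\ \mathbb{E}_U\big[\gamma^{m-1}|\nu_U|\big]\ =\ \gamma^{m-1}\,\mathbb{E}_U|\nu_U|\ \le\ \gamma^{m-1}\cdot\gamma|\mu|\ =\ \gamma^m|\mu|,$$
which closes the induction.

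The hard part — indeed essentially the whole technical content of the theorem — is the refinement lemma. I would build $S_U$ by a greedy process starting from $\emptyset$ that keeps enlarging the core inside $U$ as long as doing so preserves $r$-spreadness of the link; this is where the randomness of $U$ enters, since a large random $U$ is likely to contain the elements one is forced to absorb, and with this choice (a) holds by construction. The real work is the quantitative estimate (b): one must track how deleting the random core $S_U$ shrinks the expected set size while conditioning on $\{F\supset S_U\}$ reweights it, and show that the net effect is multiplication by $\gamma$. This is exactly the point where $r$-spreadness is indispensable — it bounds the reweighting, and together with the sparsity parameter $\delta$ it bounds the expected number and total weight of elements drawn into the core — and balancing these two opposing effects, with the slack swallowed by the constant $5$, is the one genuinely delicate estimate; I would import it from \cite{Alw,Tao} rather than redo it.
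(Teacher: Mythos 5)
First, a point of reference: the paper does not prove Theorem~\ref{thmtao} at all --- it is imported verbatim as a black box from \cite{Alw} (in the sharpened form from \cite{Tao}), so there is no internal proof to compare yours against. Measured against that, your proposal actually does more than the paper: the outer shell you describe is the standard and correct one. The reductions (dismissing $\gamma\ge 1$ via Markov applied to $|\mu|$, assuming $m\delta\le 1$, and replacing the $(m\delta)$-random set by a union of $m$ independent $\delta$-random sets via $1-(1-\delta)^m\le m\delta$ and monotonicity) are all sound --- indeed the authors left a commented-out variant of the theorem stated for $(1-(1-\delta)^m)$-random sets with exactly this remark. The induction on $m$, the passage to the link $\nu_U$ at a core $S_U\subset U$, and the bookkeeping $q_m(\mu)\le\gamma^{m-1}\,\E_U|\nu_U|\le\gamma^m|\mu|$ are likewise correct as stated, granting your refinement lemma.

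The gap, as you candidly acknowledge, is that the refinement lemma \emph{is} the theorem: producing $S_U\subset U$ for which the link is still $r$-spread \emph{and} $\E_U|\nu_U|\le\tfrac{5}{\log_2(r\delta)}|\mu|$ is where every known proof (the iterative compression of \cite{Alw}, Rao's encoding argument, Tao's entropy argument) spends all of its effort, and the greedy description you give does not by itself yield the quantitative bound (b) or even make clear why the spreadness of the link survives the conditioning and reweighting simultaneously with the size decrease. So as a self-contained proof the proposal is incomplete at its only hard step; as a citation-plus-skeleton it is consistent with how the paper itself uses the result. One small additional caveat: your case analysis ``$\gamma\ge 1$'' covers $1<r\delta\le 32$ but not $r\delta\le 1$, where $\log_2(r\delta)\le 0$ and the stated bound is degenerate; like the original sources, you should simply assume $r\delta$ large enough for $\gamma\in(0,1)$ at the outset.
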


This theorem has an important implication for the classical problem of Erd\H os and Rado \cite{ER} on sunflowers. Recall that an {\it $\ell$-sunflower} is a collection of $\ell$ sets $F_1,\ldots, F_\ell$ such that for any $i\ne j$ we have $F_i\cap F_j = \cap_{i=1}^\ell F_i$. (In particular, $\ell$ pairwise disjoint sets form an $\ell$-sunflower.) The set $\cap F_i$ is called the {\it core}.  Erd\H os and Rado showed that if $\ff$ is a family of $k$-element sets such that  $|\ff|> k!(\ell-1)^{k}$, then it contains an $\ell$-sunflower. They conjectured that the same holds if $|\ff|>C^k$ for some $C  =C(\ell).$ Theorem~\ref{thmtao} almost immediately gives the first major improvement of the bound due to  Erd\H os and Rado, implying that any family $\ff$ of $k$-sets with \begin{equation}\label{eqalweiss}|\ff|> \big(C\ell\log_2 (k\ell)\big)^k\end{equation} contains an $\ell$-sunflower, where $C$ is an absolute constant and can be taken to be $2^{10}$. The idea is as follows. First, given a large family $\ff$, restrict to a ``subfamily'' of the form $$\ff(S):=\{A\setminus S: A\in\ff, S\subset A\}$$
that is sufficiently spread. Second, given a spread family $\ff(S)$, randomly color the set $[n]\setminus S$ into $\ell$ colors. Applying Theorem~\ref{thmtao}, with positive probability, we simultaneously have a monochromatic set from $\ff(S)$ in each of the colors. Together, they form an $\ell$-matching in $\ff(S)$ and thus an $\ell$-sunflower with core $S$ in $\ff$. We note that a more efficient version of this argument due to \cite{BCW} yields a slightly better bound in (\ref{eqalweiss}). For clarity of exposition in what follows, we define $\ff[S]$ to be
$$\ff[S]:=\{A: A\in\ff, S\subset A\}.$$
Note the difference between $\ff[S]$ and $\ff(S)$. In particular, we have $\ff[S]\subset \ff,$ which is generally not true for $\ff(S).$

Theorem~\ref{thmtao} gives something more robust than just a sunflower (actually, the authors of \cite{Alw} use terminology {\it robust sunflowers}): with large probability it guarantees sunflowers that `respect' a random partition. Talagrand \cite{Tal} observed using duality that the property of a family being $r$-spread is roughly equivalent to having a fractional cover that has small $r^{-1}$-biased measure. Frankston, Kahn, Narayanan and Park \cite{FKNP} used this and a slight modification of the proof from \cite{Alw} to show a general result that relates thresholds and fractional expectation-thresholds for monotone properties. \\

\iffalse
\begin{thm}\label{thmtao}
  If for some $n,r\ge 1$ a measure $\mu: 2^{[n]}\to [0,1]$ is $r$-spread and $W$ is a $(1-(1-\delta)^m)$-random subset of $[n]$, then $$\Pr[\exists F\in {\rm supp}(\mu)\ :\ F\subset W]\ge 1-\Big(\frac 5{\log_2(r\delta)} \Big)^m|\mu|.$$
\end{thm}
Note that $1-(1-\delta)^m\le m\delta$, and thus the same conclusion holds for an $(m\delta)$-random set.
\fi

For us, it will be handy to work with the following modified version of spreadness, which we define only for (unweighted) uniform families. For $\tau> 1$ we say that a family $\ff\subset {[n]\choose k}$ is {\it $\tau$-homogeneous} if for any $A$ of size $a\le k$ we have $|\ff(A)|\le \tau^a \frac{{n-a\choose k-a}}{{n\choose k}}|\ff|$. Note that, by simple averaging, for any family $\ff\subset {[n]\choose k}$ there always exists $A$ of size $a$ such that $|\ff(A)|\ge \frac{{n-a\choose k-a}}{{n\choose k}}|\ff|$. Thus, if we ask for the $1$-homogeneous property, then each $\ell$-set should be contained in the same number of sets from the family, and this should be true for each $\ell\le k$. It should be clear that this is only possible if $\ff = {[n]\choose k}$.
\begin{obs}\label{obs1}
If $\ff\subset {[n]\choose k}$ is $\tau$-homogeneous then it is $\frac {n}{\tau k}$-spread.
\end{obs}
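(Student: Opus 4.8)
The plan is to unravel the two definitions and compare them directly. Suppose $\ff \subseteq \binom{[n]}{k}$ is $\tau$-homogenous and let $\mu = \mu_\ff$ be the associated uniform measure; I want to show $\mu$ is $\frac{n}{\tau k}$-spread, i.e. that for every $X \subseteq [n]$ we have $\mu(\{F : X \subseteq F\}) \le \big(\frac{\tau k}{n}\big)^{|X|}$.

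First I would dispose of the trivial case: if $|X| = a > k$ then no $F \in \ff$ contains $X$, so the left-hand side is $0$ and there is nothing to prove. So assume $a := |X| \le k$. Then by definition of $\mu_\ff$, the quantity $\mu(\{F : X \subseteq F\})$ equals $|\ff(X)|/|\ff|$ (here $\ff(X) = \{F \setminus X : F \in \ff,\ X \subseteq F\}$, so $|\ff(X)|$ counts exactly the members of $\ff$ containing $X$). The $\tau$-homogeneity hypothesis applied to the set $X$ gives
\[
|\ff(X)| \le \tau^{a}\,\frac{\binom{n-a}{k-a}}{\binom{n}{k}}\,|\ff|,
\]
hence $\mu(\{F : X \subseteq F\}) \le \tau^{a}\,\frac{\binom{n-a}{k-a}}{\binom{n}{k}}$.

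It then remains to bound the binomial ratio: I claim $\frac{\binom{n-a}{k-a}}{\binom{n}{k}} \le \big(\frac{k}{n}\big)^{a}$. This is the only piece of real content, and it is a short computation — $\frac{\binom{n-a}{k-a}}{\binom{n}{k}} = \prod_{i=0}^{a-1} \frac{k-i}{n-i}$, and each factor $\frac{k-i}{n-i}$ is at most $\frac{k}{n}$ since $k \le n$ (the map $x \mapsto \frac{k-x}{n-x}$ is nonincreasing for $x < k$ when $k \le n$). Multiplying the two bounds gives $\mu(\{F : X \subseteq F\}) \le \tau^{a} (k/n)^{a} = (\tau k/n)^{-a\cdot(-1)}$; more precisely it gives $\big(\frac{\tau k}{n}\big)^{a} = \big(\frac{n}{\tau k}\big)^{-a} = \big(\frac{n}{\tau k}\big)^{-|X|}$, which is exactly the $\frac{n}{\tau k}$-spread condition. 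There is no real obstacle here; the "hard part," such as it is, is just making sure the factor-by-factor binomial estimate is stated cleanly (and noting it needs $k \le n$, which holds since $\ff \subseteq \binom{[n]}{k}$ is nonempty, the empty-family case being vacuous).
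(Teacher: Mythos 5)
Your proof is correct and follows exactly the paper's route: apply the $\tau$-homogeneity bound to $|\ff(X)|$ and then estimate $\binom{n-a}{k-a}/\binom{n}{k}=\prod_{i=0}^{a-1}\frac{k-i}{n-i}\le (k/n)^a$. The only difference is that you spell out the binomial-ratio computation and the trivial case $|X|>k$, which the paper leaves implicit.
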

\begin{proof}
For any set $A\subset [n]$ of size $a$ we have $|\ff(A)|\le \tau^a \frac{{n-a\choose k-a}}{{n\choose k}}|\ff|\le \tau^a \big(\frac kn\big)^a|\ff|,$ showing that $\ff$ is $\frac{n}{\tau k}$-spread.
\end{proof}
%\textcolor{red}{Decide what to do with a family that consists of emptyset only. Probably, it's spread, so need to remove superficial restrictions}
\begin{obs}\label{obs2} Given $\tau>1$ and a family $\ff\subset {[n]\choose k}$, let $X$ be a maximal set that satisfies $|\ff(X)|\ge\tau^{|X|} \frac{{n-|X|\choose k-|X|}}{{n\choose k}}|\ff|$. Then $\ff(X)$ is $\tau$-homogeneous as a family in ${[n]\setminus X\choose k-|X|}$.
\end{obs}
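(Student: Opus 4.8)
The plan is to unwind the definition of $\tau$-homogeneity directly, turning it into a statement about the link families $\ff(X\cup A)$, and then to play the maximality of $X$ against the inequality that defines $X$. Write $x=|X|$, $n'=n-x$, $k'=k-x$, so that $\ff(X)$, after relabelling, is a family in $\binom{[n']}{k'}$; we may assume $x\le k$, since otherwise $\ff(X)=\varnothing$ and the $\tau$-homogeneity condition is vacuous. The first step is a bookkeeping identity: for any $A\subseteq[n]\setminus X$ one has $(\ff(X))(A)=\ff(X\cup A)$, because a set $F\in\ff$ with $X\cup A\subseteq F$ is uniquely recovered from $F\setminus(X\cup A)$; in particular $|(\ff(X))(A)|=|\ff(X\cup A)|$. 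So what must be shown is
$$|\ff(X\cup A)|\ \le\ \tau^{a}\,\frac{\binom{n'-a}{k'-a}}{\binom{n'}{k'}}\,|\ff(X)|\qquad\text{for every }A\subseteq[n]\setminus X,\ |A|=a\le k'.$$

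For $a=0$ this is an equality, so fix $a\ge 1$. The key point is that $X\cup A$ strictly contains $X$ and has size $x+a\le k$, hence by the maximality of $X$ it fails the inequality defining $X$:
$$|\ff(X\cup A)|\ <\ \tau^{\,x+a}\,\frac{\binom{n-x-a}{k-x-a}}{\binom{n}{k}}\,|\ff|\ =\ \tau^{\,x+a}\,\frac{\binom{n'-a}{k'-a}}{\binom{n}{k}}\,|\ff|.$$
On the other hand, the defining inequality for $X$ itself rearranges to $\dfrac{|\ff|}{\binom{n}{k}}\le\dfrac{|\ff(X)|}{\tau^{x}\binom{n'}{k'}}$. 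Substituting this into the previous display, the factor $\tau^{x}$ cancels and we get $|\ff(X\cup A)|<\tau^{a}\frac{\binom{n'-a}{k'-a}}{\binom{n'}{k'}}|\ff(X)|$, which is the required bound (in fact strict). Combining with the cardinality identity completes the argument.

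There is no genuinely hard step here — the proof is just two inequalities chained together — so the only thing that needs care is the bookkeeping: verifying the identity $(\ff(X))(A)=\ff(X\cup A)$, and making sure that when the maximality of $X$ is invoked the set $X\cup A$ is eligible, namely that it strictly contains $X$ (true since $A\subseteq[n]\setminus X$ is non-empty) and has size at most $k$ (true since $a\le k'$). The boundary values $x\in\{k-1,k\}$ are harmless: there the $\tau$-homogeneity condition involves only $a\le 1$ and is trivial or an equality.
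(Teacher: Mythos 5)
Your proof is correct and is essentially the paper's own argument: identify $(\ff(X))(A)$ with $\ff(X\cup A)$, use maximality of $X$ to bound $|\ff(X\cup A)|$ from above by $\tau^{|X|+a}\frac{\binom{n-|X|-a}{k-|X|-a}}{\binom{n}{k}}|\ff|$, and then substitute the defining lower bound on $|\ff(X)|$ to eliminate $|\ff|$. The extra care you take with the $a=0$ case and the eligibility of $X\cup A$ is fine but not substantively different from the paper's one-line computation.
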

\begin{proof}
Indeed, for any $B\supsetneq X$ of size $b$ we have
$$|\ff(B)|\le \tau^b \frac{{n-b\choose k-b}}{{n\choose k}}|\ff|\le\tau^{b-|X|} \frac{{n-b\choose k-b}}{{n-|X|\choose k-|X|}}|\ff(X)|.$$
\end{proof}

An important new notion that we introduce is  a {\it relative} notion of homogeneity. In particular, it is central in the proof of our results for families of permutations. Fix an arbitrary family of sets $\mathcal A$ and a real number $\tau > 1$. A subfamily $\ff \subset \aaa$ is called {\it $(\aaa, \tau)$-homogeneous} (or {\it $\tau$-homogeneous relative to $\aaa$}) if for any set $S$ we have
$$
\frac{|\ff(S)|}{|\ff|} \le \tau^{|S|}\frac{|\aaa(S)|}{|\aaa|}.
$$
That is, if a family $\ff\subset {[n]\choose k}$ is $\tau$-homogeneous, then it is $\tau$-homogeneous relative to  ${[n]\choose k}.$ The following observation is virtually identical to Observation~\ref{obs2}, but we present the proof for clarity.
\begin{obs}\label{obs3} Given a family $\aaa$, $\tau>1$ and a family $\ff\subset \aaa$, let $X$ be a maximal set that satisfies $|\ff(X)|\ge\tau^{|X|} \frac{|\aaa(X)|}{|\aaa|}|\ff|$. Then $\ff(X)$ is $(\aaa(X),\tau)$-homogeneous.
\end{obs}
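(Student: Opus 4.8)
The plan is to mimic the proof of Observation~\ref{obs2} almost verbatim, with the relative ratio $|\aaa(\cdot)|/|\aaa|$ playing the role that ${n-\cdot\choose k-\cdot}/{n\choose k}$ played there.

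First I would unwind the conclusion: to say that $\ff(X)$ is $(\aaa(X),\tau)$-homogeneous is to say that for every set $S$ one has $|\ff(X)(S)|\le\tau^{|S|}\frac{|\aaa(X)(S)|}{|\aaa(X)|}|\ff(X)|$. The main bookkeeping point is the identity $\ff(X)(S)=\ff(X\cup S)$ whenever $S\cap X=\emptyset$ (and $\ff(X)(S)=\emptyset$ otherwise, since every member of $\ff(X)$ is disjoint from $X$), and likewise $\aaa(X)(S)=\aaa(X\cup S)$. So, after disposing of the trivial cases in which $S$ meets $X$, or $S=\emptyset$, or $\ff(X)(S)=\emptyset$, it suffices to prove
$$|\ff(X\cup S)|\le\tau^{|S|}\frac{|\aaa(X\cup S)|}{|\aaa(X)|}|\ff(X)|$$
for every nonempty $S$ disjoint from $X$.

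Second, I would invoke the two properties of $X$. Since $X\cup S\supsetneq X$, maximality of $X$ forces $X\cup S$ to fail the defining inequality, i.e. $|\ff(X\cup S)|<\tau^{|X\cup S|}\frac{|\aaa(X\cup S)|}{|\aaa|}|\ff|$. At the same time $X$ itself satisfies it: $|\ff(X)|\ge\tau^{|X|}\frac{|\aaa(X)|}{|\aaa|}|\ff|$, which I rewrite as $\frac{|\ff|}{|\aaa|}\le\frac{|\ff(X)|}{\tau^{|X|}|\aaa(X)|}$ (legitimate since $|\aaa(X)|\ge|\ff(X)|>0$ in the non-trivial case, because $\ff(X)\subseteq\aaa(X)$). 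Plugging the second bound into the first makes $\tau^{|X\cup S|}=\tau^{|X|}\tau^{|S|}$ and cancels the factor $|\aaa(X)|$, leaving exactly the displayed inequality (in fact strictly).

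There is no real obstacle here — the paper itself flags this as ``virtually identical'' to Observation~\ref{obs2}. The only things to be a little careful about are the restriction identities $\ff(X)(S)=\ff(X\cup S)$ together with the degenerate possibilities that one side is empty, and ensuring that the denominators one divides by are positive; this is exactly why I would clear the cases $S\cap X\ne\emptyset$, $S=\emptyset$ and $\ff(X)(S)=\emptyset$ before running the two-line computation.
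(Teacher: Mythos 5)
Your proposal is correct and is essentially the paper's own argument: for $B=X\cup S\supsetneq X$ you combine the failure of the defining inequality at $B$ (by maximality of $X$) with its validity at $X$ to get $|\ff(B)|\le\tau^{|B|-|X|}\frac{|\aaa(B)|}{|\aaa(X)|}|\ff(X)|$, which is exactly the displayed chain in the paper's proof. The extra care you take with the identifications $\ff(X)(S)=\ff(X\cup S)$ and the degenerate cases is fine but does not change the argument.
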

\begin{proof}
Indeed, for any $B\supsetneq X$ of size $b$ we have
$$|\ff(B)|\le \tau^b \frac{|\aaa(B)|}{|\aaa|}|\ff|\le\tau^{b-|X|} \frac{|\aaa(B)|}{|\aaa(X)|}|\ff(X)|.$$
\end{proof}

\section{Spread approximations for Ahlswede--Khachatrian}\label{sec3}
In this section, we present a simpler version of our spread approximation method, which also requires the weakest assumptions on the parameters.
We first deal with the easiest and most classical case of  $t$-intersecting families of sets in ${[n]\choose k}$. It easily extends to several cross-dependent families, but, in order to avoid technicalities, we postpone this  more general statement to Section~\ref{sec6}.

\begin{thm}\label{thm2}
  Let $n,k\ge 2$, $t\ge1$ be some integers. Consider a family $\ff\subset {[n]\choose k}$ that is  $t$-intersecting. Let $q, \tau \ge 1$ satisfy the following restrictions: $n> 2^{12}\tau k\log_2(2k)$, $n\ge 2q\tau k$ and $q \ge t$. Then there exist a $t$-intersecting  family $\mathcal S\subset {[n]\choose \le q}$ of sets of size at most $q$ and a family $\ff'$ such that the following holds.
  \begin{itemize}
    \item[(i)] For all $A\in \ff\setminus \ff'$ there is $B\in\s$ such that $B\subset A$;
    \item[(ii)] for any $B\in \s$ there is a family $\ff_B\subset \ff$ such that $\ff_B(B)$ is $\tau$-homogeneous;
    \item[(iii)] $|\ff'|\le \tau^{-q-1}{n\choose k}$.
  \end{itemize}
\end{thm}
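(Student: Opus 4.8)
The plan is to build $\s$ and $\ff'$ greedily by iteratively extracting spread (homogeneous) pieces from $\ff$. Start with $\ff_0 = \ff$. At step $i$, if $|\ff_{i-1}|$ is already at most $\tau^{-q-1}\binom{n}{k}$, stop and set $\ff' = \ff_{i-1}$. Otherwise, apply Observation~\ref{obs2}: let $X_i$ be a maximal set with $|\ff_{i-1}(X_i)| \ge \tau^{|X_i|}\frac{\binom{n-|X_i|}{k-|X_i|}}{\binom nk}|\ff_{i-1}|$ (this set is nonempty only if $\ff_{i-1}$ itself fails to be $\tau$-homogeneous; if $\ff_{i-1}$ is already $\tau$-homogeneous take $X_i=\emptyset$, which by Observation~\ref{obs1} makes $\ff_{i-1}$ itself $\frac{n}{\tau k}$-spread and we can terminate). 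Add $X_i$ to $\s$, record $\ff_{X_i} := \ff_{i-1}$ (so that $\ff_{X_i}(X_i)$ is $\tau$-homogeneous by Observation~\ref{obs2}), and pass to $\ff_i := \ff_{i-1}\setminus\{A\in\ff_{i-1}: X_i\subset A\}$. Thus every set removed from $\ff$ contains some $B\in\s$, giving (i), and (ii) holds by construction. Since each $\ff_i\subsetneq\ff_{i-1}$ is strictly smaller, the process terminates, and at termination $|\ff'|\le \tau^{-q-1}\binom nk$, which is (iii).

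The two things that still need to be verified are that every $B\in\s$ has $|B|\le q$ and that $\s$ is $t$-intersecting. For the size bound: by construction $|\ff_{X_i}(X_i)|\ge \tau^{|X_i|}\frac{\binom{n-|X_i|}{k-|X_i|}}{\binom nk}|\ff_{X_i}|$, and since $|\ff_{X_i}|$ exceeds $\tau^{-q-1}\binom nk$ (otherwise we would have stopped), while $|\ff_{X_i}(X_i)|\le \binom{n-|X_i|}{k-|X_i|}$ trivially, combining gives $\tau^{|X_i|}\le \tau^{q+1}\cdot\frac{\binom nk}{\binom{n-|X_i|}{k-|X_i|}}\cdot\frac{\binom{n-|X_i|}{k-|X_i|}}{\binom nk} = \tau^{q+1}$ — wait, that only yields $|X_i|\le q+1$, so a slightly more careful estimate using $n\ge 2q\tau k$ is needed to shave it to $q$; here one uses that $\frac{\binom{n-a}{k-a}}{\binom nk}\le (k/n)^a$ is not tight enough and instead bounds $|\ff_{X_i}(X_i)|$ against $|\ff_{X_i}|$ directly, or observes that a set of size $>q$ contained in a $k$-set with the required multiplicity forces $|\ff_{X_i}|$ below the stopping threshold given the separation $n\ge 2q\tau k$. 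For $t$-intersection of $\s$: this is where Theorem~\ref{thmtao} enters. Suppose $B, B'\in\s$ with $|B\cap B'|\le t-1$. The associated homogeneous families $\ff_B(B)$ and $\ff_{B'}(B')$ are, by Observation~\ref{obs1}, $\frac{n}{\tau k}$-spread in ${[n]\setminus B\choose k-|B|}$ and ${[n]\setminus B'\choose k-|B'|}$ respectively; the hypothesis $n>2^{12}\tau k\log_2(2k)$ makes $r=\frac{n}{\tau k}$ large enough that Theorem~\ref{thmtao}, applied with a random subset of appropriate density on $[n]\setminus(B\cup B')$, produces with positive probability disjoint sets $A\setminus B\in \ff_B(B)$ and $A'\setminus B'\in\ff_{B'}(B')$ that also avoid $B'\setminus B$ and $B\setminus B'$ respectively. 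Then $A\in\ff_B\subset\ff$, $A'\in\ff_{B'}\subset\ff$, and $A\cap A'\subseteq B\cap B'$, so $|A\cap A'|\le t-1$, contradicting that $\ff$ is $t$-intersecting.

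The main obstacle is the last step: making the probabilistic argument for $t$-intersection of $\s$ go through with the stated parameters. One must apply Theorem~\ref{thmtao} to find, simultaneously, a small (size $\le q$) random witness set $W$ on the ground set minus $B\cup B'$ that contains a member of each of the two spread families while the two chosen members land in disjoint coordinates — which requires either splitting $W$ into two independent parts or running the theorem once per family on disjoint random sets and controlling the error terms $\big(\tfrac{5}{\log_2(r\delta)}\big)^m|\mu|$ below $1$. The bookkeeping of $m$, $\delta$ and $|\mu|\le k$ against the quantitative hypotheses $n>2^{12}\tau k\log_2(2k)$ and $n\ge 2q\tau k$ is the delicate part; the greedy decomposition itself is routine. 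It may also be necessary to treat the two families on a common random set of size about $2q$ so that the disjointness of the two witnesses is automatic, which is why the hypothesis carries the factor $2q$ rather than $q$.
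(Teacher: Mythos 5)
Your overall strategy is the same as the paper's: greedily peel off $\tau$-homogenous pieces $\ff^i(S_i)$ via Observation~\ref{obs2}, and rule out $|S_i\cap S_j|<t$ by a random $2$-colouring argument based on Theorem~\ref{thmtao}. Two points, though. First, the stopping rule. The paper stops the iteration as soon as the maximal set $S_m$ found has size exceeding $q$ (or $\ff^m=\emptyset$), and sets $\ff'=\ff^m$; then $|\ff'|\le \tau^{-|S_m|}\frac{{n\choose k}}{{n-|S_m|\choose k-|S_m|}}|\ff^m(S_m)|\le \tau^{-|S_m|}{n\choose k}\le\tau^{-q-1}{n\choose k}$ falls out immediately, and no set of size $>q$ is ever placed in $\s$, so the bound $|B|\le q$ needs no proof at all. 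Your size-threshold stopping rule creates exactly the complication you then fail to close; for the record, your own computation with strict inequalities ($|\ff_{i-1}|>\tau^{-q-1}{n\choose k}$ gives $\tau^{|X_i|-q-1}<1$, hence $|X_i|\le q$ for $\tau>1$) does work, but the cleaner fix is to adopt the paper's stopping rule.

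The genuine gap is the step you yourself defer as ``delicate bookkeeping'': the quantitative execution of the $t$-intersection argument, which is the heart of the theorem and is precisely where both numerical hypotheses are consumed. Concretely, with $\g_\ell:=\ff_{S_\ell}(S_\ell)$, one first passes to $\g_\ell':=\g_\ell(\bar S_{3-\ell})$, the sets of $\g_\ell$ disjoint from the other core. By Observation~\ref{obs1} each $\g_\ell$ is $\frac{n}{\tau k}$-spread, so $|\g_\ell(\{x\})|\le\frac{\tau k}{n}|\g_\ell|$ for each of the at most $q$ elements of $S_{3-\ell}\setminus S_\ell$, whence $|\g_\ell'|\ge(1-\frac{q\tau k}{n})|\g_\ell|\ge\frac12|\g_\ell|$ --- this is the sole purpose of the hypothesis $n\ge 2q\tau k$, not a ``common random set of size about $2q$'' as you speculate. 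The restricted families are then $\frac{n}{2\tau k}$-spread, i.e.\ $r$-spread with $r>2^{11}\log_2(2k)$ by the first hypothesis on $n$. Now apply Theorem~\ref{thmtao} with $m=\log_2(2k)$ and $\delta=(2\log_2(2k))^{-1}$, so that $m\delta=\frac12$ and $r\delta>2^{10}$: a $\frac12$-random subset of $[n]\setminus(S_1\cup S_2)$ contains a member of $\g_\ell'$ with probability strictly greater than $1-\bigl(\tfrac{5}{10}\bigr)^{\log_2(2k)}k=\frac12$. A uniformly random partition of $[n]\setminus(S_1\cup S_2)$ into two parts, each distributed as such a random subset, plus a union bound, then produces disjoint $F_1\in\g_1'$, $F_2\in\g_2'$ with $F_\ell\cap(S_1\cup S_2)=\emptyset$, so $|(F_1\cup S_1)\cap(F_2\cup S_2)|=|S_1\cap S_2|<t$, the desired contradiction. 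No splitting of a single witness set, and no per-family error control beyond this union bound, is needed. Without this restriction step and these parameter choices the argument does not compile into a proof, so as written the proposal is incomplete at its decisive point.
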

%Note that, in principle, we can take $q \ge k$, but in this case we can simply put $\s = \ff$
%Tis theorem gives interesting results for $n>Cks\log(k)$ with some explicit $C$.

\begin{proof}[Proof of Theorem~\ref{thm2}] The first guiding observation to make is that $\ff$ cannot be $\tau$-homogeneous. Indeed, arguing indirectly, randomly color $[n]$ into two colors. It is not difficult to verify that, under our assumptions, Observation~\ref{obs1} and Theorem~\ref{thmtao} imply that, with positive probability, we have a set from $\ff$ in each of the colors. But these sets are disjoint. We avoid calculations here, since we do them below in a more general scenario. This motivates the following procedure for $i=1,\ldots $ with $\ff^1:=\ff$.
\begin{enumerate}
    \item Find a maximal $S_i$ that  $|\ff^i(S_i)|\ge  \tau^{|S_i|} \frac{{n-|S_i|\choose k-|S_i|}}{{n\choose k}} |\ff^i|$.
    \item If $|S_i|> q$ or $\ff^i = \emptyset$ then stop. Otherwise, put $\ff^{i+1}:=\ff^i\setminus \ff^i[S_i]$.
\end{enumerate}

Note that $\ff^i(S_i)$ is $\tau$-homogeneous by Observation~\ref{obs2}.

Let $m$ be the step of the procedure for $\ff$ at which we stop. The family $\s$ is defined as follows: $\s:=\{S_1,\ldots, S_{m-1}\}$. Clearly, $|S_i|\le q$ for each $i\in [m-1]$. The family $\ff_{B}$ promised in (ii) is defined to be $\ff^i[S_i]$ for $B=S_i$. Next, note that if $\ff^m$ is non-empty, then $$|\ff^m|\le \tau^{-|S_m|} \frac{{n\choose k}}{{n-|S_m|\choose k-|S_m|}} |\ff^{m}(S_m)|\le  \tau^{-|S_m|} \frac{{n\choose k}}{{n-|S_m|\choose k-|S_m|}}{n-|S_m|\choose k-|S_m|} = \tau^{-|S_m|}{n\choose k}.$$
We put $\ff':=\ff^m$. Since either $|S_m|>q$ or $\ff' = \emptyset $, we have $|\ff'|\le \tau^{-q-1}{n\choose k}$.

We have verified all the properties required from $\ff$, $\ff'$ and $\s$, except for the following crucial property.
\begin{lem}\label{lemtint} The family $\mathcal S$ is $t$-intersecting.
\end{lem}
\begin{proof}

  Take any (not necessarily distinct) $S_i,S_j\in \mathcal S$  and assume that $|S_i\cap S_j|<t$. Recall that  $\g_i:=\ff^i(S_i)$ and $\g_j:=\ff^j(S_j)$ are both $\tau$-homogeneous. For a set $X\subset [n]$ and a family $\g\subset 2^{[n]}$ we use the following standard notation:
  $$\g(\bar X):=\{G\in \g: G\cap X = \emptyset\}.$$
  We think of $\g(\bar X)$ as of a subfamily of $2^{[n]\setminus X}.$ By Observation~\ref{obs1}, $\g_i$ and $\g_j$ are $\big(\frac {n}{\tau k}\big)$-spread. We use this in the second inequality below.
  $$|\g_j(\bar S_i)| \ge |\g_j|-\sum_{x\in S_i\setminus S_j} |\g_j[\{x\}]|\ge \Big(1-\frac {\tau k|S_i|}{n}\Big) |\g_j|\ge \frac 12|\g_j|.$$
  In the last inequality we used that $|S_i|\le q$ and that $n\ge 2\tau kq.$ The same is valid for $\g_i(\bar S_j)$. Note that both $\g_j':=\g_j(\bar S_i)$ and $\g_i':=\g_i(\bar S_j)$ are subfamilies of $2^{[n]\setminus (S_i\cup S_j)}.$ Because of the last displayed inequality and the trivial inclusion $\g_j'(Y)\subset \g_j(Y)$, valid for any $Y$, both $\g_i'$ and $\g_j'$ are $\big(\frac{n}{2\tau k}\big)$-spread. Thus, by the first assumption on $n$ in Theorem~\ref{thm2} $\g_i'$ and $\g_j'$ are $r$-spread with some $r > 2^{11}\log_2(2k)$.  % Here we used that $\g_j\subset 2^{[n]\setminus X_j}$ in the first inequality, that $\g_j$ is $r$-spread in the second inequality and, finally, that $r\ge 2(s-1)q$ in the last inequality. The displayed inequality implies that $\g_j':=\g_j(\bar X)$ is $\frac r2$-spread because of the

  We are about to apply Theorem~\ref{thmtao}. Let us put $m= \log_2(2k)$ and $\delta = (2\log_2(2k))^{-1}$. Note that $m\delta = \frac 12$ and $r\delta > 2^{10}$ by our choice of $r$.  Theorem~\ref{thmtao} implies that a $\frac{1}{2}$-random subset $W$ of $[n]\setminus (S_i\cup S_j)$ contains a set from $\g_j'$ with probability strictly bigger than
  $$1-\Big(\frac 5{\log_2 2^{10}}\Big)^{\log_2 2k} k = 1-2^{-\log_2 2k} k = \frac 12.$$

  Consider a random partition of $[n]\setminus (S_i\cup S_j)$ into $2$ parts $U_i,U_j$, including each element with probability $1/2$ in each of the parts. Then both $U_\ell$, $\ell\in \{i,j\}$, are distributed as $W$ above. Thus, the probability that there is $F_\ell \in \g_\ell'$ such that $F_\ell\subset U_\ell$ is strictly bigger than $\frac 12$. Using the union bound, we conclude that, with positive probability, it holds that there are such $F_\ell$, $F_\ell\subset U_\ell,$ for each  $\ell \in\{i,j\}$. Fix such choices of $U_\ell$ and $F_\ell$, $\ell \in \{i,j\}$. Then, on the one hand, both $F_i\cup S_i$ and $F_j\cup S_j$ belong to $\ff$ and, on the other hand, $|(F_i\cup S_i)\cap (F_j\cup S_j)| = |S_i\cap S_j|<t$, a contradiction with $\ff$ being $t$-intersecting. %$_\ell\cup F_\ell$, $i=1,\ldots, s$, satisfy $|(X_1\cup F_1)\cup \ldots \cup (X_s\cup F_s)|>\sum_{i=1}^s|X_i\cup F_i|-t$  and, on the other hand, $X_j\cup F_j\in \ff_j$ for every $j=1,\ldots, s$. This contradicts the assumption that $\ff_1,\ldots, \ff_s$ are $t$-cross-dependent.
  \end{proof}
This concludes the proof of Theorem~\ref{thm2}.\end{proof}
We note that Theorem~\ref{thm2} can be formulated and proved for $r$-spread families instead of $\tau$-homogeneous families. Let us now derive Theorem~\ref{thmintro3}.

\begin{proof}[Proof of Theorem~\ref{thmintro3}]
The first thing to note is that  there are no regular intersecting families of $k$-sets if $n>k^2$. Indeed, the average degree of an element is $\frac kn|\ff|$, which should be the degree of {\it any} element due to regularity. At the same time, any set $F\in \ff$ intersects all other sets in $\ff$, and so one of its elements has degree at least $\frac 1k|\ff|$, which is bigger than the previous expression for $n>k^2$.

For $q\ge k$ the second restriction on $n$ in the theorem becomes $n\ge 4k^2$, and, as we have just seen, this is impossible. Thus, we suppose that $q<k$ in what follows.    Arguing indirectly, assume that $|\ff|> 2^{-q}{n\choose k}$. Apply Theorem~\ref{thm2} to $\ff$ with $\tau=2$, $t=1$ and $q$, obtaining the families $\s$ and $\ff'$, where $|\ff'|<\frac 12|\ff|$. Next, take any set $B\in S$ and note that all sets from $\ff\setminus \ff'$ intersect it. (This is because of (i) and the fact that $\s$ is intersecting.) Therefore, one of the elements in $B$ has degree at least
  $$\frac{|\ff\setminus \ff'|}{|B|}\ge \frac{|\ff|}{2q}.$$
  At the same time, the average degree of $\ff$ is $\frac kn|\ff|< \frac{|\ff|}{2q}$. This contradicts the regularity of $\ff$.
\end{proof}

\subsection{Spread approximation procedure} \label{sec31}

One common key feature of Theorem~\ref{thm2} and the following spread approximation results is the iterative procedure to construct the family $\s$. Let us abstract it in this subsection.
In what follows, we use the following notation: given two families of sets $\aaa,\s\subset 2^{[n]}$, let $\aaa[\s]$ stand for the family of all sets $F \in \aaa$ containing at least one set from $\s$.  Note that $\aaa[\s\cup \s'] = \aaa[\s]\cup \aaa[\s']$ for any three families $\aaa, \s, \s'$.

\begin{lem}\label{lem31} Fix an `ambient' family $\aaa$ and parameters $\tau, q\ge 1$. For a family $\ff\subset \aaa$ there exists  a family $\s$ of sets of size at most $q$ ({\emph a spread approximation of $\ff$}) and a `remainder' $\ff'\subset \ff$ such that
\begin{itemize}
    \item[(i)] We have $\ff\setminus \ff'\subset \aaa[\s]$;
    \item[(ii)] for any $B\in \s$ there is a family $\ff_B\subset \ff$ such that $\ff_B(B)$ is $(\aaa(B),\tau)$-homogeneous;
    \item[(iii)] $|\ff'|\le \tau^{-q-1}|\aaa|$.
  \end{itemize}
\end{lem}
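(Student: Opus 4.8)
The plan is to abstract exactly the iterative construction that already appeared in the proof of Theorem~\ref{thm2}, but now with the relative notion of homogeneity (Observation~\ref{obs3}) in place of the absolute one. Concretely, I would set $\ff^1 := \ff$ and, for $i = 1, 2, \ldots$, run the following step: find a \emph{maximal} set $S_i$ satisfying $|\ff^i(S_i)| \ge \tau^{|S_i|} \frac{|\aaa(S_i)|}{|\aaa|} |\ff^i|$ (such a set exists since $S_i = \emptyset$ trivially satisfies the inequality with both sides equal to $|\ff^i|$). If $|S_i| > q$ or $\ff^i = \emptyset$, stop; otherwise set $\ff^{i+1} := \ff^i \setminus \ff^i(S_i)$ and continue. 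Since $\ff$ is finite and each step removes the nonempty family $\ff^i(S_i)$ (it contains $\ff^i(\emptyset)$-type elements — more precisely it is nonempty because $S_i$ is chosen among valid sets and $\ff^i(S_i)$ is at least as large as the right-hand side, which is positive when $\ff^i \neq \emptyset$), the procedure terminates, say at step $m$.

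Having run the procedure, I would define $\s := \{S_1, \ldots, S_{m-1}\}$ and $\ff' := \ff^m$. Property (i) is then immediate: every $A \in \ff$ that is not in $\ff^m$ was removed at some step $i \le m-1$, i.e.\ $A \in \ff^i(S_i) \subset \aaa(S_i) \subset \aaa(\s)$. For property (ii), I would take $\ff_B := \ff^i$ where $B = S_i$; since $S_i$ was chosen maximal with the displayed inequality for $\ff^i$, Observation~\ref{obs3} (applied with $\aaa$ as the ambient family, $\tau$, and $\ff^i \subset \aaa$) says precisely that $\ff^i(S_i) = \ff_B(B)$ is $(\aaa(S_i), \tau)$-homogeneous, which is $(\aaa(B), \tau)$-homogeneous as required. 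For property (iii), note that the procedure stopped at step $m$ either because $\ff^m = \emptyset$, in which case $|\ff'| = 0 \le \tau^{-q-1}|\aaa|$, or because $|S_m| > q$, i.e.\ $|S_m| \ge q+1$; in the latter case, since $S_m$ satisfies the defining inequality for $\ff^m$, we get
$$|\ff^m| \le \tau^{-|S_m|} \frac{|\aaa|}{|\aaa(S_m)|} |\ff^m(S_m)| \le \tau^{-|S_m|} \frac{|\aaa|}{|\aaa(S_m)|} |\aaa(S_m)| = \tau^{-|S_m|} |\aaa| \le \tau^{-q-1}|\aaa|,$$
where I bounded $|\ff^m(S_m)| \le |\aaa(S_m)|$ using $\ff^m \subset \aaa$, and used $\tau \ge 1$ together with $|S_m| \ge q+1$ in the last step.

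There is essentially no hard step here — the lemma is a clean extraction of the bookkeeping already carried out for Theorem~\ref{thm2}, with the only substantive change being that the absolute homogeneity/spreadness input (Observations~\ref{obs1}, \ref{obs2}) is replaced throughout by its relative counterpart (Observation~\ref{obs3}). The one point that deserves a moment of care is \textbf{termination and the bound on} $|\ff^m|$: one must make sure the stopping criterion $|S_i| > q$ is what triggers the $\tau^{-q-1}$ factor, and that the "$\ff^i = \emptyset$" branch is handled separately so that $\s$ never contains an oversized set. Everything else — finiteness of the procedure, the telescoping $\ff \setminus \ff' = \bigcup_{i<m} \ff^i(S_i)$, and the membership containments — is routine. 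Note that, unlike in Theorem~\ref{thm2}, no spreadness/random-partition argument is needed at this level of abstraction: the $t$-intersecting (or $(t-1)$-avoiding) structure of $\s$ is \emph{not} claimed by Lemma~\ref{lem31} and will be established separately in the respective downstream applications.
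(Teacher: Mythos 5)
Your proof is correct and is essentially identical to the paper's own argument: the same greedy peeling procedure (choose a maximal $S_i$ witnessing the relative-homogeneity threshold, remove $\ff^i(S_i)$, stop when $|S_i|>q$ or $\ff^i=\emptyset$), the same definitions $\s=\{S_1,\dots,S_{m-1}\}$, $\ff'=\ff^m$, and the same verification of (i)--(iii) via Observation~\ref{obs3} and the rearranged threshold inequality. Your choice $\ff_B:=\ff^i$ (so that $\ff_B(B)=\ff^i(S_i)$ is the homogeneous family) is in fact the cleaner reading of what the paper intends in (ii).
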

The lemma is obtained using the following procedure.  For $i=1,2,\ldots $ with $\ff^1:=\ff$ we do the following steps.
\begin{enumerate}
    \item Find an inclusion-maximal set $S_i$ such that  $|\ff^i(S_i)|\ge  \tau^{|S_i|} \frac{|\aaa(S_i)|}{|\aaa|} |\ff^i|$;
    \item If $|S_i|> q$ or $\ff^i = \emptyset$ then stop. Otherwise, put $\ff^{i+1}:=\ff^i\setminus \ff^i[S_i]$.
\end{enumerate}

\begin{proof}[Verifying (i)--(iii)] The family $\ff^i(S_i)$ is $(\aaa(S_i),\tau)$-homogeneous by Observation~\ref{obs3}.

Let $m$ be the step of the procedure for $\ff$ at which we stop. Put  $\s:=\{S_1,\ldots, S_{m-1}\}$. Clearly, $|S_i|\le q$ for each $i\in [m-1]$. The family $\ff_{B}$ promised in (ii) is defined to be $\ff^i[S_i]$  for $B=S_i$. Next, note that if $\ff^m$ is non-empty, then $$|\ff^m|\le \tau^{-|S_m|} \frac{|\aaa|}{|\aaa(S_m)|} |\ff^{m}(S_m)|\le  \tau^{-|S_m|} |\aaa|.$$
We put $\ff':=\ff^m$. Since either $|S_m|>q$ or $\ff' = \emptyset $, we have $|\ff'|\le \tau^{-q-1}|\aaa|$.
\end{proof}

Note that we did not mention any potential properties of $\ff$ or its approximation $\s$. We have seen before that (ii) played a crucial role in establishing that $\s$ is $t$-intersecting for a $t$-intersecting $\ff$. In fact, for sufficiently homogeneous families, we will be able to derive even stronger properties.  We will see this in Section~\ref{sec4}.

\subsection{Spread approximations for subfamilies of spread families}
The following result is a vast generalization of Theorem~\ref{thm2} from subfamilies of ${[n]\choose k}$ to subfamilies of sufficiently spread families. As in the beginning of Section~\ref{sec3}, we present the result for the property of being  $t$-intersecting. This can be extended to several families (and more complicated properties), as we will show in the following sections.

Given a family $\aaa\subset 2^{[n]}$ of sets and $q,r\ge 1$, we say that $\aaa$ is {\it $(r,q)$-spread} if for each $S\in {[n]\choose \le q}$, the family $\aaa(S)$ is $r$-spread. Note that putting $S = \emptyset$ implies that $\aaa$ is $r$-spread, so $(r, q)$-spreadness is a stronger condition than the usual $r$-spreadness.

\begin{thm}\label{thm3}
  Let $n,k,t\ge1$ be some integers and $\aaa\subset 2^{[n]}$ be a family. Consider a family $\ff\subset \aaa\cap {[n]\choose \le k}$ that is  $t$-intersecting. Let $q, \tau \ge 1$ and assume that $\aaa$  is $(r_0,q)$-spread with $r_0$ satisfying the inequalities $r_0\ge 2\tau q$ and $r_0> 2^{12}\tau \log_2(2k)$.

  Then, in terms of Lemma~\ref{lem31}, the families $\s$ and $\ff'$ satisfy all properties guaranteed in Lemma~\ref{lem31} and, moreover, $\s$ is $t$-intersecting.
\end{thm}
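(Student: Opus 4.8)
The plan is to follow the blueprint of the proof of Theorem~\ref{thm2} essentially line by line, with the ambient family $\aaa$ playing the role that ${[n]\choose k}$ played there. The families $\s$ and $\ff'$ will be exactly the ones produced by the procedure of Lemma~\ref{lem31} applied to $\ff\subset\aaa$ with parameters $\tau$ and $q$, so properties (i)--(iii) are immediate; the entire content is the analogue of Lemma~\ref{lemtint}, namely that $\s$ is $t$-intersecting. Everything hinges on one observation that converts the \emph{relative} homogeneity delivered by the procedure into ordinary spreadness.

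Here is that observation. The procedure outputs only sets $S_i$ of size at most $q$, and for each of them $\g_i:=\ff^i(S_i)$ is $(\aaa(S_i),\tau)$-homogenous by Observation~\ref{obs3}. Since $|S_i|\le q$, the hypothesis that $\aaa$ is $(r_0,q)$-spread says precisely that $\aaa(S_i)$ is $r_0$-spread, i.e. $\frac{|\aaa(S_i)(X)|}{|\aaa(S_i)|}\le r_0^{-|X|}$ for every $X$. Feeding this into the definition of relative homogeneity gives $\frac{|\g_i(X)|}{|\g_i|}\le\tau^{|X|}\frac{|\aaa(S_i)(X)|}{|\aaa(S_i)|}\le (\tau/r_0)^{|X|}$, so $\g_i$ is $(r_0/\tau)$-spread on the ground set $[n]\setminus S_i$. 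This is exactly the role played in Theorem~\ref{thm2} by the implication "$\tau$-homogenous $\Rightarrow$ $\frac{n}{\tau k}$-spread" (Observation~\ref{obs1}), and the two hypotheses $r_0\ge 2\tau q$ and $r_0>2^{12}\tau\log_2(2k)$ are the exact analogues of $n\ge 2q\tau k$ and $n>2^{12}\tau k\log_2(2k)$.

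With this in hand I would repeat the argument of Lemma~\ref{lemtint}. Take (possibly equal) $S_i,S_j\in\s$ and suppose $|S_i\cap S_j|<t$. Restricting to sets avoiding the other core, the families $\g_j':=\g_j(\bar S_i)$ and $\g_i':=\g_i(\bar S_j)$ each keep at least a $1-\frac{\tau q}{r_0}\ge\frac12$ fraction of the original (a single point carries $\g$-measure at most $\tau/r_0$ and there are at most $q$ points to avoid, using $r_0\ge 2\tau q$); they now live on the common ground set $[n]\setminus(S_i\cup S_j)$, and by $\g'(Y)\subseteq\g(Y)$ together with the halving they are $(r_0/(2\tau))$-spread, hence $r$-spread with $r>2^{11}\log_2(2k)$. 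Apply Theorem~\ref{thmtao} with $m=\log_2(2k)$ and $\delta=(2\log_2(2k))^{-1}$, so that $m\delta=\frac12$ and $r\delta>2^{10}$: a $\frac12$-random subset of $[n]\setminus(S_i\cup S_j)$ contains a member of $\g_j'$, and likewise of $\g_i'$, with probability exceeding $1-\big(5/\log_2 2^{10}\big)^{\log_2 2k}|\mu|=1-\frac{|\mu|}{2k}\ge\frac12$, where the expected size $|\mu|$ of a member is at most $k$. Splitting $[n]\setminus(S_i\cup S_j)$ into two random halves $U_i,U_j$, each distributed as such a random subset, and union-bounding, with positive probability there are $F_i\in\g_i'$ with $F_i\subseteq U_i$ and $F_j\in\g_j'$ with $F_j\subseteq U_j$. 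Then $F_i\cup S_i$ and $F_j\cup S_j$ both lie in $\ff$, while $U_i\cap U_j=\emptyset$ and $F_i\cap S_j=F_j\cap S_i=\emptyset$ force $(F_i\cup S_i)\cap(F_j\cup S_j)=S_i\cap S_j$, of size $<t$, contradicting $t$-intersection. (When $i=j$ one still wins: a $t$-intersecting family has no member of size $<t$, so $S_i\notin\ff$, whence $\emptyset\notin\g_i$ and the disjoint sets $F_i,F_j$ are nonempty, hence distinct, and $(F_i\cup S_i)\cap(F_j\cup S_i)=S_i$, of size $<t$.)

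The combinatorics of the last paragraph is not the difficulty --- it is verbatim Lemma~\ref{lemtint}. The single genuinely new point to verify carefully is the bridge of the second paragraph: that relative $\tau$-homogeneity of $\g_i$ against $\aaa(S_i)$, combined with $(r_0,q)$-spreadness of $\aaa$ (which is why the hypothesis is stated with the parameter $q$, matching $|S_i|\le q$), yields absolute $(r_0/\tau)$-spreadness of $\g_i$ on the ground set $[n]\setminus S_i$, and that this survives the further restriction to sets avoiding $S_j$. One should also keep in mind that $k$ here denotes a bound on the sizes of the members of $\aaa$, so that $|\mu|\le k$; this is precisely what lets the factor $\big(5/\log_2 2^{10}\big)^{\log_2 2k}$ in Theorem~\ref{thmtao} beat $1/|\mu|$, exactly as in Theorem~\ref{thm2}.
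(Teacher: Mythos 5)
Your proposal is correct and follows essentially the same route as the paper: the paper's proof likewise reduces everything to the $t$-intersection property of $\s$, restricts $\g_\ell$ to sets avoiding the other core (losing at most a factor $1-\tau q/r_0\ge\tfrac12$ via the $r_0$-spreadness of $\aaa(S_\ell)$), concludes that $\g_\ell'$ is $r$-spread with $r>2^{11}\log_2(2k)$, and finishes with the identical application of Theorem~\ref{thmtao} and a random two-colouring. The only cosmetic difference is that you convert relative homogeneity to absolute spreadness before restricting, whereas the paper carries the relative form through the restriction and converts at the end; the bounds obtained are the same.
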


We will deduce the Ahlswede--Khachatrian theorem for permutations from this theorem. We discuss more  examples of $\aaa$  to which Theorem~\ref{thm3} (and~\ref{thmapproxtrivial}) is applicable in Section~\ref{sec5}.
\begin{proof}
The only thing we need to verify is the $t$-intersection property. This proof is analogous to the proof of  Lemma~\ref{lemtint}. %\todo{I'm not sure if it's good to use the notation $\ff^l(S_l)$ etc here (which only appeared inside the proof of Theorem \ref{thm2}) and maybe use $\ff_{S_l}(S_l)$ instead (which appears in the statement of Theorem \ref{thm2} and others). Though here you refer to a particular part of that proof so maybe keeping notation as it is is fine.}

Take any (not necessarily distinct) $S_1,S_2\in \mathcal S$  and assume that $|S_1\cap S_2|<t$. Recall that for both $\ell\in \{1,2\}$ the family  $\g_\ell:=\ff_{S_\ell}(S_\ell)$ is $(\aaa(S_\ell),\tau)$-homogeneous. We use this in the second inequality below.
  $$|\g_1(\bar S_2)| \ge |\g_1|-\sum_{x\in S_2\setminus S_1} |\g_1(\{x\})|\ge \Big(1-\frac {|S_2|\tau |\aaa(S_1\cup\{x\})|}{|\aaa(S_1)|}\Big) |\g_1|\ge \frac 12|\g_1|.$$
  In the last inequality we used that $\aaa(S_1)$ is $(2\tau q)$-spread and that $|S_2|\le q$. The same is valid for $\g_2(\bar S_1)$. Note that both $\g_1':=\g_1(\bar S_2)$ and $\g_2':=\g_2(\bar S_1)$ are subfamilies of $2^{[n]\setminus (S_1\cup S_2)}.$ Moreover,  $\g_\ell'$ is $(\aaa(S_\ell),2\tau)$-homogeneous for each $\ell \in \{1,2\}$.  Indeed, this holds because of the displayed inequality and the   trivial inclusion $\g_\ell'(Y)\subset \g_\ell(Y)$, valid for any $Y$. This homogeneity implies that for any $Y\subset [n]\setminus (S_1\cup S_2)$ we have
  $$\frac{|\g_\ell'(Y)|}{|\g_\ell'|}\le (2\tau)^{|Y|}\frac{|\aaa(S_\ell\cup Y)|}{|\aaa(S_\ell)|}< \big(2^{11}\log_2(2k)\big)^{-|Y|},$$
  where the last inequality is due to the fact that $\aaa(S_\ell)$ is $r_0$-spread for $r_0>2^{12}\tau \log_2(2k)$. Thus,  $\g_\ell'$ is $r$-spread for some  $r>2^{11}\log_2(2k)$.

  The rest of the proof is identical to that of Theorem~\ref{thm2}. We are about to apply Theorem~\ref{thmtao}. Let us put $m= \log_2(2k)$ and $\delta = (2\log_2(2k))^{-1}$. Note that $m\delta = \frac 12$ and $r\delta > 2^{10}$ by our choice of $r$.  Theorem~\ref{thmtao} implies that a $\frac{1}{2}$-random subset $W$ of $[n]\setminus (S_i\cup S_j)$ contains a set from $\g_j'$ with probability strictly bigger than
  $$1-\Big(\frac 5{\log_2 2^{10}}\Big)^{\log_2 2k} k = 1-2^{-\log_2 2k} k = \frac 12.$$

  Consider a random partition of $[n]\setminus (S_1\cup S_2)$ into $2$ parts $U_1,U_2$, including each element with probability $1/2$ in each of the parts. Then both $U_\ell$, $\ell\in \{1,2\}$, are distributed as $W$ above. Thus, the probability that there is $F_\ell \in \g_\ell'$, such that $F_\ell\subset U_\ell$, is strictly bigger than $\frac 12$. Using the union bound, we conclude that, with positive probability, it holds that there are such $F_\ell$, $F_\ell\subset U_\ell,$ for each  $\ell \in\{1,2\}$. Fix such a choice of $U_\ell$ and $F_\ell$, $\ell \in \{1,2\}$. But then, on the one hand, both $F_1\cup S_1$ and $F_2\cup S_2$ belong to $\ff$ and, on the other hand, $|(F_1\cup S_1)\cap (F_2\cup S_2)| = |S_1\cap S_2|<t$, a contradiction with $\ff$ being $t$-intersecting. %$_\ell\cup F_\ell$, $i=1,\ldots, s$, satisfy $|(X_1\cup F_1)\cup \ldots \cup (X_s\cup F_s)|>\sum_{i=1}^s|X_i\cup F_i|-t$  and, on the other hand, $X_j\cup F_j\in \ff_j$ for every $j=1,\ldots, s$. This contradicts the assumption that $\ff_1,\ldots, \ff_s$ are $t$-cross-dependent.
\end{proof}

\subsection{Spread approximations are simple for large families}
In a typical extremal problem, we first apply (an analogue of) Theorem~\ref{thm2} in order to get an approximation for the extremal family. The second step is to show that the approximating family is `simple'. We obtain such a result in the present subsection.

 Recall that a $t$-intersecting family $\s$ is {\it non-trivial } if $|\cap_{F\in \s} F|<t.$

\begin{thm}\label{thmapproxtrivial}
Let $\varepsilon\in (0,1]$, $n,r,q, t \ge 1$  be such that $\varepsilon r\ge 2^{17} q \log_2 q$.
Let $\aaa \subset 2^{[n]}$ be an $(r, t)$-spread family and let $\s \subset {[n] \choose \le q}$ be a non-trivial $t$-intersecting family.
Then there exists a $t$-element set $T$ such that $|\aaa[\s]| \le \varepsilon |\aaa[T]|$.
\end{thm}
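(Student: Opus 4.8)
The plan is to bound $\aaa(\s)$ by peeling off ``trivial pieces'' of $\s$ one at a time, using the spreadness of $\aaa$ (via Theorem~\ref{thmtao}) to control the total loss.

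\emph{Reduction.} First I would reduce to the case $\bigcap_{S\in\s}S=\emptyset$. If $C:=\bigcap_{S\in\s}S$ has size $c\le t-1$, replace $\aaa$ by $\aaa(C)$ (still $r$-spread on sets of size $\le t-c$, since $\aaa$ is $(r,t)$-spread), $\s$ by $\s^-:=\{S\setminus C:S\in\s\}$ (non-trivial, $(t-c)$-intersecting, sets of size $\le q$), and $t$ by $t-c$; one checks $|\aaa(\s)|=|\aaa(C)(\s^-)|$ and that a $(t-c)$-set $T'$ produced for the reduced instance gives $T=C\cup T'$ with $\aaa(T)=\aaa(C)(T')$. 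So assume $\bigcap_{S\in\s}S=\emptyset$. Then $|\s|\ge 2$, and a member of size $t$ would be contained in every member of $\s$ (by $t$-intersection), hence in $\bigcap\s$ — impossible; so every inclusion-minimal member of $\s$ has size $\ge t+1$, and $\aaa(\s)=\aaa(\s_{\min})$ where $\s_{\min}$ is the family of minimal members.

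\emph{The basic estimate.} Fix a minimal $S_1\in\s$ of minimum size $s_1\le q$ and a $t$-subset $T\subseteq S_1$, and set $\s_T:=\{S\in\s:T\subseteq S\}$. Since $|T|=t\ge 1$ and $\bigcap\s=\emptyset$, we have $\s_T\ne\s$, so there is $S^\dagger\in\s$ with $T\not\subseteq S^\dagger$, i.e. $|T\cap S^\dagger|\le t-1$. For every $S\in\s_T$, the identity $S\cap S^\dagger=((S\setminus T)\cap S^\dagger)\sqcup(T\cap S^\dagger)$ together with $|S\cap S^\dagger|\ge t$ gives $|(S\setminus T)\cap S^\dagger|\ge1$. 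Hence every $A\in\aaa(\s_T)$ contains $T\cup\{z\}$ for some $z\in S^\dagger\setminus T$, so $\aaa(\s_T)\subseteq\bigcup_{z\in S^\dagger\setminus T}\aaa(T\cup\{z\})$; using that $\aaa(T)$ is $r$-spread (as $|T|=t$), each $|\aaa(T\cup\{z\})|\le r^{-1}|\aaa(T)|$, whence
$$|\aaa(\s_T)|\le|S^\dagger\setminus T|\cdot r^{-1}|\aaa(T)|\le\frac qr|\aaa(T)|.$$
The same bound holds with $\s$ replaced by any non-trivial $t$-intersecting subfamily of it. An entirely analogous argument shows that if $P_1,\dots,P_\ell\in\s$ form a sunflower with core of size exactly $t$, then $\ell\le q$ (the petals minus the core are pairwise disjoint and each must meet $S^\dagger\setminus T$).

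\emph{Combining, and where the difficulty lies.} Covering $\aaa(\s)$ naively by $\{\aaa(\s_T):T\in\binom{S_1}{t}\}$ (legitimate since every $S\in\s$ meets $S_1$ in $\ge t$ points) loses a prohibitive factor $\binom{s_1}{t}$. Instead I would strip trivial pieces off iteratively: maintain a non-trivial $t$-intersecting $\s^{(i)}$ (starting from $\s$); at step $i$, apply Theorem~\ref{thmtao} to the (spread) family of smallest minimal members of $\s^{(i)}$ to extract a sunflower whose core, lying inside $t$-intersecting sets, has size $\ge t$ — and, once cores of size exactly $t$ are exhausted (each supports at most $q$ petals of $\s$, by the estimate above), size $\ge t+1$; pick $T_i$ inside such a core, so that $\aaa(\s^{(i)}_{T_i})$ absorbs a definite fraction of $\aaa(\s^{(i)})$. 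Each round costs $\tfrac qr|\aaa(T_i)|$, and the goal is that only $O(\varepsilon r/q)=O(\log q)$ rounds are needed, yielding $|\aaa(\s)|\le\varepsilon\max_i|\aaa(T_i)|$. The main obstacle is precisely this bookkeeping: quantifying, from the sunflower bound of Theorem~\ref{thmtao} (which is the source of the $2^{17}q\log_2 q$ threshold, with parameters $m\approx\log_2 q$ and $|\mu|\le q$), that each round captures enough of the remaining family so that the number of rounds — and hence the accumulated multiplicative loss — stays $O(q\log q/r)$ rather than blowing up exponentially in $q$ or $t$.
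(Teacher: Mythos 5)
Your reduction to $\bigcap_{S\in\s}S=\emptyset$ and your ``basic estimate'' $|\aaa(\s_T)|\le\frac qr|\aaa(T)|$ are both correct (the latter is essentially part (v) of the key lemma in the paper's argument). But the proof stops exactly where the real work begins: the combining step is not an argument, it is a wish list, and the scheme you sketch for it does not obviously converge. You propose to iterate ``extract a sunflower, pick $T_i$ in its core, remove $\s^{(i)}_{T_i}$'' and hope that each round absorbs ``a definite fraction'' of $\aaa(\s^{(i)})$ so that only $O(\log q)$ rounds are needed. Nothing in Theorem~\ref{thmtao} or in the sunflower bound \eqref{eqalweiss} gives you such a per-round fraction: a sunflower certifies the \emph{existence} of many sets through one core, not that the sets through that core carry a constant proportion of the measure $|\aaa(\cdot)|$. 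Since each round costs $\frac qr|\aaa(T_i)|$ and you need the total cost below $\varepsilon|\aaa(T)|$, you must cap the number of rounds at roughly $\varepsilon r/q\approx 2^{17}\log_2 q$, and you have no mechanism forcing termination that fast. This is a genuine gap, not a routine verification.

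The paper closes it with a different decomposition, organized by \emph{set size} rather than by ``fraction captured.'' One first replaces $\s$ by a critical $t$-intersecting family $\T$ (Observation~\ref{obs22}: every proper subset of a member fails to $t$-intersect some member) without shrinking $\aaa(\s)$. Criticality has a strong structural consequence: among the sets of $\T_i$ (all of size $\le q-i$), there is no sunflower with $q-i-t+2$ petals, so by \eqref{eqalweiss} the number $|\W_i|$ of sets of the top size $q-i$ is at most $(C_0q\log_2 q)^{q-i-t}$. By $(r,t)$-spreadness each such set $W$ satisfies $|\aaa(W)|\le r^{-(|W|-t)}|\aaa(T)|$, so the whole layer $\W_i$ contributes at most $(C_0q\log_2 q/r)^{q-i-t}|\aaa(T)|$ --- a geometric series summing to $O(\varepsilon)|\aaa(T)|$ under the hypothesis $\varepsilon r\ge 2^{17}q\log_2 q$. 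Peeling the layers one size at a time until only a single $t$-set survives, and handling that last step with (the analogue of) your basic estimate, gives the theorem. So the missing idea is the pair ``criticality $\Rightarrow$ no large sunflowers $\Rightarrow$ few sets of each size,'' which is what quantifies the bookkeeping you left open; I would rework the combining step along these lines rather than trying to salvage the constant-fraction-per-round iteration.
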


 %We say that a $t$-intersecting family $\T\subset 2^{[n]}$ is {\it maximal}
%We note that we can put $C=2^{16}$ in the statemen above.
For the proof, we will need the following simple observation.

\begin{obs}\label{obs22}
For any positive integers $n,p$, a family $\aaa\subset 2^{[n]}$ and a $t$-intersecting family $\s \subset {[n] \choose \le p}$ there exists a $t$-intersecting family $\T \subset {[n] \choose \le p}$ such that $\aaa[\s] \subset \aaa[\T]$ and for any $T \in \T$ and any proper subset $X \subsetneq T$ there exists $T' \in \T$ such that $|X \cap T'| < t$.
\end{obs}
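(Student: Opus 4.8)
The plan is to prove Observation~\ref{obs22} by an extremal argument: take $\T$ to be a $t$-intersecting family that covers at least as much of $\aaa$ as $\s$ does but has its sets shrunk as far as possible. Precisely, I would look at the collection of all $t$-intersecting families $\T\subset{[n]\choose\le p}$ with $\aaa(\s)\subset\aaa(\T)$; this collection is finite and nonempty (it contains $\s$ itself), so I can choose a member $\T$ minimizing the potential $\Phi(\T):=\sum_{T\in\T}|T|$. (Equivalently, one runs the obvious shrinking process and notes that $\Phi$ is a nonnegative integer which strictly decreases at each step, so it terminates.) The claim will be that any minimizer has the stated minimality property.

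To see this, suppose for contradiction that some $T\in\T$ has a proper subset $X\subsetneq T$ with $|X\cap T'|\ge t$ for \emph{every} $T'\in\T$. Taking $T'=T$ forces $|X|=|X\cap T|\ge t$. I would then replace $T$ by $X$, i.e.\ pass to $\T':=(\T\setminus\{T\})\cup\{X\}$, and check three things: (a) every set of $\T'$ still has size at most $p$, since $X\subseteq T$; (b) $\T'$ is $t$-intersecting --- pairs inside $\T\setminus\{T\}$ by the hypothesis on $\T$, the pair $X$ with itself because $|X|\ge t$, and $X$ with any $T'\in\T\setminus\{T\}$ by the assumption on $X$; and (c) $\aaa(\s)\subset\aaa(\T')$, because any $F\in\aaa$ with $T\subset F$ also has $X\subset F$, hence $\aaa(T)\subset\aaa(X)$ and therefore $\aaa(\T)\subset\aaa(\T')$ using $\aaa(\s_1\cup\s_2)=\aaa(\s_1)\cup\aaa(\s_2)$. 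But then $\Phi(\T')\le\Phi(\T)-(|T|-|X|)<\Phi(\T)$, contradicting minimality.

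I do not expect a genuine obstacle here --- the statement is essentially a normalization lemma --- but two small points deserve care. First, the degenerate case where the shrunk set $X$ already belongs to $\T\setminus\{T\}$: then $\T'=\T\setminus\{T\}$, the replacement is just a deletion, and $\Phi$ still strictly decreases, so the argument goes through unchanged. Second, the correct parsing of the conclusion when $|X|<t$: in that case the witnessing $T'$ with $|X\cap T'|<t$ may simply be taken to be $T$ itself (as $|X\cap T|=|X|<t$), so the only substantive case is $|X|\ge t$, which is exactly what the minimality of $\Phi$ rules out.
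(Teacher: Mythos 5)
Your proof is correct and is essentially the paper's argument: the paper proves Observation~\ref{obs22} with the one-line remark that one repeatedly replaces sets of $\s$ by proper subsets while preserving the $t$-intersecting property, and your potential-function formulation (minimizing $\sum_{T\in\T}|T|$) is just a clean way of packaging that same shrinking process, with all the verification steps done properly.
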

%\fixme{One natural way to choose such $\T$ is to take it to be the family of minimal $t$-covers of $\s$ of size at most $p$, i.e., the collection of all minimal sets $T$ of size at most $r$
 %such that $|T\cap S|\ge t$ for any $S\in \s$.} \todo{This is not accurate, it'd be better to say:\\
One natural way to choose such $\T$ is to repeatedly replace sets in $\s$ by their proper subsets while preserving the $t$-intersecting property.

In terms of Theorem~\ref{thmapproxtrivial}, let us iteratively define the following series of families.
\begin{enumerate}
    \item Let $\T_0$ be a family given by Observation~\ref{obs22} when applied to $\aaa$ and $\s$ with $p= q$.
    \item For $i = 0, \ldots, q-t$ we put $\W_i = \T_i \cap {[n] \choose q-i}$ and let $\T_{i+1}$ be the family given by Observation~\ref{obs22} when applied to the families $\aaa$ (playing the role of $\aaa$) and  $\T_{i}\setminus \W_{i}$ playing the role of $\mathcal S$ with $p = q-i-1$.
\end{enumerate}
Remark that $\T_i$ is $t$-intersecting for each $i=0,\ldots, q-t$ by definition. We summarize the properties of these series of families in the following lemma.

\begin{lem}\label{lemkeyred} The following properties hold for each $i = 0,\ldots, q-t$. %In (iv) and (v) we additionally assume that $k>C_0 q\log q$ with a sufficiently large absolute constant $C_0$.
\begin{itemize}
  \item[(i)] All sets  in $\T_i$ have size at most $q-i$.
  \item[(ii)] We have $\aaa[\T_{i-1}]\subset \aaa[\T_{i}]\cup \aaa[\W_{i-1}]$.
  \item[(iii)] The family $\T_i$ does not have a sunflower with $q-i-t+2$ petals.
  \item[(iv)] We have $|\W_i|\le (C_0 q\log_2 q)^{q-i-t}$ with some absolute constant $C_0<2^{15}$.
  \item[(v)] If $\T_i$ consists of a single $t$-element set $X$ and this is not the case for $\T_{i-1}$ then $|\aaa[\T_{i-1}\setminus \W_{i-1}]|\le \frac{q}r |\aaa[X]|$.
\end{itemize}
\end{lem}
\begin{proof}
(i) This easily follows by induction on $i$ from the fact that all sets in $\s$ have size at most $q$ and the definition of $\T_i$.

(ii) We have $\aaa[\T_{i-1}] = \aaa[\T_{i-1} \setminus \W_{i-1}] \cup \aaa[\W_{i-1}]$ and, by the definition of $\T_{i}$, we have $\aaa[\T_i]\supset \aaa[\T_{i-1}\setminus \W_{i-1}]$.

(iii) Assume there is a sunflower $T_1,\ldots, T_{q-i-t+2}\in \T_i$ with core $X$. Assume that a set $T'\in \T_i$ intersects $X$ in $t-j$ elements, $j>0$. Then $T'$ intersects each $T_\ell$ in at least $j$ elements, implying $|T'|\ge t-j+(q-i-t+2)j = t+(q-i-t+1)j\ge q-i+1$, a contradiction with the fact that all sets in $\T_i$ have size at most $q-i$. So any $T' \in \T_i$ must intersect $X$ in at least $t$ elements. This, however, contradicts the property of $\T_i$ guaranteed by Observation~\ref{obs22}: the set $X\subsetneq T_j$ intersects all sets from $\T_i$ in at least $t$ elements.

(iv) This is trivial for $i = q-t$ since $\T_{q-t}$ contains at most $1$ set. In what follows, we assume that $i<q-t$. Take any set $Y\in \W_i$. Since $\T_i$ is $t$-intersecting, there is a $t$-element subset $X\subset Y$ such that $|\W_i| \le {q-i \choose t} |\W_i(X)|={q-i \choose q-i-t} |\W_i(X)|$. Next, $\W_i(X)$ is $(q-i-t)$-uniform and does not contain sunflowers with $(q-i-t+2)$ petals by (iii). From \eqref{eqalweiss} we conclude that, for an absolute constant $C = 2^{10}$,
\begin{align*}
|\W_i| \le& {q-i \choose q-i-t} |\W_i(X)| \le {q-i \choose q-i-t}\big(C(q-i-t+2)\log_2\big((q-i-t+2)(q-i-t)\big)\big)^{q-i-t}\\
\le& \Big(\frac{e(q-i)}{q-i-t}\Big)^{q-i-t}\big(6C(q-i-t)\log_2 q\big)^{q-i-t}\le (20C q\log_2 q)^{q-i-t}.
\end{align*}

(v) Let us assume that $\T_i = \{X\}$ for some $t$-element set $X$. Note that all sets in $\T_{i-1}$ have size at least $t+1$. Otherwise, if there is $T\in \T_{i-1}$ of size $t$ then $T$ is a proper subset of all other sets from $\T_{i-1},$ which contradicts the property of $\T_{i-1}$ guaranteed by Observation~\ref{obs22}. Thus, the sets in $\T':=\T_{i-1}\setminus \W_{i-1}$, if any, have size at least $t+1$ and all contain $X$. Recall that, for a family $\ff$, $\tau(\ff)$ is the size of the smallest set $Y$ such that $Y\cap F\ne \emptyset$ for each $F\in \ff.$ Assume that $\tau(\T'(X))> q$. Each set in $\W_{i-1}$ either contains $X$ or intersects every set from $\T'(X)$. In the latter case, it has size at least $\tau(\T'(X))$, which is impossible because each set in $W_{i-1}$ has size at most $q$. Thus, all sets from $\W_{i-1}$ contain $X$, implying that all sets from $T_{i-1}$ contain $X$, a contradiction. Therefore, $\tau(\T'(X))\le q$. If $\{x_1, \ldots, x_q\}$ is a covering of $\T'(X)$ then we have
$$
|\aaa[\T']| \le |\aaa[X \cup \{x_1\}]| + \ldots + |\aaa[X\cup \{x_q\}]| \le \frac qr |\aaa[X]|.
$$
\end{proof}

\begin{proof}[Proof of Theorem~\ref{thmapproxtrivial}] %Using Lemma~\ref{lemkeyred}, it is easy to provide an upper bound on $\aaa(\s)$.
Fix $i$ as in Lemma~\ref{lemkeyred} (v). Note that by (i) such a choice always exists. Let $T$ be a $t$-element set such that $|\aaa[T]|$ is the largest possible. By $(r, t)$-spreadness, for any $j < i$ and any $W \in \W_j$ we have $|\aaa[W]| \le r^{-(q-j - t)}|\aaa[T']| \le r^{-(q-j - t)} |\aaa[T]|$, where $T' \subset W$ is an arbitrary subset of size $t$. By (iv) and a union bound, we get $|\aaa[\W_j]| \le r^{-(q-j-t)} (C_0 q \log_2 q)^{(q-j-t)} |\aaa[T]|$. Using this and (v) we obtain
\begin{align*}
|\aaa[\s]|\overset{(ii)}{\le} |\aaa[\T_{i-1}]|+\sum_{j=0}^{i-1}|\aaa[\W_j]|\overset{(iv),(v)}{\le}& \Big(\frac q r +\sum_{j=1}^{\infty} r^{-j}( C_0 q \log_2 q)^{j} \Big)|\aaa[T]|\\ \le  \ \ &\Big(\frac \epsilon 2 +\sum_{j=1}^{\infty} \big(\frac \epsilon4\big)^j \Big)|\aaa[T]|\le \varepsilon |\aaa[T]|,
\end{align*}
where in the third inequality we used the condition on $r$ and the bound on $C_0$.%\footnote{Note that here we needed only a much weaker variant of this condition.}
\end{proof}

\subsection{Application to permutations: proof of part 1 of Theorem~\ref{thmintro2}}\label{sec34}
Let $\Sigma_n$ be the set of permutations on $n$ elements. We view $\Sigma_n$ as an $n$-uniform subfamily in ${[n]^2 \choose n}$ by identifying a permutation $\sigma:[n]\to [n]$ with the set $\{(1,\sigma(1)),\ldots, (n,\sigma(n))\}\in {[n]^2\choose n}$. From now on, we adopt the set terminology to permutations. We say that $S\subset [n]^2$ is a {\it partial permutation} if $S\subset \sigma$ for a permutation $\sigma\in\Sigma_n$. For any partial permutation $S$ we have $|\Sigma_n(S)| = (n-|S|)!$. If $S$ is not a partial permutation then $|\Sigma_n(S)| = 0$. For any partial permutations $S,X$, such that $S\subset X$, we have $\frac{|\Sigma_n(X)|}{|\Sigma_n(S)|} = \frac {(n-|X|)!}{(n-|S|)!}$, and simple calculus shows that for any $|S|<n/4$ and any $X\supset S$, where $X,S$ are partial permutations, we have
$$\frac{|\Sigma_n(S)|}{|\Sigma_n(X)|} = \frac {(n-|S|)!}{(n-|X|)!}\ge \big((n-|S|)!\big)^{\frac{|X|-|S|}{n-|S|}}\ge \Big(\frac{n-|S|}{e}\Big)^{|X|-|S|}>\Big(\frac n4\Big)^{|X|-|S|}.$$
If $X$ (or even $S$) is not a partial permutation then we trivially have $|\Sigma_n(S)|\ge n^{|X|-|S|}|\Sigma_n(X)|$. That is, $\Sigma_n$ is $(\frac n4,\frac n4)$-spread.

Next, we apply Theorem~\ref{thm3} to $\Sigma_n$ with $\tau = 2,$ $r_0 = \frac n4$ and $q = 4t \log_2 n$. Using the spreadness of $\Sigma_n$, we see that all the conditions of Theorem~\ref{thm2} are satisfied, provided $\frac n4\ge 4q,$ $q\le n/4$, and $\frac n4>2^{13} \log_2(2n).$ The latter is satisfied for any $n>2^{18}$, and the former two are true for any $t\le \frac n{64 \log_2 n}$. We get the following result.

\begin{thm}\label{thmpermapprox}
Let $n,t\ge 1$ be some integers satisfying $n>2^{18}$ and $t\le \frac n{64 \log_2 n}$. Consider a family $\ff\subset \Sigma_n$ of permutations that is $t$-intersecting. Put $q = 4t \log_2 n.$ Then   there exists  a $t$-intersecting family $\s$ of  partial permutations of size at most $q$  and a `remainder' $\ff'\subset \ff$ such that
\begin{itemize}
    \item[(i)] We have $\ff\setminus \ff'\subset \Sigma_n[\s]$;
    \item[(ii)] for any $B\in \s$ there is a family $\ff_B\subset \ff$ such that $\ff_B(B)$ is $(\Sigma_n(B),2)$-homogeneous;
    \item[(iii)] $|\ff'|\le n^{-4t}|\Sigma_n|$.
  \end{itemize}
\end{thm}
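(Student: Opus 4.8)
The plan is to obtain Theorem~\ref{thmpermapprox} as a direct corollary of Theorem~\ref{thm3} applied with $\aaa = \Sigma_n$, viewed as an $n$-uniform subfamily of ${[n]^2\choose n}$, so essentially all the work is bookkeeping: verifying that the hypotheses of Theorem~\ref{thm3} hold for the chosen parameters. First I would recall the spreadness computation already carried out in the paragraph preceding the statement: for any partial permutation $S$ with $|S|<n/4$ and any $X\supseteq S$ one has $\frac{|\Sigma_n(S)|}{|\Sigma_n(X)|} = \frac{(n-|S|)!}{(n-|X|)!} \ge \left(\frac n4\right)^{|X|-|S|}$, and for $X$ that fails to be a partial permutation the ratio is $+\infty$; hence $\Sigma_n(S)$ is $\frac n4$-spread for every partial permutation $S$ with $|S|\le q \le n/4$, i.e.\ $\Sigma_n$ is $(\frac n4, q)$-spread. (For non-partial-permutation $S$ we have $\Sigma_n(S)=\emptyset$, which is vacuously $r$-spread for any $r$.)

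Next I would instantiate the parameters of Theorem~\ref{thm3}: take $\tau = 2$, $r_0 = \frac n4$, $k = n$ (since $\Sigma_n$ is $n$-uniform, the "$k$" appearing in the $\log_2(2k)$ condition is $n$), and $q = 4t\log_2 n$. The two numerical conditions of Theorem~\ref{thm3} read $r_0 \ge 2\tau q$, i.e.\ $\frac n4 \ge 4q = 16 t\log_2 n$, and $r_0 > 2^{12}\tau\log_2(2k) = 2^{13}\log_2(2n)$. I would check the second inequality holds for all $n > 2^{18}$ (a one-line estimate: $\log_2(2n) < 2\log_2 n \le 2\cdot 31$-ish $\ll 2^{16}$ for the relevant range, and more cleanly $\frac n4 > 2^{16} > 2^{13}\log_2(2n)$ once $n>2^{18}$, since $\log_2(2n) \le n/2^{18}$ is then false---so I would instead simply note $\log_2(2n)\le \frac{\sqrt n}{2^{6}}$ for $n>2^{18}$, giving $2^{13}\log_2(2n)\le 2^7\sqrt n < n/4$). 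And the first inequality, $\frac n4 \ge 16t\log_2 n$, is exactly the hypothesis $t\le \frac n{64\log_2 n}$. I also need $q\le n/4$ for the spreadness statement to be applicable at the relevant set sizes, which again follows from $t\le \frac n{64\log_2 n}$ (it gives $q = 4t\log_2 n \le n/16 < n/4$). Thus all hypotheses of Theorem~\ref{thm3} are met.

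Theorem~\ref{thm3} then produces, via the procedure of Lemma~\ref{lem31}, a $t$-intersecting family $\s$ of sets of size at most $q$ together with a remainder $\ff'\subset\ff$ satisfying conclusions (i)--(iii) of Lemma~\ref{lem31} with $\aaa = \Sigma_n$. Conclusion~(i) of Lemma~\ref{lem31} is $\ff\setminus\ff'\subset \Sigma_n(\s)$, which is precisely (i) here. Conclusion~(ii) says each $B\in\s$ admits $\ff_B\subset\ff$ with $\ff_B(B)$ being $(\Sigma_n(B),\tau)$-homogeneous, i.e.\ $(\Sigma_n(B),2)$-homogeneous, which is (ii) here. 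Conclusion~(iii) gives $|\ff'|\le \tau^{-q-1}|\aaa| \le 2^{-q}|\Sigma_n| = 2^{-4t\log_2 n}|\Sigma_n| = n^{-4t}|\Sigma_n|$, which is (iii) here. I would also remark that the sets in $\s$ are partial permutations: any $B\in\s$ satisfies $|\ff_B(B)|>0$ by construction (it is a nonempty subfamily of $\ff\subset\Sigma_n$), so $B$ is contained in some permutation, hence a partial permutation; and the $t$-intersecting property of $\s$ is exactly what Theorem~\ref{thm3} adds on top of Lemma~\ref{lem31}.

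The only mild obstacle is purely arithmetical: confirming the inequality $\frac n4 > 2^{13}\log_2(2n)$ for $n>2^{18}$ and chasing the constant $64$ through $r_0\ge 2\tau q$; both are elementary monotonicity checks and I would dispatch them with a single clean bound on $\log_2(2n)$ rather than case analysis. There is no conceptual difficulty here---the substance lies entirely in Theorems~\ref{thmtao} and~\ref{thm3}, which we invoke as black boxes.
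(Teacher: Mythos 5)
Your proposal is correct and essentially identical to the paper's own derivation: Section~\ref{sec34} likewise establishes the $(\tfrac n4,\tfrac n4)$-spreadness of $\Sigma_n$ and then invokes Theorem~\ref{thm3} with $\tau=2$, $r_0=\tfrac n4$, $q=4t\log_2 n$, reducing everything to the checks $\tfrac n4\ge 4q$, $q\le n/4$ and $\tfrac n4>2^{13}\log_2(2n)$. One arithmetical caveat: your auxiliary bound $\log_2(2n)\le \sqrt n/2^6$ is false just above $n=2^{18}$ (there $\log_2(2n)=19>8=\sqrt n/2^6$, and in fact $\tfrac n4=2^{16}<2^{13}\cdot 19$, so the spreadness inequality itself only kicks in around $n\ge 2^{20}$), but this is the same constant-level slack already present in the paper's assertion that $n>2^{18}$ suffices, not a defect of your approach.
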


\begin{proof}[Proof of Theorem~\ref{thmintro2} part 1]
Let $\p\subset \Sigma_n$ be a $t$-intersecting family of permutations. We apply Theorem~\ref{thmpermapprox} to $\p$ and obtain a spread approximation family $\s$ of partial permutations (with $\p = \ff,$ $\p' = \ff'$) that is $t$-intersecting. Assume that $\s$ is non-trivial. If $\frac n4>2^{18} q\log_2 q$ and $\frac n4> t$ then we can apply Theorem~\ref{thmapproxtrivial} with $\epsilon=\frac 12$ to $\Sigma_n$ playing the role of $\aaa$ and $\s$ playing the role of $\s$ to conclude that, for some $t$-element partial permutation $T$,
$$|\p|\le |\p'|+|\Sigma_n[\s]|\le |\p'|+\frac 12 \Sigma_n[T]\le n^{-4t}n!+\frac 12(n-t)!<\frac 23(n-t)!.$$
%that $|\sigma_n[T]|$ is the same for any partial permutation $T$ of size $t$.)
This holds in the assumption $n>2^{20}q\log_2q$, which is implied by $n> 2^{20}q\log_2 n = 2^{22}t \log_2^2 n$.

Next, suppose that $\s$ consists of a single $t$-element partial permutation $T$.
The final step of the proof is to show that either $\p'$ is empty (and so $\p$ is a trivial $t$-intersecting family of size at most $(n-t)!$) or $|\p| \le \frac 23(n-t)!$ holds. Indeed, assume that there is a permutation $\sigma\in \p$ such that $T\not\subset \sigma$. Then $\p\setminus \p'\subset \Sigma[T]\setminus\Sigma'[T]$, where $\Sigma'[T]$ is the set of all permutations that contain $T$ and that do not share elements with $\sigma$ outside $T$. It should be clear that the number of permutations in $\Sigma'(T)$ is at least the number of derangements $[n]\setminus T\to [n]\setminus T$, which is at least $\frac{(n-t)!}e-1$. We conclude that
$$|\p|\le |\Sigma[T]\setminus \Sigma'[T]|+|\p'|\le (1-1/e)(n-t)!+1+n^{-4t} n!\le \frac 23(n-t)!.$$
This concludes the proof.
\end{proof}

\section{Spread approximations for Erd\H os--S\'os}\label{sec4}
In this section, we explore the phenomenon that can be roughly described as follows: if in the statements of Theorem~\ref{thm2},~\ref{thm3} and~\ref{thmpermapprox} and the like we take $\tau$ to be very close to $1$, then we can arrive at a spread approximation $\s$ that is $t$-intersecting while only imposing on $\ff$ that it is $(t-1)$-avoiding.

In this section, we decided to first separately spell out the proof for permutations, then give a general statement, and finally derive the statement for the classical Erd\H os--Sos for sets from the general statement.
\subsection{Erd\H os--S\'os for permutations}\label{sec41}
\begin{thm}\label{thmpermapprox2}
Let $n,t\ge 1$ be some integers satisfying $n\ge 2^{22}$ and $n\ge 2^{10} t^3\log_2 n$. Consider a family $\ff\subset \Sigma_n$ of permutations that avoids intersection $t-1$. Put $q = 32t^2\log_2 n$. Then   there exists  a $t$-intersecting family $\s$ of  partial permutations of size at most $q$  and a `remainder' $\ff'\subset \ff$ such that
\begin{itemize}
    \item[(i)] We have $\ff\setminus \ff'\subset \Sigma_n[\s]$;
    \item[(ii)] for any $B\in \s$ there is a family $\ff_B\subset \ff$ of permutations extending $B$ such that $\ff_B(B)$ is $(\Sigma_n(B),2^{1/8t})$-homogeneous;
    \item[(iii)] $|\ff'|\le n^{-4t}|\Sigma_n|$.
  \end{itemize}
\end{thm}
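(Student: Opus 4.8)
The plan is to run the spread approximation procedure of Lemma~\ref{lem31} with the ambient family $\aaa = \Sigma_n$, with $q = 32t^2\log_2 n$, but crucially with the homogeneity parameter $\tau = 2^{1/(8t)}$ taken extremely close to $1$ rather than $\tau = 2$. Properties (i), (ii), (iii) will then follow essentially verbatim from Lemma~\ref{lem31} together with the $(\frac n4, \frac n4)$-spreadness of $\Sigma_n$ established at the start of Section~\ref{sec34}: for (iii) we need $\tau^{-q-1}|\Sigma_n| \le n^{-4t}|\Sigma_n|$, i.e. $(q+1)\cdot \frac{1}{8t}\log_2\tau^{\,8t} \ge 4t\log_2 n$, which holds comfortably since $\tau^{8t} = 2$ gives $\tau^{-q} = 2^{-q/(8t)} = 2^{-4t\log_2 n} = n^{-4t}$. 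The conditions $n \ge 2^{22}$ and the spreadness prerequisites of Lemma~\ref{lem31} (namely $r_0 = \frac n4 \ge 2\tau q$ and $r_0 > 2^{12}\tau\log_2(2n)$) must be checked against $q = 32t^2\log_2 n$ and the hypothesis $n \ge 2^{10}t^3\log_2 n$; these are routine.

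The heart of the matter — and the step I expect to be the main obstacle — is showing that the resulting spread approximation $\s$ is $t$-\emph{intersecting} even though $\ff$ is only assumed to avoid intersection $t-1$ (not to be $t$-intersecting). The argument of Lemma~\ref{lemtint}/Theorem~\ref{thm3} no longer applies directly: if $S_i, S_j \in \s$ with $|S_i \cap S_j| < t$, the random-partition argument produces $F_i \cup S_i, F_j \cup S_j \in \ff$ with $|(F_i\cup S_i)\cap(F_j\cup S_j)| = |S_i\cap S_j|$, and all we can conclude from $(t-1)$-avoidance is that this common size is \emph{not} $t-1$ — which is vacuous if $|S_i\cap S_j| \le t-2$. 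To get a contradiction one must instead force the intersection to land \emph{exactly} at $t-1$. The idea is to pick, using the near-$1$ homogeneity, not full random sets but to additionally control a few coordinates: starting from $|S_i\cap S_j| = s \le t-1$, one wants to select $F_j$ extending $S_i$ on exactly $t-1-s$ further coordinates matching $\sigma_i := F_i \cup S_i$ and disjoint from $\sigma_i$ elsewhere, so that $|\sigma_i \cap \sigma_j| = s + (t-1-s) = t-1$, contradicting $(t-1)$-avoidance. This is where $\tau = 2^{1/(8t)}$ is essential: prescribing the values of a partial permutation on up to $t$ extra coordinates costs a factor $\tau^{t} \le 2^{1/8}$ in the homogeneity bound rather than $2^t$, so the relevant subfamilies stay spread enough (roughly $(\frac{n}{8t^2}\cdot\text{const})$-spread after all the conditioning) for Theorem~\ref{thmtao} to still apply with $m = \log_2(2n)$, $m\delta = \tfrac12$. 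The factor-$t^2$ and factor-$t^3$ losses in $q$ and in the hypothesis on $n$ trace back precisely to this: $q$ must absorb $t$ conditioning steps each enlarging sets, and the spreadness budget $r_0/\tau^{q} $ must survive.

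Concretely, the sub-lemma replacing Lemma~\ref{lemtint} will: fix $\g_i = \ff_{S_i}(S_i)$ and $\g_j = \ff_{S_j}(S_j)$, both $(\Sigma_n(\cdot),\tau)$-homogeneous hence $\frac{n}{4\tau}$-spread; first delete from $\g_j$ the permutations meeting $S_i \setminus S_j$ (a $\le \tfrac12$ fraction, since $n \ge 2\tau q r_0/r_0 \cdot$\dots via $(2\tau q)$-spreadness), and symmetrically; then, for a carefully chosen set $P$ of $t-1-s$ coordinates outside $S_i \cup S_j$, restrict $\g_j$ to permutations agreeing with a fixed $F_i \in \g_i'$ on $P$ and disjoint from $F_i$ off $S_i \cup S_j \cup P$ — by near-$1$ homogeneity and $t$ small, this still leaves a family that is $r$-spread with $r > 2^{11}\log_2(2n)$ on the remaining ground set. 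Running the two-colour argument of Theorem~\ref{thm2} (now with $k$ replaced by $n$, so $m = \log_2 2n$) then finds $F_i, F_j$ realising a pair of permutations in $\ff$ at intersection exactly $t-1$, the desired contradiction; hence $|S_i \cap S_j| \ge t$ for all $S_i, S_j \in \s$. The one genuinely new combinatorial point to verify carefully is that such a coordinate set $P$ can always be chosen so that $F_i \cup P \cup S_i$ remains a partial permutation and $P$ avoids the images/preimages already used — but since $|S_i \cup S_j \cup P| \le 2q + t \ll n$, there is ample room.
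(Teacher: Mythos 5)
Your proposal follows essentially the same route as the paper: run Lemma~\ref{lem31} with $\tau=2^{1/8t}$ and $q=32t^2\log_2 n$, then upgrade $(t-1)$-avoidance to $t$-intersection of $\s$ by padding $|S_1\cap S_2|$ up to exactly $t-1$ with an extra partial permutation of size $\ell=t-1-|S_1\cap S_2|$, exploiting the near-$1$ homogeneity so the conditioned families stay spread enough for the two-colour argument. The only execution detail to fix is that one cannot first fix $F_i$ and condition $\g_j$ on agreeing with it (disjointness from a fixed full permutation is not reachable by a union bound at spreadness $\sim n/80$); the paper instead chooses a common padding $H$ disjoint from $S_1\cup S_2$ by an averaging argument (expected size of $\g_j(H)$ versus the homogeneity upper bound forces $|\g_j(H)|\ge 0.1\gamma_j|\ff_{S_j}|$ with probability $>1/2$ for each $j$), conditions both families on $H$ symmetrically, and only then two-colours to produce both $F_1$ and $F_2$ simultaneously.
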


\begin{proof}
We apply Lemma~\ref{lem31} to $\ff$ with $\tau = 2^{1/8t}$ and $q = 32t^2\log_2 n$ and immediately get the properties (i)--(iii) listed above. The only (and crucial) thing to verify is the $t$-intersection property of $\s.$ Note that, as such, we do not even have guarantees that the sets in $\s$ have size at least $t$, but then this is of course excluded if $\s$ is indeed $t$-intersecting.

Suppose that there are sets $S_1, S_2 \in \s$ such that $|S_1 \cap S_2| < t-1$. The strategy of the proof is to find a good partial permutation $H$ such that, first, $S_1\cup H$ intersects $S_2\cup H$ in exactly $t-1$ elements and, second, the families $\ff_{S_1}(S_1\cup H)$ and $\ff_{S_2}(S_2\cup H)$  are sufficiently spread so that we could apply the same random coloring argument as, say, in the proof of Theorem~\ref{thmpermapprox}. It is in order to guarantee the existence of such $H$ that we need the approximation to be extremely homogeneous.

For a family $\mathcal W$ and $X\subset Y\subset [n]$ we denote $$\mathcal W(X,Y) = \{A\setminus X: A\in \mathcal W, A\cap Y = X\}.$$

Denote $\ell:=t-1-|S_1 \cap S_2|$ and $s_j = |S_j|,$ $j\in[2]$.
Put $\g_1 = \ff_{S_1}(S_1,S_1\cup S_2)$ and define $\g_2$ analogously. Then we have
$$
|\g_1| \ge |\ff_{S_1}(S_1)| - \sum_{i\in S_2\setminus S_1} |\ff_{S_1}(S_1\cup \{i\})| \ge \Big(1- q\tau \frac{|\Sigma_n(S_1\cup \{i\})|}{|\Sigma_n(S_1)|}\Big)|\ff_{S_1}(S_1)|  \ge 0.9 |\ff_{S_1}|,
$$
and analogously for $\g_2$. In the last inequality we used that $\frac{|\Sigma_n(S_1\cup \{i\})|}{|\Sigma_n(S_1)|}\le \frac 2n$ for $|S_1|\le \frac n2$ and that $20q\tau< 40q< n$ in our assumptions.

Let $H$ be a random $\ell$-element set taken uniformly at random from the family $\mathcal H$ of all partial $\ell$-element permutations  that satisfy $H\cap (S_1\cup S_2) = \emptyset.$ %and, second, $H\cup S_j$ is a partial permutation, $j\in [2].$
We clearly have $|\mathcal H| \le {n\choose \ell} n!/(n-\ell)!$. On the other hand, for any $F \in \g_j$ there are at least ${n - 2q \choose \ell}$ partial permutations $H \in \mathcal H$ such that $H \subset F$, and so for fixed $F$ we have\footnote{We omit simple calculations below}
$$
\Prb [H \subset F] \ge {n - 2q \choose \ell} /|\mathcal H| \ge {n - 2q \choose \ell}{n\choose \ell}^{-1}\frac {(n-\ell)!}{n!}\ge 0.7 \frac {(n-\ell-s_j)!}{(n-s_j)!}.
$$
%{n\choose \ell}^{-1}\frac {(n-\ell)!}{n!} \ge e^{-\frac{2q\ell}{n-3q}} \frac {(n-\ell)!}{n!} \ge 0.8 \frac {(n-\ell)!}{n!}

The last inequality is due to the fact that $n\ge 32qt$ and $\ell<t.$ (To get a rough idea why the last inequality holds, note that both second to last and last expressions in the displayed formula have order $n^{-\ell}$.) In what follows, we denote $\gamma_j =  \frac {(n-\ell-s_j)!}{(n-s_j)!}$ for shorthand.

Combining the last two displayed inequalities, we get that
$$\E |\g_j(H)| \ge 0.7 \gamma_j|\g_j| \ge 0.6 \gamma_j|\ff_{S_j}|.$$
On the other hand, using homogeneity of $\ff_{S_j}(H),$ for any $H \in \mathcal H$ we have
$$
|\g_j(H)| \le |\ff_{S_j}(H)| \le \tau^{\ell} \gamma_j|\ff_{S_j}| < 1.1 \gamma_j |\ff_{S_j}|.
$$
Let $p_j$ be the probability that $|\g_j(H)| < 0.1 \gamma_j|\ff_{S_j}|$. We clearly have
$$
\E |\g_j(H)| < p_j \cdot 0.1 \gamma_j |\ff_{S_j}| + (1-p_j) \cdot 1.1 \gamma_j|\ff_{S_j}|.
$$
Substituting the lower bound for $\E |\g_j(H)|$, the inequality above implies the following inequality:
$$
0.6 < 0.1 p_j + 1.1 (1- p_j).
$$
From here, we get that $p_j < 0.5$. Using union bound, there exists $H \in \mathcal H$ such that $|\g_j(H)| \ge 0.1 \gamma_j|\ff_{S_j}|$ for $j = 1$ and $2$ simultaneously.

Denote $\mathcal B_j = \g_j(H)$ ($\subset \ff_{S_j}(H \cup S_j)$), then for any set $X$ of size $x$ and disjoint from $H \cup S_j$ we have
\begin{multline*}
|\mathcal B_j(X)| \le |\ff_{S_j}(S_j\cup H\cup X)|\le \tau^{\ell+x} \frac{(n-\ell-x-s_j)!}{(n-s_j)!}|\ff_{S_j}| \\ \le  \tau^{\ell+x} \frac{(n-\ell-x-s_j)!}{(n-s_j)!}10 \gamma_j^{-1}|\mathcal B_j| =  10 \tau^{\ell+x} \frac{(n-\ell-x-s_j)!}{(n-\ell-s_j)!}|\mathcal B_j|.
    \end{multline*}
Estimating $10 \tau^{\ell +x} \le (10 \tau^{\ell + 1})^x$ and applying a simple inequality $(a-b)! \le (a/e)^{-b} a!$ we get that the family $\mathcal B_j$ is $r$-spread for $r>\frac {n-\ell-s_j}{40\tau^{\ell+1}}>\frac n{80}$. % Note that $10\tau^{\ell+1}<20$ and recall that  $\Sigma_n(B)$ is $\frac n4$-spread for any $B$ of size at most $n/4$. Therefore, $\mathcal B_j$ is $\frac n{80}$-spread.
Provided that $\frac n{80}\ge2^{11}\log_2(2n)$, which is valid for $n\ge 2^{22}$, we can employ the same argument as in several proofs before. We randomly color the set $[n]^2\setminus (S_1\cup S_2\cup H)$ and, using Theorem~\ref{thmtao}, get that there is $B_j \in \mathcal B_j$, $j = 1, 2$, such that $B_1\cap B_2 = \emptyset$. But then the permutations $F_j = B_j \sqcup S_j \sqcup H$ belong to $\ff$ and intersect in exactly $t-1$ elements. This is a contradiction, and we conclude that $\s$ is $t$-intersecting.
\end{proof}

\subsection{Proof of Theorem~\ref{thmintro2} part 2}
Let $\p\subset \Sigma_n$ be the largest family of permutations that avoids intersection $t-1$. We apply Theorem~\ref{thmpermapprox2} to $\p$ and obtain a spread approximation family $\s$ of partial permutations (with $\p = \ff,$ $\p' = \ff'$) that is $t$-intersecting.  Assume that $\s$ is non-trivial. If $\frac n4>2^{18} q\log_2 q$ and $\frac n4> t$ then we can apply Theorem~\ref{thmapproxtrivial} with $\epsilon = 1/2$ to $\Sigma_n$ playing the role of $\aaa$ and $\s$ playing the role of $\s$ to conclude that, for any $t$-element partial permutation $T$,
$$|\p|\le |\p'|+|\Sigma_n(\s)|\le |\p'|+\frac 12 |\Sigma_n(T)|\le n^{-4t}n!+\frac 12(n-t)!<\frac 23(n-t)!.$$
That is, $\p$ is not the largest $t$-intersecting family of permutations, a contradiction. Consequently, $\s$ consists of a single $t$-element partial permutation $T$, provided $n>2^{20}q\log_2q$, which is implied by $n\ge 2^{20}q\log_2 n = 2^{25}t^2 \log_2^2 n$. To apply Theorem~\ref{thmpermapprox2}, we needed $n\ge 2^{22},$ which is implied by the previous inequality, and $n\ge 2^{10}t^3\log_2 n.$

The final step of the proof is to show that $\p'$ is empty. Reordering the elements, we may assume that $T$ is $Id_{[t]}$, i.e., a partial permutation $\{(1,1),(2,2),\ldots,(t,t)\}$.   Assume that there is a permutation $\pi\in \p$ such that $\pi\not\supset Id_{[t]}$. We analyze this situation in the following proposition.
\begin{prop}\label{55}
Let $1 \le t \le n/4$ and $n\ge 10$.
Let $\pi \in \Sigma_n$ be a permutation such that $\pi\not \supset  Id_{[t]}$. Let us put  $\g_i:=\{\sigma \in \Sigma_n:\sigma\supset Id_{[t]}, \ |\sigma\cap \pi| = i\}$. Then $\g_{t-1}$ has size  at least $\frac 14 t^{-t} (n-t)!$.
\end{prop}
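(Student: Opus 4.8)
The plan is to count permutations $\sigma \supset Id_{[t]}$ that agree with $\pi$ in exactly $t-1$ places by first deciding \emph{which} $t-1$ places they agree in, then counting derangement-type completions. Since $\pi \not\supset Id_{[t]}$, there is some $a \in [t]$ with $\pi(a) \neq a$. Partition $[t] = A \sqcup B$ where $A = \{x \in [t] : \pi(x) = x\}$ is the set of fixed points of $\pi$ inside $[t]$; then $a \in B$, so $|B| \ge 1$. Any $\sigma \supset Id_{[t]}$ automatically satisfies $\sigma(x) = \pi(x)$ for all $x \in A$, so $|\sigma \cap \pi| \ge |A|$ always, and the ``free'' agreements happen on the $n-t$ coordinates outside $[t]$ together with the coordinates in $B$ (where $\sigma$ fixes the point but $\pi$ does not, so $\sigma$ and $\pi$ disagree there). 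Thus the agreements of $\sigma$ with $\pi$ beyond the forced $|A|$ come only from coordinates in $[n]\setminus[t]$.

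The key step is to build many such $\sigma$ with exactly $t-1$ agreements. First I would choose a set $Y \subset [n]\setminus [t]$ of the appropriate size on which $\sigma$ will agree with $\pi$: we need exactly $t-1-|A|$ more agreements, so take $|Y| = t-1-|A| =: c$, where $0 \le c \le t-1$ (note $c \le t-1$ since $|A| \ge 1$... actually $|A|$ could be $0$, giving $c = t-1$; in all cases $c \le t-1 < n/4$). There are $\binom{n-t - |Z|}{c}$ ways to pick $Y$ avoiding a bounded bad set, but more simply at least $\binom{n-2t}{c} \ge (t^{-1}(n-2t))^c \cdot \frac{1}{c!}\cdot c! \ge \big(\tfrac{n-2t}{t}\big)^c / \text{(lower order)}$ — I will just use the crude bound $\binom{n-2t}{c} \ge \big(\tfrac{n-2t}{c}\big)^c \ge \big(\tfrac{n}{4t}\big)^c$ using $n\ge 4t$ and $c\le t$. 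Having fixed $\sigma(y) = \pi(y)$ for $y\in Y$ and $\sigma(x)=x$ for $x\in[t]$, I must complete $\sigma$ on the remaining $n-t-c$ coordinates of $[n]\setminus([t]\cup Y)$ to a bijection onto the remaining $n-t-c$ values $[n]\setminus([t]\cup \pi(Y))$, with the constraint that $\sigma(x)\neq\pi(x)$ on every remaining coordinate $x\notin A$ where $\pi(x)$ still lies in the target value set. This is a restricted permanent / derangement count: the number of completions is at least the number of full derangements of an $(n-t-c)$-set minus lower-order corrections coming from the mismatch between the ``forbidden value'' pattern and a clean derangement, but standard inclusion–exclusion (or the permanent lower bound $\ge (n-t-c)!/e \cdot (1-o(1))$ via Bregman/Bonferroni) gives at least $\tfrac12 (n-t-c)! \ge \tfrac12 (n-t-c)!$ valid completions for $n \ge 10$.

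Putting the two counts together, $|\g_{t-1}| \ge \binom{n-2t}{c}\cdot \tfrac12 (n-t-c)! \ge \tfrac12 \big(\tfrac{n}{4t}\big)^c (n-t-c)!$, and one checks $\big(\tfrac{n}{4t}\big)^c (n-t-c)! \ge \tfrac12 t^{-c}(n-t)!$ since $(n-t-c)! = (n-t)! / \big((n-t)(n-t-1)\cdots(n-t-c+1)\big) \ge (n-t)!/(n-t)^c$ and $\big(\tfrac{n}{4t}\big)^c / (n-t)^c = \big(\tfrac{n}{4t(n-t)}\big)^c \ge (4t)^{-c}$ for $n\ge 4t$, which combined with absorbing constants yields the claimed $\tfrac14 t^{-t}(n-t)!$ (bounding $t^{-c}\ge t^{-t}$ and the various constant factors below $\tfrac14$). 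The main obstacle is the middle step: giving a clean lower bound on the number of valid completions of the partial permutation subject to the $\sigma\neq\pi$ constraints on the as-yet-unassigned coordinates. The cleanest route is to observe that the forbidden structure is a partial permutation matrix of size at most $n-t-c$ inside the $(n-t-c)\times(n-t-c)$ free block, so the count is at least $D_{n-t-c} \ge \tfrac12(n-t-c)!$ for $n-t-c \ge 3$ (derangement numbers), where extra forbidden positions only decrease the permanent but it stays $\ge \tfrac12(n-t-c)!$ because removing forbidden cells from a full derangement matrix pattern changes the permanent by a factor bounded away from $0$; alternatively one uses the simple Bonferroni bound $\ge (n-t-c)! - (n-t-c)(n-t-c-1)! = 0$ is too weak, so I will instead argue directly that fixing $\sigma$ to equal a uniformly random derangement of $[n]\setminus[t]$ restricted to avoid the bad coordinates happens with probability $\ge \tfrac12$, using that at most $t$ coordinates carry a ``wrong'' forbidden value and a random derangement avoids each with probability $1 - o(1)$.
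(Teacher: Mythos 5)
Your construction is the same as the paper's: choose the set $Y$ of agreement coordinates outside $[t]$ (a $c$-subset of the $y>t$ with $\pi(y)>t$, $c=t-1-|A|$), complete by a derangement-type count, and note that distinct $Y$ produce disjoint sets of permutations, so the contributions add. The structural part is fine, including the observation that the forbidden pattern in the completion is a partial permutation matrix, so the number of completions is at least the derangement number $D_{n-t-c}$. What fails is the quantitative bookkeeping at the end. A minor issue first: $D_M\ge \tfrac12 M!$ is false ($D_M/M!\to 1/e<1/2$); you should use $D_M\ge M!/e-1\ge\tfrac14 M!$, which is what the paper does. The serious issue is that your explicit chain only yields $|\g_{t-1}|\ge \tfrac12(4t)^{-c}(n-t)!$, and $(4t)^{-c}$ is \emph{not} bounded below by a constant times $t^{-t}$ once $c$ is close to $t$: you would need $2\,t^{\,t-c}\ge 4^{c}$, which already fails for $t=3$, $c=2$ (a permutation $\pi$ with no fixed point in $[3]$ gives $\tfrac12\cdot 12^{-2}=\tfrac1{288}<\tfrac1{108}=\tfrac14\cdot 3^{-3}$), and fails exponentially badly for $c=t-1$ and large $t$. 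The loss comes from bounding $\binom{n-2t}{c}$ by $(n/(4t))^{c}$ and $(n-t-c)!$ by $(n-t)!/(n-t)^{c}$ \emph{separately}: this throws away the factor $1/c!$ hidden in the binomial coefficient and replaces it by the much smaller $(4t)^{-c}$.

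The repair is exactly the computation in the paper: keep the factorial and estimate the product of the two counts jointly,
$$\binom{n-t-m}{s}(n-t-s)! \;=\; \frac{(n-t)!}{s!}\prod_{i=0}^{m-1}\frac{n-t-s-i}{n-t-i}\;\ge\;\frac{(n-t)!}{s!}\,e^{-\frac{sm}{n-t-s-m}}\;\ge\;\frac{(n-t)!}{s!\,e^{s}},$$
where $m\le t$ is the number of excluded coordinates, and the last step uses $n\ge 4t$ so that $n-t-s-m\ge m$. Then $s!\,e^{s}\le (s+1)^{s+1}\le t^{t}$ since $s+1\le t$, and together with the factor $\tfrac14$ from the derangement count this gives the stated $\tfrac14 t^{-t}(n-t)!$. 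As written, your estimates do not establish the proposition.
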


\begin{proof} In this proof, it is again convenient for us to identify permutations and the corresponding sets in ${[n]^2\choose n}.$ Let us put %$N_i:= |\g_i|$ and %Denote the number of permutations $\sigma$ such that $\sigma|_{[t]} = Id$ and $\sigma \pi^{-1}$ has exactly $f$ fixed points by $N_{f}$ (it also depends on $n, t$ and $\pi$).
$m = t-|[t]^2\cap \pi|$, $m'= t-|Id_{[t]}\cap \pi|$ %Put $W \subset [t]$ be the set of indices $i$ such that $\pi(i) \neq i$. Denote $m = |W|$ and note $|\pi\cap Id_{[t]}| = t-m$.
and define a partial permutation $W = [t+1,n]^2\cap \pi$. Note that $m'\ge m$ and $|W|= n-t+m$. For a partial permutation $S\subset W$,  % For a set of indices $S \subset [t+1, n] \setminus \pi^{-1}(T)$
let $\mathcal A_S \subset \Sigma_n$ be the set of permutations $\sigma$ such that $Id_{[t]}, S\subset \sigma.$ % $\sigma(i) = \pi(i)$ for $i \in S$ and $\sigma(i) = i$ for $i\in [t]$.
Clearly, $|\mathcal A_S| = (n-t-|S|)!$. For a given set $S$ of size $s,$ $s\le t$, and any $\sigma \in \mathcal A_S$ we have $|\sigma\cap \pi| \ge t-m' + s$, and in order to have equality, $\sigma$ and $\pi$ should share no common elements outside $S$ and $[t]^2$. Let $\mathcal A'_S\subset \mathcal A_S$ be the family of permutations $\sigma \in \mathcal A_S$ that satisfy $|\sigma\cap \pi| = t-m'+s$. The number of permutations in $\mathcal A'_S$ is at least the number of derangements of a set of size $n-t-s$, which is at least $\frac{(n-t-s)!}e-1\ge \frac 14(n-t-s)!$ by our assumptions on $n,t,s$. Moreover, $\mathcal A'_S$ are pairwise disjoint for different $s$-element $S$ and $\mathcal A'_S\subset \g_{t-m'+s}$. Therefore,
\begin{align*}|\g_{t-m'+s}|\ge \sum_{S\in {W\choose s}}|\aaa'_S| \ge& \frac 14{n-t-m\choose s}(n-t-s)!\\ \ge&  \frac 14\frac{(n-t)!}{s!}/\prod_{i=0}^{m-1}\frac{n-t-i}{n-t-s-i}\\ \ge& \frac 14\frac{(n-t)!}{s!} e^{-\frac{sm}{n-t-s-m}}\\
\ge& \frac 14\frac{(n-t)!}{s!e^{s}}\ge \frac 14\frac{(n-t)!}{(s+1)^{s+1}}, \end{align*}
where in the second to last inequality we used $m,s\le t$, implying $n-t-s-m\ge m,$ and in the last inequality we used that $s!e^s\le (s+1)^{s+1}$ for any positive integer $s$ (it is easy to verify by induction). Substituting $s = m'-1$ and using $m'\le t$ again, we get that $|\g_{t-m'+s}|\ge \frac 14 t^{-t}(n-t)!$. \end{proof}

%the number of derangements for a set of size $n-t-s$, which is at least  and  Moreover, if $|\sigma\cap \pi| = t-m + s$ then $\sigma \in \mathcal A_S$ for a unique set $S$ of size $s$. % \subset [t+1, n] \setminus \pi^{-1}(T)$ with $|S| = s$.
%Thus, the Bonferroni inequality implies that
%\begin{align*}
%N_{t+s-m} \ge {n - t - m \choose s} (n-t-s)!- (s+1){n - t - m \choose s+1}(n-t-s-1)!  =\\
%=  m\frac{(n-t-1)!}{s!} \frac{(n-t-m)!}{(n-t-m-s)!} \frac{(n-t-s-1)!}{(n-t-1)!}  \ge  m\frac{(n-t-1)!}{s!} e^{-\frac{s m}{n-t-s}}.
%\end{align*}
%After plugging in $s = m-1$, using $m \le t$ and $t \le n/3$ and some simplifications we obtain the desired bound.

It should be clear that  $\p\setminus \p'\subset \Sigma_n[T]\setminus\g_{t-1}$. The proposition implies that
$$|\p|\le |\Sigma_n[T]|- |\g_{t-1}|+|\p'|\le (n-t)!-\frac 14 t^{-t} (n-t)!+n^{-4t} n!< (n-t)!.$$
This concludes the proof.

\subsection{Erd{\H o}s--S\'os for general families}

In this section, we are going to generalize the argument from Section \ref{sec41} from permutations to arbitrary families. To make it work, we need to impose an additional `regularity' condition on the families in question.

Given a family $\aaa \subset 2^{[n]}$ and $t \ge 0$, the {\it $t$-th shadow of $\aaa$} is the family $\hh_t \aaa \subset {[n] \choose t}$ defined as follows:
$$\hh_t \aaa = \bigcup_{X\in\aaa} {X\choose t}.$$
%consisting of all $t$-element sets $T \subset [n]$ such that there exists $A \in \aaa$ such that $T \subset A$.
Denote $\hh_{\le t} \aaa = \bigcup_{i = 0}^t \hh_i \aaa$.
We say that a $k$-uniform family $\aaa \subset {[n] \choose k}$ is {\it $(t, q, \varepsilon, \theta)$-regular} if for any $\ell \le t$ and for any set $S \in \hh_{\le q}\aaa$ the following two conditions are satisfied:

\begin{itemize}
    \item [(i)] We have $|\hh_\ell(\aaa(S))| \ge (1-\varepsilon) |\hh_\ell(\aaa)|$.
    \item[(ii)] For a uniformly random $\underline{H}$ in  $ \hh_\ell(\aaa(S))$ we have
    \begin{equation}\label{aaaa}
        \Prb\big(|\aaa(S \cup \underline{H})| \ge \theta \E|\aaa(S \cup \underline{H})|\big)\ge 1-\varepsilon.\end{equation}
%    \item [(ii)] The number of sets $H \in \hh_l(\aaa(S))$ such that
%    \begin{equation}\label{aaaa}
%        |\aaa(S \cup H)| \ge \frac{\theta |\aaa(S)|}{|\hh_l(\aaa(S))|} {k - |S| \choose l}
%    \end{equation}
%    is at least $(1-\varepsilon) |\hh_l(\aaa(S))|$.
\end{itemize}

\begin{thm}\label{genES}
Let $n \ge k \ge t \ge 1$. Let $\varepsilon \in (0, 0.01]$, $\theta \in (0, 1]$, $r > 1$ and $q \ge t$ satisfy the following inequality:
$$
r \ge \max \left\{2^{15} \log k, 2^{17} q \log q, q/(\varepsilon \theta)\right\}.
$$
Let $\aaa \subset {[n] \choose k}$ be a $(t, q,\varepsilon, \theta)$-regular and $(r, 2q)$-spread family and let $\ff \subset \aaa$ be a subfamily such that $|F \cap F'| \neq t-1$ for any $F, F' \in \ff$. Then there exists a $t$-element set $T \subset [n]$ such that either
$$
|\ff \setminus \aaa[T]| \le e^{- \frac{ \varepsilon \theta q}{t}} |\aaa|,
$$
or we can bound the size of $\ff:$ $$
|\ff| \le e^{- \frac{ \varepsilon \theta q}{t}} |\aaa| + \frac{2^{17} q \log q}{r} |\aaa(T)|.
$$

\end{thm}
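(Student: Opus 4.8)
The plan is to follow the blueprint of Theorem~\ref{thmpermapprox2}, replacing the explicit permutation computations by the abstract $(t,q,\varepsilon,\theta)$-regularity hypotheses, and then to finish off with Theorem~\ref{thmapproxtrivial} exactly as in Section~\ref{sec41}. First I would run the spread approximation procedure of Lemma~\ref{lem31} on $\ff$ with the homogeneity parameter $\tau = 2^{1/(8t)}$ (so that $\tau^{t}\le \sqrt 2$, mirroring the permutation case) and the given $q$, obtaining a family $\s$ of sets of size $\le q$, the remainder $\ff'$ with $|\ff'|\le \tau^{-q-1}|\aaa|\le e^{-\varepsilon\theta q/t}|\aaa|$ (using $\tau^{q}=2^{q/8t}\ge e^{\varepsilon\theta q/t}$ since $r\ge q/(\varepsilon\theta)$ forces $\varepsilon\theta\le 1/8$ after absorbing constants), and for each $B\in\s$ a subfamily $\ff_B$ with $\ff_B(B)$ being $(\aaa(B),\tau)$-homogeneous.

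The heart of the matter is to show $\s$ is $t$-intersecting. Suppose $S_1,S_2\in\s$ with $|S_1\cap S_2|=t-1-\ell$ for some $\ell\ge 1$; set $s_j=|S_j|$ and $\g_j=\ff_{S_j}(S_j,S_1\cup S_2)$. As in Theorem~\ref{thmpermapprox2}, the $(r,2q)$-spreadness of $\aaa$ and homogeneity give $|\g_j|\ge 0.9|\ff_{S_j}|$ after deleting the $O(q)$ bad elements, using $r\ge$ (constant)$\cdot q$. Now I would pick a random $H$ uniformly from $\hh_\ell(\aaa(S_1\cup S_2))$ (this is where regularity condition (i) is used to ensure this shadow is almost all of $\hh_\ell\aaa$, so that a set $F\in\g_j$ contains such an $H$ with probability at least roughly $(1-\varepsilon)\E|\aaa(S_j\cup\underline H)|/|\aaa(S_j)|$ by a double-counting argument over $\hh_\ell(\aaa(S_j))$), combined with regularity condition (ii) applied at $S=S_j$ to guarantee that with probability $\ge 1-\varepsilon$ the fibre $|\aaa(S_j\cup H)|$ is not much smaller than its mean. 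A first-moment / Markov argument identical to the $p_j<1/2$ computation in Theorem~\ref{thmpermapprox2} then yields a single $H$, disjoint from $S_1\cup S_2$, a partial extension, such that $|\g_j(H)|\ge$ (constant)$\cdot \E|\aaa(S_j\cup\underline H)|/|\aaa(S_j)|\cdot|\ff_{S_j}|$ simultaneously for $j=1,2$, while the homogeneity upper bound keeps $|\g_j(H)|\le \tau^{\ell+s}\cdots$. Setting $\bb_j=\g_j(H)\subset\aaa(S_j\cup H)$, one bounds $|\bb_j(X)|/|\bb_j|$ for $X$ disjoint from $S_j\cup H$ by $\tau^{|X|}\cdot(\text{const})\cdot|\aaa(S_j\cup H\cup X)|/|\aaa(S_j\cup H)|$, and the $(r,2q)$-spreadness of $\aaa$ (applied at the set $S_j\cup H$, whose size is $\le 2q$) shows $\bb_j$ is $r'$-spread with $r'\ge r/(\text{const}\cdot\tau^{2q})$; since $r\ge 2^{15}\log k$ this is $>2^{11}\log_2(2k)$. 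A two-colouring of $[n]\setminus(S_1\cup S_2\cup H)$ and Theorem~\ref{thmtao} then produce disjoint $B_1\in\bb_1$, $B_2\in\bb_2$, whence $B_1\sqcup S_1\sqcup H$ and $B_2\sqcup S_2\sqcup H$ lie in $\ff$ and intersect in exactly $|S_1\cap S_2|+\ell=t-1$ elements, contradicting $(t-1)$-avoidance. Hence $\s$ is $t$-intersecting.

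Finally, with $\s$ a $t$-intersecting family of sets of size $\le q$ inside the $(r,2q)$-spread family $\aaa$ (in particular $(r,q)$-spread), I apply the dichotomy of Theorem~\ref{thmapproxtrivial} with $\varepsilon_0 := 2^{17}q\log q/r\le 1$: if $\s$ is trivial, i.e.\ $\bigcap_{B\in\s}B$ contains a $t$-set $T$, then $\ff\setminus\aaa(T)\subset \ff'$, giving the first alternative; if $\s$ is non-trivial, Theorem~\ref{thmapproxtrivial} provides a $t$-set $T$ with $|\aaa(\s)|\le \varepsilon_0|\aaa(T)|$, and since $\ff\setminus\ff'\subset\aaa(\s)$ we get $|\ff|\le |\ff'|+|\aaa(\s)|\le e^{-\varepsilon\theta q/t}|\aaa|+\tfrac{2^{17}q\log q}{r}|\aaa(T)|$, the second alternative. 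The main obstacle I anticipate is Step~2: getting the probabilistic estimate on $\Prb[H\subset F]$ clean enough, since unlike the permutation case where $\E|\aaa(S\cup\underline H)|$ has a closed form, here one must juggle the regularity parameters $\varepsilon,\theta$ and verify that the loss factors (the $1-\varepsilon$ from (i), the $\theta$ and $1-\varepsilon$ from (ii), and the homogeneity slack $\tau^\ell$) still leave $\bb_j$ spread enough; keeping track of which ``ambient size'' $|\aaa(\cdot)|$ appears in each inequality is the bookkeeping that needs care.
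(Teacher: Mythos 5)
Your overall architecture is the same as the paper's: run the spread approximation procedure of Lemma~\ref{lem31}, prove that the resulting $\s$ is $t$-intersecting by a random-extension-plus-two-colouring argument modelled on Theorem~\ref{thmpermapprox2}, and then split into the trivial/non-trivial cases via Theorem~\ref{thmapproxtrivial} with $\varepsilon_0=2^{17}q\log q/r$. The final deduction is exactly right. But there is a genuine gap in the middle step, and it sits precisely at the point you flag as ``bookkeeping that needs care'': your choice $\tau=2^{1/(8t)}$ does not work for general $\theta$, and the first-moment argument you call ``identical to the $p_j<1/2$ computation'' breaks down.

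Here is why. In the permutation case the fibre size $|\Sigma_n(S\cup H)|$ is \emph{deterministic}, so the threshold $0.1\gamma_j|\ff_{S_j}|$, the expectation $0.6\gamma_j|\ff_{S_j}|$ and the pointwise bound $1.1\gamma_j|\ff_{S_j}|$ are all multiples of the same constant, and $0.6<0.1p+1.1(1-p)$ gives $p<0.5$. In the general setting the comparison quantity $\eta_j=\frac{|\aaa(S_j\cup \underline H)|}{|\aaa(S_j)|}|\ff'_j|$ is itself a random variable, and all the regularity hypothesis gives you is $\Prb(\eta_j\ge\theta\E\eta_j)\ge1-\varepsilon$. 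With $\tau^{\ell}\le 2^{1/8}$ your pointwise upper bound is only $\xi_j\le(1+c)\eta_j$ for an absolute constant $c\approx0.2$. Writing $p=\Prb(\xi_j\le0.5\eta_j)$ and using $\E\xi_j=\E\eta_j$, the identity $0=\E(\eta_j-\xi_j)\ge0.5\,\E(\eta_j\mathbf{1}_{\xi_j\le0.5\eta_j})-c\,\E\eta_j$ only bounds the $\eta_j$-\emph{mass} of the bad event by $2c\,\E\eta_j$; converting mass to probability via the $\theta$-lower-deviation bound yields $p\le\varepsilon+2c/\theta$, which is vacuous as soon as $\theta<2c$. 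Since the theorem allows any $\theta\in(0,1]$ (and, e.g., Proposition~\ref{example22} already needs $\theta=0.5$), the union bound over $j=1,2$ fails. The paper's fix is to take $\tau=1+1.5\varepsilon\theta t^{-1}$, so that $\tau^{\ell}(1-\tau q/r)^{-1}\le1+4\varepsilon\theta$ (using $q\le\varepsilon\theta r$); then the excess in the pointwise bound is $O(\varepsilon\theta)$ rather than $O(1)$, the same computation gives $0.5(p-\varepsilon)\theta\le4\varepsilon\theta$, hence $p\le9\varepsilon$, and the two bad events can be union-bounded. The remainder estimate $|\ff'|\le\tau^{-q-1}|\aaa|\le e^{-\varepsilon\theta q/t}|\aaa|$ still holds with this smaller $\tau$, so nothing is lost. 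Two smaller points: you should sample $H$ uniformly from $\hh_\ell(\aaa(S_j))$ (using regularity condition (ii) at $S=S_j$, which has size $\le q$) and pass to $\hh_\ell(\aaa)$ via condition (i), rather than from $\hh_\ell(\aaa(S_1\cup S_2))$, which may be empty and in any case lies outside the scope of the regularity hypothesis since $|S_1\cup S_2|$ can exceed $q$; and the lower bound on $|\g_j(H)|$ must be against the \emph{realized} fibre $|\aaa(S_j\cup H)|$, not its expectation, since that is what the $(r,2q)$-spreadness bound for $\bb_j=\g_j(H)$ is measured against.
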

The conditions in this theorem are quite technical. The payoff, however, is that many interesting `ambient' families $\aaa$ satisfy these conditions. We will give several concrete examples in Section~\ref{sec5}.

The proof relies on the following technical, but crucial, lemma which, similarly to the argument in Section \ref{sec41}, allows us to pass from a family that avoids intersection $(t-1)$ to a $t$-intersecting spread approximation.

\begin{lem}\label{ess}
Let $n \ge k \ge t \ge 1$.
Let $\varepsilon \in (0, 0.01], \theta \in (0, 1]$, $\tau \ge 1$, $r > 2^{15} \log k$ and $q \ge 1$ satisfy $
q \le \varepsilon \theta r$ and $\tau^t \le 1 + 2\varepsilon \theta$.

Let $\aaa \subset {[n] \choose k}$ be a $(t, q, \varepsilon, \theta)$-regular and $(r, 2q)$-spread family. Let $S_1, S_2$ be sets of size at most $q$ such that $|S_1 \cap S_2| \le t-1$ and for $i = 1, 2$ let $\ff_i \subset \aaa(S_i)$ be a $(\aaa(S_i), \tau)$-homogeneous family. Then there exist $F_1 \in \ff_1$ and $F_2 \in \ff_2$ such that $|(F_1 \cup S_1) \cap (F_2 \cup S_2)| = t-1$.
\end{lem}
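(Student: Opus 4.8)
The plan is to run the scheme behind Theorem~\ref{thmpermapprox2}: there the rigid identity $|\Sigma_n(S\cup H)|/|\Sigma_n(S)|=(n-|S|-|H|)!/(n-|S|)!$ made a random common ``core'' $H$ behave uniformly, and for a general ambient family the two conditions of $(t,q,\varepsilon,\theta)$-regularity are exactly what replace this. Put $\ell:=t-1-|S_1\cap S_2|\ge0$ (so $\ell\le t-1<k$). It will suffice to produce $F_1\in\ff_1$, $F_2\in\ff_2$ together with a common $\ell$-subset $H$ of $F_1$ and $F_2$ with $H\cap(S_1\cup S_2)=\emptyset$, $F_1\cap S_2=F_2\cap S_1=\emptyset$ and $(F_1\setminus H)\cap(F_2\setminus H)=\emptyset$, for then $(F_1\cup S_1)\cap(F_2\cup S_2)=H\sqcup(S_1\cap S_2)$ has size $\ell+(t-1-\ell)=t-1$. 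A preliminary reduction: for $i=1,2$ replace $\ff_i$ by $\g_i:=\{F\in\ff_i:F\cap S_{3-i}=\emptyset\}$. Since $\aaa(S_i)$ is $r$-spread (as $|S_i|\le q\le2q$) and $\ff_i$ is $(\aaa(S_i),\tau)$-homogeneous, each element of $S_{3-i}$ lies in at most $\tau\frac{|\aaa(S_i\cup\{x\})|}{|\aaa(S_i)|}|\ff_i|\le\tau r^{-1}|\ff_i|$ members of $\ff_i$, so $|\ff_i\setminus\g_i|\le q\tau r^{-1}|\ff_i|\le\varepsilon\theta\tau|\ff_i|\le2\varepsilon\theta|\ff_i|$ using $q\le\varepsilon\theta r$ and $\tau\le\tau^t\le1+2\varepsilon\theta$; thus $|\g_i|\ge(1-2\varepsilon\theta)|\ff_i|$.

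The heart of the proof is the choice of $H$. Fix $j\in\{1,2\}$, and let $\underline H$ be uniform in $\hh_\ell(\aaa(S_j))$; note $\underline H$ is automatically disjoint from $S_j$, and $\hh_\ell(\aaa(S_j))\neq\emptyset$ by condition~(i) (applied to $S_j\in\hh_{\le q}\aaa$, $\ell\le t$) together with $\hh_\ell(\aaa)\neq\emptyset$. Every $\ell$-subset of a set of $\g_j\subseteq\aaa(S_j)$ lies in $\hh_\ell(\aaa(S_j))$, so double counting gives both
\begin{equation*}
\sum_{H\in\hh_\ell(\aaa(S_j))}|\g_j(H)|=\binom{k-|S_j|}{\ell}|\g_j|\qquad\text{and}\qquad\sum_{H\in\hh_\ell(\aaa(S_j))}|\aaa(S_j\cup H)|=\binom{k-|S_j|}{\ell}|\aaa(S_j)|;
\end{equation*}
writing $\mu_j:=\binom{k-|S_j|}{\ell}/|\hh_\ell(\aaa(S_j))|$, these say $\E|\g_j(\underline H)|=\mu_j|\g_j|$ and $\E|\aaa(S_j\cup\underline H)|=\mu_j|\aaa(S_j)|$. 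Homogeneity of $\ff_j$ gives the pointwise bound $|\g_j(H)|\le|\ff_j(H)|\le\tau^\ell\frac{|\aaa(S_j\cup H)|}{|\aaa(S_j)|}|\ff_j|$, so $s_j(H):=\tau^\ell\frac{|\aaa(S_j\cup H)|}{|\aaa(S_j)|}|\ff_j|-|\g_j(H)|\ge0$, and subtracting the two identities, $\E\,s_j(\underline H)=\mu_j(\tau^\ell|\ff_j|-|\g_j|)\le4\varepsilon\theta\,\mu_j|\ff_j|$ by $\tau^t\le1+2\varepsilon\theta$ and $|\g_j|\ge(1-2\varepsilon\theta)|\ff_j|$. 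Hence, by Markov, $\Prb\big(s_j(\underline H)\ge\tfrac\theta2\mu_j|\ff_j|\big)\le8\varepsilon$, while condition~(ii) gives $\Prb\big(|\aaa(S_j\cup\underline H)|\ge\theta\mu_j|\aaa(S_j)|\big)\ge1-\varepsilon$. When both of these (probability-$\ge1-9\varepsilon$) events occur one has $s_j(H)<\tfrac12\frac{|\aaa(S_j\cup H)|}{|\aaa(S_j)|}|\ff_j|$, so
\begin{equation*}
|\g_j(H)|=\tau^\ell\frac{|\aaa(S_j\cup H)|}{|\aaa(S_j)|}|\ff_j|-s_j(H)>\tfrac12\frac{|\aaa(S_j\cup H)|}{|\aaa(S_j)|}|\ff_j|>0 .
\end{equation*}
The set of $H\in\hh_\ell(\aaa(S_j))$ with this property thus has cardinality at least $(1-9\varepsilon)|\hh_\ell(\aaa(S_j))|\ge(1-10\varepsilon)|\hh_\ell(\aaa)|$ by condition~(i); since $\varepsilon\le0.01$, the two such subsets of $\hh_\ell(\aaa)$ (one per $j$) intersect, and we fix a common $H$, which is then disjoint from $S_1\cup S_2$.

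For this $H$ set $\mathcal B_j:=\g_j(H)$, a nonempty family of sets of size $k-|S_j|-\ell$, all lying in $V:=[n]\setminus(S_1\cup S_2\cup H)$. By the last two displays $|\mathcal B_j|\ge\tfrac12\frac{|\aaa(S_j\cup H)|}{|\aaa(S_j)|}|\ff_j|$, while for $Y\subseteq V$, homogeneity of $\ff_j$ gives $|\mathcal B_j(Y)|\le|\ff_j(H\cup Y)|\le\tau^{\ell+|Y|}\frac{|\aaa(S_j\cup H\cup Y)|}{|\aaa(S_j)|}|\ff_j|$; dividing and using that $\aaa(S_j\cup H)$ is $r$-spread (as $|S_j\cup H|\le q+(t-1)\le2q$) and $\tau\le2$,
\begin{equation*}
\frac{|\mathcal B_j(Y)|}{|\mathcal B_j|}\le2\tau^{\ell+|Y|}\frac{|\aaa(S_j\cup H\cup Y)|}{|\aaa(S_j\cup H)|}\le4\,(\tau/r)^{|Y|},
\end{equation*}
so $\mathcal B_j$ is $r':=\tfrac r{4\tau}$-spread; since $r>2^{15}\log k$ and $\tau\le1+2\varepsilon\theta$, $r'$ comfortably exceeds $2^{11}\log_2(2k)$. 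Colour $V$ uniformly at random with two colours. Each colour class is a $\tfrac12$-random subset, so by Theorem~\ref{thmtao} with $m=\log_2(2k)$, $\delta=(2\log_2(2k))^{-1}$ (whence $m\delta=\tfrac12$ and $r'\delta>2^{10}$) it contains a member of $\mathcal B_j$ with probability strictly above $\tfrac12$; a union bound over the two colours produces, with positive probability, $B_1\in\mathcal B_1$ monochromatic of colour $1$ and $B_2\in\mathcal B_2$ of colour $2$, so $B_1\cap B_2=\emptyset$. Then $F_1:=B_1\cup H\in\g_1\subseteq\ff_1$ and $F_2:=B_2\cup H\in\g_2\subseteq\ff_2$; since each $B_i$ avoids $S_1\cup S_2$, avoids $H$, and $B_1\cap B_2=\emptyset$, we get $(F_1\cup S_1)\cap(F_2\cup S_2)=H\sqcup(S_1\cap S_2)$, of size $t-1$, as required.

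The main obstacle is the middle step. For a general ambient family the ratio $|\aaa(S_j\cup H)|/|\aaa(S_j)|$ varies with $H$, so (unlike for permutations) $|\g_j(H)|$ cannot be bounded uniformly from above; condition~(ii) is precisely what pins this ratio down from below on a $(1-\varepsilon)$-fraction of cores, and condition~(i) is what lets the ``good cores'' for $j=1$ and $j=2$ be measured against the single reference family $\hh_\ell(\aaa)$, so that one $H$ works for both. A secondary subtlety is to avoid a $\theta^{-1}$ loss in the spreadness of $\mathcal B_j$: this is why $|\mathcal B_j|$ is bounded below by $\tfrac12\frac{|\aaa(S_j\cup H)|}{|\aaa(S_j)|}|\ff_j|$ rather than the cruder $\tfrac\theta2\mu_j|\ff_j|$, letting the $r$-spreadness of $\aaa(S_j\cup H)$ close the estimate with no $\theta$-dependence. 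All remaining ingredients — the reduction to $\g_i$, the double-counting identities, and the random $2$-colouring — transcribe the corresponding steps of the proof of Theorem~\ref{thmpermapprox2}, and the hypotheses $n\ge k\ge t$, $q\ge t$ are used only to guarantee $\ell+|S_j|\le k$ and $|S_j\cup H|\le2q$.
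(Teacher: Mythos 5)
Your proof is correct and follows the same architecture as the paper's: restrict each $\ff_i$ to the sets avoiding $S_{3-i}$, draw a random core $H$ from the $\ell$-shadow, use regularity condition (ii) to get a lower bound on $|\g_j(H)|$ with probability $1-O(\varepsilon)$, use condition (i) to transfer both events to the common reference set $\hh_\ell(\aaa)$ so a single $H$ works for $j=1,2$, verify that $\g_j(H)$ is $\Omega(r)$-spread, and finish with the random $2$-colouring via Theorem~\ref{thmtao}. The only local difference is that you bound the bad-core probability by applying Markov's inequality to the nonnegative slack $s_j(H)=\tau^\ell\frac{|\aaa(S_j\cup H)|}{|\aaa(S_j)|}|\ff_j|-|\g_j(H)|$, which is a cleaner substitute for the paper's conditional-expectation decomposition of $\E\xi_i$ and $\E\eta_i$, but yields the same conclusion.
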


We postpone the proof of the lemma until the next subsection and deduce the theorem from the lemma.

\begin{proof}[Proof of Theorem \ref{genES}]
Let $\ff$ be as in the statement of the theorem, denote $\tau = 1 + 1.5\varepsilon \theta t^{-1}$ and apply Lemma \ref{lem31} to the subfamily $\ff \subset \aaa$ with parameters $\tau, q$. We obtain some family $\s \subset {[n] \choose \le q}$ and subfamilies $\ff'$ and $\ff_B \subset \ff, B\in \s,$ satisfying the conditions (i)-(iii) from Lemma \ref{lem31}. Lemma~\ref{lem31} gives
$$
|\ff'| \le \tau^{-q-1} |\aaa| \le e^{-\frac{\varepsilon \theta q}{t}} |\aaa|.
$$
The family $\aaa$ satisfies the conditions of Lemma \ref{ess}, moreover, it is easy to check that the inequalities on the parameters are also satisfied. Applying the lemma to each pair of sets $S_1, S_2 \in \s$ and families $\ff_{S_1}(S_1) \subset \aaa(S_1)$, $\ff_{S_2}(S_2) \subset \aaa(S_2)$, we conclude that $|S_1 \cap S_2| \ge t$ and thus that $\s$ is a $t$-intersecting family.

If $\s$ is a trivial $t$-intersecting family, i.e., if there is a $t$-element set $T$ such that $T\subset B$ for any $B\in \s$, then $\ff \setminus \ff' \subset \aaa[T]$ and so
$$
|\ff \setminus \aaa[T]| \le |\ff'| \le e^{-\frac{\varepsilon \theta q}{t}} |\aaa|.
$$
If $\s$ is a non-trivial $t$-intersecting family then we can apply Theorem \ref{thmapproxtrivial} with $\varepsilon = \frac{2^{17} q \log q}{r}$, and conclude that there exists $T \in {[n] \choose t}$ such that $|\ff \setminus \ff'| \le |\aaa[\s]| \le \frac{2^{17} q \log q}{r} |\aaa[T]|$. So we obtain
$$
|\ff| \le |\ff'| + |\ff \setminus \ff'| \le e^{-\frac{\varepsilon \theta q}{t}} |\aaa| + \frac{2^{17} q \log q}{r} |\aaa(T)|.
$$
Both cases of Theorem \ref{genES} are covered.
\end{proof}

\subsection{Proof of Lemma \ref{ess}}

In the setting of Lemma \ref{ess}, denote $\ell = t - 1 - |S_1 \cap S_2|$.
For $i = 1,2$ put $\ff_i':=\ff_i(\overline{S_{3-i}\setminus S_{i}})$. Note that any set $F \in \ff_1'$ satisfies $F\cap (S_1\cup S_2) = \emptyset$. %such that $F \cap S_{2} = S_1 \cap S_{2}$ and define the family $\ff_2'$ is a similar manner.
We have
\begin{align*}
    |\ff_i'| \ge& |\ff_i| - \sum_{x\in S_{3-i}\setminus S_i}\frac{|\ff(S_i \cup \{x\})|}{|\ff(S_i)|} |\ff_i| \\
    \ge& |\ff_i| - \sum_{x\in S_{3-i}\setminus S_i} \tau \frac{|\aaa(S_i \cup \{x\})|}{|\aaa(S_i)|} |\ff_i| \\
    \ge& |\ff_i| - \tau \frac{|S_{3-i}\setminus S_i|}{r} |\ff_i|\ge \left(1 - \tau \frac{q}{r}\right) |\ff_i|,
\end{align*}
where we used the $(\aaa(S_i), \tau)$-homogeneity of the family $\ff_i$ in the second inequality, $r$-spreadness of $\aaa(S_i)$ in the third inequality, and $|S_{3-i}|\le q$ in the fourth inequality.
%where $x$ is some element of $S_{3-i} \setminus S_i$.

Fix $i\in [2]$ and let $\underline{H}$ be a uniformly random element of $\hh_\ell(\aaa(S_i))$. Denote $\xi_i = |\ff_i'(\underline{H})|$ and $\eta_i = \frac{|\aaa(S_i \cup \underline{H})|}{|\aaa(S_i)|} |\ff_i'|$. For any $F \in \aaa(S_i)$ the number of $H \in \hh_\ell(\aaa(S_i))$ such that $H \subset F$ is ${k-|S_i| \choose \ell}$, and so for any $\g\subset \aaa(S_i)$
$$
\E |\g(\underline{H})| =  |\g|\frac{{k - |S_i| \choose \ell}}{|\hh_\ell(\aaa(S_i))|} .
$$
Therefore,
$$
\E \eta_i = \frac{|\ff_i'| }{|\aaa(S_i)|} \E |\aaa(S_i \cup \underline{H})| = \frac{|\ff_i'| }{|\aaa(S_i)|} \frac{|\aaa(S_i)|}{|\hh_\ell(\aaa(S_i))|} {k - |S_i| \choose \ell} = \E \xi_i.
$$
Since $\ff_i$ is $(\aaa(S_i), \tau)$-homogeneous, for any $\underline{H}$ we have:
$$
\xi_i = |\ff_i'(\underline{H})| \le |\ff_i(\underline{H})|\le \tau^\ell \frac{|\aaa(S_i \cup \underline{H})|}{|\aaa(S_i)|} |\ff_i| \le \tau^\ell\left(1 - \tau \frac{q}{r} \right)^{-1}  \eta_i \le (1+4 \varepsilon \theta) \eta_i,
$$
where the last inequality uses $\tau^\ell\le 1+2\varepsilon\theta$, $q\le \varepsilon\theta r$ and some easy but somewhat tedious estimates. 
Since the distribution of $\underline{H}$ is uniform, the second condition of $(t, q, \varepsilon, \theta)$-regularity states
$$
\Prb(\eta_i \ge \theta \E\eta_i) \ge 1-\varepsilon.
$$

The difference $(1+4\varepsilon \theta) \eta_i -\xi_i$ is a non-negative random variable with expectation $4\varepsilon \theta \E\xi_i$. So by Markov's inequality,
$$
\Prb((1+4\varepsilon \theta) \eta_i -\xi_i \ge 0.5\theta \E\xi_i) \le \frac{4\varepsilon\theta}{\theta/2} = 8\varepsilon,
$$
and so we obtain
$$
\Prb(\xi_i \le 0.5 \eta_i) \le \Prb(\eta_i \le \theta \E\xi_i) + \Prb((1+4\varepsilon \theta) \eta_i -\xi_i \ge 0.5\theta \E\xi_i) \le 9\varepsilon.
$$

Thus, $\Prb(\xi_i > 0.5 \eta_i)\ge 1-9\varepsilon$. Property (i) in the definition of $(t,q,\varepsilon,\theta)$-regularity applied to $\aaa(S_i)$ imlpies that a uniformly random $\underline{H'}\in \partial_l(\aaa)$ belongs to $\partial_\ell(\aaa(S_i))$ with probability $1-\varepsilon$, and, conditioned on this event, is distributed uniformly in $\partial_\ell(\aaa(S_i))$. Unveiling the definitions of $\xi_i,\eta_i$, we get
$$\Prb\left(|\ff'_i(\underline{H'})| \ge 0.5 \frac{|\aaa(S_i \cup \underline{H'})|}{|\aaa(S)|}|\ff'_i|\right) \ge  \Prb\big(\underline{H'}\in \partial_\ell(\aaa(S_i))\big)\cdot \Prb(\xi_i\ge 0.5\eta_i)\ge 1-10\varepsilon.$$
Therefore, with probability at least $1-20\varepsilon > 0$ we have
$$|\ff'_i(\underline{H'})| \ge 0.5 \frac{|\aaa(S_i \cup \underline{H'})|}{|\aaa(S)|}|\ff'_i|$$
 for both $i=1,2$. For any $H\in \hh_\ell(\aaa)$ which belongs to this event we automatically have $H \cap (S_1 \cup S_2) = \emptyset$ and for $i=1, 2$:
\begin{equation}\label{111n}
|\ff'_i(H)| \ge 0.5 \frac{|\aaa(S_i \cup H)|}{|\aaa(S)|} |\ff'_i| \ge \frac14 \frac{|\aaa(S_i \cup H)|}{|\aaa(S)|} |\ff_i|.
\end{equation}
Since $\ff_i$ is $(\aaa(S_i), \tau)$-homogeneous, for every set $Y$ disjoint from $H$ we have
$$
|\ff'_i(H \cup Y)| \le |\ff_i(H \cup Y)| \le\tau^{|H|+|Y|} \frac{|\aaa(S_i \cup H \cup Y)|}{|\aaa(S_i)|} |\ff_i| \le r^{-|Y|} \tau^{|H|+|Y|} \frac{|\aaa(S_i \cup H)|}{|\aaa(S_i)|} |\ff_i| ,
$$
where in the third inequality we used the $(r, 2q)$-spreadness of $\aaa$ and $|S_i\cup H|\le 2q$. So by (\ref{111n}) we get
$$
|\ff'_i(H \cup Y)| \le 4 r^{-|Y|} \tau^{|H|+|Y|} |\ff'_i(H)| \le 4 \tau^{t-1} (\tau^{-1} r)^{-|Y|} |\ff'_i(H)|,
$$
and so the family $\ff_i'(H)$ is $R$-spread with $R = \frac 14 \tau^{-t} r \ge \frac r 8 > 2^{12} \log k$. By the argument as in Lemma~\ref{lemtint} we can find sets $B_i \in \ff'_i(H)$ such that $B_1 \cap B_2 = \emptyset$. Then the sets $F_i = B_i \cup H$ satisfy the conclusion of the claim.

\subsection{Proof of Theorem~\ref{thmintro1}}
Let us first derive a proposition that states that Theorem~\ref{genES} is applicable in our situation.

\begin{prop}\label{example1}
The family $\aaa = {[n]\choose k}$ satisfies the condition of Theorem~\ref{genES} with $\epsilon = 0.01,$ $\theta = 1$, $r = n/k$, and $q = 400t^2 \log n$ for $n> 2^{26}k t^2\log^2n$ and $k\ge q+t.$ In these conditions, we get that for any $(t-1)$-avoiding $\ff\subset {[n]\choose k}$ there exists a $t$-element set $T \subset [n]$ such that either
$$
|\ff \setminus \aaa(T)| \le n^{- 4t} |\aaa|,
$$
or we can bound the size of $\ff:$ $$
|\ff| \le n^{- 4t} |\aaa| + \frac{2^{26}k t^2\log^2n}{n} |\aaa(T)|.
$$
\end{prop}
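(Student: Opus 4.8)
The plan is to verify directly that $\aaa = {[n]\choose k}$ meets the three bullet-point hypotheses of Theorem~\ref{genES}, namely $(t,q,\varepsilon,\theta)$-regularity, $(r,2q)$-spreadness, and the arithmetic inequality on $r$, and then simply quote Theorem~\ref{genES}. First I would check spreadness: for any $S \in \hh_{\le 2q}\aaa$ with $|S|=s\le 2q$ and any $X \supset S$, we have $\frac{|\aaa(S)|}{|\aaa(X)|} = \frac{\binom{n-s}{k-s}}{\binom{n-|X|}{k-|X|}} \ge \big(\frac{n-k}{\,\cdot\,}\big)^{|X|-s}\ge (\frac{n-k+1}{\cdots})^{|X|-s}$; the standard estimate gives that $\aaa(S)$ is $\frac{n-s}{k-s}$-spread, and since $s \le 2q \le k$ and $n>2^{26}kt^2\log^2 n$, one checks $\frac{n-s}{k-s}\ge \frac{n-k}{k}\ge \frac{n}{2k}\ge \frac n{2k}$, so $\aaa$ is $(r,2q)$-spread with $r = n/k$ up to a harmless factor of $2$ that is absorbed by the slack in the other conditions (or one simply sets $r = n/(2k)$ and re-checks the inequalities, which only changes constants).

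Next I would verify the regularity conditions with $\theta = 1$ and $\varepsilon = 0.01$. For $\aaa = {[n]\choose k}$ the shadow $\hh_l(\aaa(S))$ for $S \in \hh_{\le q}\aaa$ is exactly ${[n]\choose l}$ (every $l$-subset of $[n]$ extends to a $k$-set containing $S$, as long as $k \ge q + t \ge |S| + l$), so in fact $\hh_l(\aaa(S)) = \hh_l(\aaa)$ and condition (i) holds with $\varepsilon = 0$. For condition (ii) with $\theta = 1$, note that for a uniformly random $l$-set $\underline H$ disjoint from $S$ (and $S \cup \underline H$ automatically has size $\le q + t$, so $\le k$ by the hypothesis $k\ge q+t$), the quantity $|\aaa(S\cup\underline H)| = \binom{n - |S| - l}{k - |S| - l}$ is a deterministic constant — it does not depend on $\underline H$ at all — hence equals its own expectation and the probability in \eqref{aaaa} is exactly $1 \ge 1-\varepsilon$. (One should be slightly careful that $\underline H$ ranges over $\hh_l(\aaa(S))$, which includes sets meeting $S$; but for those $\aaa(S\cup\underline H)$ uses $|S\cup\underline H|<|S|+l$ and is only larger, so the bound still holds — or one observes $\hh_l(\aaa(S)) = \binom{[n]}{l}$ and all is uniform.) Thus $\aaa$ is $(t,q,0.01,1)$-regular, in fact with a lot of room to spare.

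Then I would check the numerical inequality $r \ge \max\{2^{15}\log k,\ 2^{17}q\log q,\ q/(\varepsilon\theta)\}$ with $r = n/k$, $q = 400 t^2 \log n$, $\varepsilon\theta = 0.01$. Since $q \le k$ is needed for the above (and follows from $k \ge q + t$), we have $q\log q \le k \log k$, and $n/k \ge 2^{26} t^2 \log^2 n \ge 2^{17}\cdot 400 t^2\log n \cdot \log(400t^2\log n)$ provided $n$ is large — this is where the precise exponent $2^{26}$ and the polynomial hypothesis $\alpha > 1 + 2\beta$ (equivalently $n > n^{2\beta+}k^{0+}$ slack) are spent; similarly $q/(\varepsilon\theta) = 4\cdot 10^4 t^2\log n \le n/k$ and $2^{15}\log k \le n/k$ are comfortably implied. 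Finally I would translate the conclusion of Theorem~\ref{genES}: the exponent $e^{-\varepsilon\theta q/t} = e^{-0.01\cdot 400 t\log n} = e^{-4t\log n} = n^{-4t}$ gives the stated remainder term, and $\frac{2^{17}q\log q}{r} \le \frac{2^{17}\cdot 400t^2\log n\cdot\log(400t^2\log n)}{n/k} \le \frac{2^{26}kt^2\log^2 n}{n}$ (absorbing $\log q \le \log n$ and the constant $2^{17}\cdot 400 < 2^{26}$), which is the stated coefficient of $|\aaa(T)|$. I expect the only real work to be the bookkeeping in the last paragraph — matching constants and confirming the chain of inequalities on $n$, $k$, $q$, $t$ — while the conceptual content (regularity holding trivially for the complete uniform family, spreadness being the classical binomial estimate) is essentially immediate; the main obstacle, such as it is, is just being careful that $\hh_l(\aaa(S)) = \binom{[n]}{l}$ so that conditions (i) and (ii) are exactly satisfied rather than merely approximately.
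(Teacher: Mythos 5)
Your overall route is exactly the paper's: verify $(t,q,\varepsilon,\theta)$-regularity and $(r,2q)$-spreadness of ${[n]\choose k}$ directly, check the numerical condition on $r$, and translate the conclusion of Theorem~\ref{genES}. One step, however, is justified incorrectly. You claim $\hh_\ell(\aaa(S)) = {[n]\choose \ell}$ so that condition (i) of regularity holds with $\varepsilon = 0$. Under the paper's convention $\aaa(S) = \{A\setminus S : S\subset A\in\aaa\}$, the sets of $\aaa(S)$ live in $[n]\setminus S$, so $\hh_\ell(\aaa(S)) = {[n]\setminus S\choose \ell}$, which has size ${n-|S|\choose \ell}$, \emph{not} ${n\choose \ell}$; an $\ell$-set meeting $S$ is not in the shadow. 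Condition (i) is therefore not free: one must check ${n-|S|\choose \ell}\ge (1-\varepsilon){n\choose \ell}$, i.e.\ $\bigl(\tfrac{n-\ell}{n-q-\ell}\bigr)^{\ell}\le e^{q\ell/(n-q-\ell)}\le 1.01$, which is what the paper does and which genuinely consumes a hypothesis of the form $n\gtrsim q t$ (amply implied by $n>2^{26}kt^2\log^2 n$ with $q=400t^2\log n$). So your conclusion for (i) is true, but for the wrong reason; as stated, your argument would "prove" (i) with $\varepsilon=0$ for \emph{any} $n$, which is false. Your treatment of condition (ii) is fine once the shadow is correctly identified ($|\aaa(S\cup H)|={n-|S|-\ell\choose k-|S|-\ell}$ is deterministic, so $\theta=1$ works).

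A smaller point: the spreadness computation does not need your factor of $2$. The clean bound $\frac{|\aaa(S\cup X)|}{|\aaa(S)|}=\frac{{n-|S|-|X|\choose k-|S|-|X|}}{{n-|S|\choose k-|S|}}\le \bigl(\tfrac{k-|S|}{n-|S|}\bigr)^{|X|}\le \bigl(\tfrac kn\bigr)^{|X|}$ gives $(n/k,2q)$-spreadness exactly, which matters because the constant $2^{26}$ leaves only a factor of about $1.28$ of slack over $2^{17}\cdot 400$; your fallback $r=n/(2k)$ would not fit under $2^{26}$ without adjusting the constant. With these two repairs your write-up coincides with the paper's proof.
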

\begin{proof}
Let us first verify the regularity condition. First, for $S\in {[n]\choose \le q}$ and $\ell\le t$, $q+t\le k$, we have $|\partial_\ell(\aaa(S))| = {n-|S|\choose \ell}$ and $|\partial_\ell(\aaa)| = {n\choose \ell}$. We have $$\frac{{n\choose \ell}}{{n-|S|\choose \ell}}\le \frac{{n\choose \ell}}{{n-q\choose \ell}}\le \Big(\frac{n-\ell}{n-q-\ell}\Big)^\ell\le e^{q\ell/(n-q-\ell)}<1.01,$$
provided $n\ge q+\ell+200q\ell$, which is implied by our restrictions. Second, note that for $X\subset [n]$ of fixed size we have $|\aaa(X)| = {n-|X|\choose k-|X|}$, and thus the random variable $|\aaa(S\cup \underline{H})|$ is constant. This immediately gives (ii).

Next, we verify the $(r,2q)$-spreadness condition. For any set $S\in {[n]\choose \le 2q}$ and set $X\subset [n]\setminus S$ we have
$$\frac{|\aaa(S\cup X)|}{|\aaa(S)|} = \frac{{n-|S|-|X|\choose k-|S|-|X|}}{{n-|S|\choose k-|S|}}\le \Big(\frac{k-|S|}{n-|S|}\Big)^{|X|}\le \Big(\frac kn\Big)^{|X|}.$$
The last thing is to verify the condition on $r = n/k$:
$$n\ge k\cdot\max\{2^{15}\log k, 2^{17}q\log q, 100q\}.$$
The right-hand side is at most
$$ k\cdot\max\{2^{15}\log n, 2^{17}q\log n\} = 2^{17}kq\log n = 2^{17}k\cdot 400t^2\log^2 n.$$ A stronger inequality $n\ge 2^{26} kt^2\log^2n$ holds in our assumptions. %Also note that $\log k\le \log n$ and thus the second expression in the maximum above is bigger than the first one.
\end{proof}

\begin{proof}[Proof of Theorem~\ref{thmintro1}]
We apply Proposition~\ref{example1} to a $(t-1)$-avoiding $\ff\subset \aaa:={[n]\choose k}$ for $n>2^{26}kt^2\log^2n$ and get that either $|\ff|\le n^{-4t}{n-t\choose k-t}+\frac 12 {n-t\choose k-t}<{n-t\choose k-t},$ or there exists a set $T$ of size $t$ such that $|\ff\setminus \aaa(T)|<n^{-4t}{n\choose k}$. If the family $\ff$ is extremal then the latter should hold, which we suppose below.

The remaining step is to show that $\ff' = \ff\setminus \aaa(T)$ is empty.  Put $|\ff'|=\epsilon{n\choose k}$ and fix such a set $Y\subset T$ such that $|\ff'(Y,T)|\ge |\ff'(Y',T)|$ for any other $Y'\subset T$. Denote $\mathcal H:=\ff'(Y,T)$. In particular, $|\mathcal H|\ge \epsilon{n-t\choose k-|Y|}$. Take a subset $X\subset [n]\setminus T$ of size $t-1-|Y|$ such that $|\mathcal H(X)|\ge \epsilon{n-t-|X|\choose k-t+1}$. Note that this is possible since the right-hand side is simply the expected size of $\mathcal H(X)$ for a randomly chosen $X\subset [n]\setminus T$ of size $t-1-|Y|$. Let us denote $\mathcal H_1:=\mathcal H(X)$ and note $\mathcal H_1\subset {[n]\setminus (X\cup T)\choose k-t+1}$. Choose $\delta$ such that $|\mathcal H_1| = \delta {n-t-|X|\choose k-t+1}$ and note $\delta\ge \epsilon$. Also, let us put $\ff_1:= \ff(T\cup X)$. The family $\ff_1$ is a subfamily in ${[n]\setminus (T\cup X)\choose k-t-|X|}$. Note that $\ff_1$ and $\mathcal H_1$ are cross-intersecting.  Recall that for a family $\mathcal W\subset 2^{[n]}$, the upper $\ell$-shadow $\partial^{\ell}(\mathcal W)$ of $\mathcal W$  is defined as follows: $\partial^{\ell}(\mathcal W) = \{S\in {[n]\choose \ell}: A\subset S \text{ for some }A\in \mathcal W\}.$
The families $\mathcal H_2:=\partial^{(n+k)/2}(\mathcal H_1)$ and $\ff_1$ are cross-intersecting as well. A numerical consequence of the Kruskal--Katona theorem (see \cite[Theorem 2]{BT}) implies that $|\mathcal H_2| \ge \delta^{1/2}{n-t-|X|\choose (n+k)/2}$. Let us recall the following standard fact about independent sets in regular bipartite graphs:
\begin{lem}
Let $G = (V \cup U, E)$ be a nonempty biregular bipartite graph and let $I \subset V \cup U$ be an independent set. Then we have $\frac{|I \cap V|}{|V|} + \frac{|I \cap U|}{|U|} \le 1$.
\end{lem}
To apply the lemma, we put $V = {[n] \setminus (T\cup X)\choose k-t-|X|}$, $U = {[n] \setminus (T\cup X)\choose (n+k)/2}$ and draw an edge between two sets if and only if they are disjoint. Then $I = \ff_1 \cup \mathcal H_2$ is an independent set in $G$ and so we get $|\ff_1| \le (1-\delta^{1/2}){n-t-|X|\choose k-t-|X|}$.

 We claim that $|\ff_1|+|\ff'|<{n-t-|X|\choose k-t-|X|}$ unless $\epsilon = 0$.  Indeed, $\epsilon^{1/2}\le n^{-2t}$, and thus if $\epsilon\ne 0$ then $$\delta^{1/2}{n-t-|X|\choose k-t-|X|}\ge \epsilon^{1/2}{n-2t\choose k-2t}>\epsilon {n\choose k}=|\ff'|.$$
But this implies that $|\ff[T\cup X]|+ |\ff'|<|\aaa[T\cup X]|,$ and thus $|\ff|< |\aaa[T]|$. Therefore, $\ff$ is not extremal unless $\ff' = \emptyset.$ %This completes the proof of the theorem.

\iffalse
We argue indirectly. Using pigeon-hole principle, there is a subset $X\subset T$ such that the family $\g: = \{F\in \ff': F\cap T = X\}$ satisfies $|\g|\ge 2^{-t}|\ff'|$. Next, using the pigeon-hole principle again, there is a set $X'\subset [n]\setminus T$ of size $t-1-|X|$ such that $|\g(X')|\ge {n\choose |X'|}^{-1}|\g|\ge 2^{-t}{n\choose t}^{-1}|\ff'|\ge n^{-t}|\ff'|.$ Putting $\h = \ff(T\cup X')$ and $\g' = \g(X'),$ we get that $\h$ and $\g'$  are cross-intersecting. Note that the sets in $\h$ have size $k-t-|X'|$ and sets in $\g'$ have size $k-t+1.$ Let $x$ be such that $|\g'| = {x\choose n-(k-t+1)}.$ Then, using a consequence of the Kruskal--Katona theorem (see, e.g., \cite[Corollary 17]{FK21}), we get that $|\aaa(T\cup X')\setminus \h|\ge {x\choose k-t-|X'|}.$

We have  $|\g'|\le |\ff'|\le n^{-4t}{n\choose k}We have $$
using the  Assume that there is $F\in \ff\seminus \aaa(T).$
\fi

Finally, we note that for $n = k^\alpha, t\le k^\beta$ with $\beta<\frac 12$ and $\alpha>1+2\beta$ both inequalities $n>2^{26}kt^2\log^2n$ and on $k>t+400t^2\log n$ are satisfied, provided $k$ is sufficiently large.
\end{proof}

\section{More examples of families for spread approximations}\label{sec5}
In this section, we give several examples of families satisfying the conditions of Theorems~\ref{thm3},~\ref{thmapproxtrivial} and~\ref{genES}. We choose the values of $q$ below so that the resulting conclusions give us a meaningful and interesting approximation result. For brevity, in some cases we do not give the full derivation of the corresponding Ahlswede--Khachatrian or Erd\H os--S\'os result since it follows the same line of argument as for ${[n]\choose k}$ or $\Sigma_n$.

We saw that Theorem~\ref{genES} is applicable to the most natural family: $\aaa = {[n]\choose k}$. We also note that the family $\aaa = \Sigma_n$ of all permutations $[n]\to [n]$ satisfies the conditions of Theorem~\ref{genES}. This should not be surprising, since the proofs of Theorems for permutations followed the same logic. We have verified the $(n/4,n/4)$-spreadness of $\Sigma_n$ in Section~\ref{sec34}. As for the regularity, part (ii) is the same as that for ${[n]\choose k}$: whenever {\it non-empty}, $\Sigma_n(X)$ depends on the size of $X$ only. Part (i) of the regularity condition is actually the most restrictive, it is the reason for the inequality $t\le n^{1/3-o(1)}$ in Theorem~\ref{thmintro2} part 2. Indeed, in order to guarantee it, we need to make sure that a random partial permutation of size  $l\le t$ (i.e. a function that maps some $l$ elements of $[n]$ to some $l$ distinct elements of $[n]$) extends some fixed partial permutation of size at most $q$ with large probability. This means that neither their domains nor images intersect. We can guarantee this only if $n>Cqt$ for some large fixed $C.$

Another example to which Theorem~\ref{genES} is applicable is the family $\aaa = {[n_1]\choose k_1}\times\ldots\times{[n_w]\choose k_w}$ for certain choices of positive integers $n_i,k_i,w$.\footnote{For set systems $\mathcal F_1$, \ldots, $\mathcal F_w$ the Cartesian product $\mathcal F_1 \times \ldots \times \mathcal F_w$ is the family of all sets of the form $(F_1 \times \{1\}) \sqcup (F_2 \times \{2\}) \sqcup \ldots \sqcup (F_w \times \{w\})$ over all $F_i \in \mathcal F_i$.}
For simplicity, let us focus on the case $\aaa = {[n]\choose k}^w$. As in the case of ${[n]\choose k},$ is not difficult to see that $\aaa$ is $\frac nk$-spread and, moreover, $(\frac nk,\ell)$-spread for any $\ell <k$.

\begin{prop}\label{example21}
Let $n,k,w,t\ge 1$ be integers. The family $\aaa = {[n]\choose k}^w$ satisfies the condition of Theorem~\ref{thm3} with $\tau = 2$ and $q = 4t\log n$ for $\frac nk>\max\{2^{13} \log_2(kw), 2^{18}q\log q\}$. Further, under the same inequality on $n$ it satisfies the condition of Theorem~\ref{thmapproxtrivial} with $\epsilon = 0.5.$

Consequently, if $\ff\subset \aaa$ is $t$-intersecting and satisfies $|\ff|\ge \frac 23|\aaa[T]|$ for any $t$-element $T$ then there is a choice of such $T$ that $|\ff\setminus \aaa[T]|<n^{-4t}|\aaa|.$
\end{prop}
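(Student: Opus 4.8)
The plan is to verify the hypotheses of Theorems~\ref{thm3} and~\ref{thmapproxtrivial} for $\aaa = {[n]\choose k}^w$ and then simply chain the two conclusions together, exactly as was done for $\Sigma_n$ in Section~\ref{sec34}.

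First I would establish the spreadness of $\aaa$. For a set $S\subset [n]^{(w)}$ (thinking of the ground set as $w$ disjoint copies of $[n]$), write $S = S_1\sqcup\ldots\sqcup S_w$ with $S_i$ in the $i$-th copy. Then $\aaa(S)$ is nonempty iff $|S_i|\le k$ for all $i$, and in that case $|\aaa(S)| = \prod_{i=1}^w {n-|S_i|\choose k-|S_i|}$. Hence for $X$ disjoint from $S$ one gets
$$\frac{|\aaa(S\cup X)|}{|\aaa(S)|} = \prod_{i=1}^w \frac{{n-|S_i|-|X_i|\choose k-|S_i|-|X_i|}}{{n-|S_i|\choose k-|S_i|}}\le \prod_{i=1}^w\Big(\frac{k}{n}\Big)^{|X_i|} = \Big(\frac kn\Big)^{|X|},$$
so $\aaa(S)$ is $(n/k)$-spread for every $S$ with $|S_i|\le q\le k$; that is, $\aaa$ is $(n/k, q)$-spread (and more, it is $(r,q)$-spread with $r = n/k$ for any $q\le k$, which also gives the $(r,t)$-spreadness needed by Theorem~\ref{thmapproxtrivial}). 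Then I would check the numerical conditions. For Theorem~\ref{thm3} with $\tau = 2$, $r_0 = n/k$, $q = 4t\log n$, one needs $r_0\ge 2\tau q = 16t\log n$ and $r_0 > 2^{12}\tau\log_2(2k)$; both follow from $n > 2^{20}\max\{k,t\}\log^2 n$ (here one should read $\log$ as $\log_2$ up to constants absorbed into $2^{20}$, and note the role of $k$ is as the uniformity of $\aaa$ viewed inside ${[n]^{(w)}\choose kw}$, so $\log_2(2k)$ appears, not $\log_2(2kw)$ --- this is the same subtlety as in the $\Sigma_n$ case). For Theorem~\ref{thmapproxtrivial} with $\varepsilon = 0.5$ one needs $\varepsilon r \ge 2^{17}q\log_2 q$, i.e. $n/(2k)\ge 2^{17}\cdot 4t\log n\cdot\log_2(4t\log n)$, again comfortably implied by $n > 2^{20}\max\{k,t\}\log^2 n$.

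With the hypotheses verified, the derivation is routine. Apply Theorem~\ref{thm3} to the $t$-intersecting family $\ff\subset\aaa$ with the above parameters; in the notation of Lemma~\ref{lem31} we obtain a spread approximation $\s\subset{[n]^{(w)}\choose \le q}$ that is $t$-intersecting, together with $\ff'$ satisfying $|\ff'|\le \tau^{-q-1}|\aaa| = 2^{-4t\log n - 1}|\aaa| < n^{-4t}|\aaa|$ (using $\log = \log_2$; if $\log$ is natural one adjusts the constant in the definition of $q$, as is done implicitly elsewhere in the paper), and with $\ff\setminus\ff'\subset\aaa(\s)$. Now suppose $|\ff|\ge\frac23|\aaa(T)|$ for every $t$-element $T$. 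If $\s$ were non-trivial, Theorem~\ref{thmapproxtrivial} with $\varepsilon = 0.5$ would give a $t$-set $T$ with $|\aaa(\s)|\le\frac12|\aaa(T)|$, whence
$$|\ff|\le |\ff'| + |\aaa(\s)|\le n^{-4t}|\aaa| + \tfrac12|\aaa(T)| < \tfrac23|\aaa(T)|,$$
contradicting the assumption (here one uses $n^{-4t}|\aaa|$ is negligible compared to $|\aaa(T)|$, since $|\aaa(T)|/|\aaa| = \prod_i {n-t_i\choose k-t_i}/{n\choose k}\ge (k/n)^{O(t)}\gg n^{-4t}$ when the $t_i$ sum to $t\le k$). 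Therefore $\s$ must be a trivial $t$-intersecting family: all its members contain a common $t$-set $T$. Then $\ff\setminus\ff'\subset\aaa(\s)\subset\aaa(T)$, so $\ff\setminus\aaa(T)\subset\ff'$ and $|\ff\setminus\aaa(T)|\le|\ff'|<n^{-4t}|\aaa|$, as claimed.

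The main obstacle here is not conceptual but bookkeeping: one must be careful that the relevant "uniformity parameter" entering the logarithms in Theorems~\ref{thm3} and~\ref{thmapproxtrivial} via Theorem~\ref{thmtao} is $k$ (the size of each coordinate block) rather than $kw$ (the total set size), because the random-colouring / sunflower argument is applied coordinate-wise to families whose sets have size $\le k$ in each of $w$ independent copies --- exactly as $\Sigma_n\subset{[n]^2\choose n}$ was treated with the parameter $n$ and not $n^2$. Once this is set up correctly, every inequality is a one-line consequence of $n > 2^{20}\max\{k,t\}\log^2 n$, and the rest is a verbatim repetition of the two-step scheme (spread approximation, then triviality of the approximation) already carried out for ${[n]\choose k}$ and $\Sigma_n$.
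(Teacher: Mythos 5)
The paper actually omits the proof of this proposition (``We omit the proof of Proposition~\ref{example21}\dots''), so there is nothing to compare against verbatim; your two-step scheme (verify $(n/k,q)$-spreadness, check the numerical hypotheses, apply Theorem~\ref{thm3} and then Theorem~\ref{thmapproxtrivial}) is exactly the intended route, mirroring Proposition~\ref{example1}, Proposition~\ref{example22} and Section~\ref{sec34}. The spreadness computation and the final chaining argument (non-trivial $\s$ forces $|\ff|<\frac23|\aaa(T)|$; trivial $\s$ gives $\ff\setminus\aaa(T)\subset\ff'$) are correct.

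Two of your quantitative claims do not hold up, however. First, the condition $r_0\ge 2\tau q$ of Theorem~\ref{thm3} reads $n/k\ge 16t\log n$, i.e.\ $n\ge 16kt\log n$, and likewise the hypothesis of Theorem~\ref{thmapproxtrivial} requires $n\gtrsim 2^{19}kt\log n\log_2(4t\log n)$; neither is implied by $n>2^{20}\max\{k,t\}\log^2 n$ once both $k$ and $t$ exceed $\log n$ (take $k=t$ of order $\sqrt n$: the stated hypothesis holds for large $n$ but $16kt\log n>n$). What is really needed is the product $kt$ in place of $\max\{k,t\}$ --- compare Proposition~\ref{example22}, whose hypothesis is $n>2^{28}kt^2\log^2 n$ --- so this defect is arguably inherited from the proposition's statement, but your assertion that ``both follow'' is false as written. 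Second, your parenthetical about the uniformity parameter is backwards. In Theorem~\ref{thmtao} the error term is $(5/\log_2(r\delta))^m|\mu|$, where $|\mu|$ is the expected \emph{set size}; the proof of Theorem~\ref{thm3} colours the entire ground set at once and applies Theorem~\ref{thmtao} to the whole family $\g_\ell'$, not coordinate-wise, so for $\aaa={[n]\choose k}^w$ the relevant parameter is the total uniformity $kw$ and the condition should read $r_0>2^{12}\tau\log_2(2kw)$. The $\Sigma_n$ precedent supports this reading rather than yours: there the parameter used is the set size $n$ (not the ground-set size $n^2$, but also not anything smaller). The discrepancy is only a factor $\log_2 w$, harmless for $w$ polynomial in $n$, but the proposition's hypothesis places no restriction on $w$ at all, so your proof (and the statement) silently needs $w\le n^{O(1)}$ or an extra $\log w$ in the bound.
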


Under the conditions of Proposition~\ref{example21} it is not difficult to derive that if $\ff\subset \aaa$ is $t$-intersecting and extremal then $\ff = \aaa[T]$ for some $t$-element $T.$ The derivation is virturally identical to that in the proof of Theorem~\ref{thmintro1}.

For the Erd\H os--S\'os property the conditions are a bit more restrictive. We omit the proof of Proposition~\ref{example21} and only give the proof of the following proposition.

\begin{prop}\label{example22}
Let $n,k,w,t\ge 1$ be integers. The family $\aaa = {[n]\choose k}^w$ satisfies the condition of Theorem~\ref{genES} with $\epsilon = 0.01,$ $\theta = 0.5$, $q = 800t^2 \log n$ for $\frac nk> \max\{2^{15}\log(kw), 2^{17}q\log q\}$ and $k\ge 2q.$

Consequently, if $\ff\subset \aaa$ omits intersection $(t-1)$ and satisfies $|\ff|\ge \frac 23|\aaa[T]|$ for any $t$-element $T$ then there is a choice of such $T$ that $|\ff\setminus \aaa[T]|<n^{-4t}|\aaa|.$
\end{prop}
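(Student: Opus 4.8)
The plan is to verify, one at a time, the three hypotheses of Theorem~\ref{genES} for $\aaa={[n]\choose k}^w$ — that it is $(r,2q)$-spread and $(t,q,\varepsilon,\theta)$-regular — with $\varepsilon=0.01$, $\theta=1/2$, $q=800t^2\log n$ and $r=n/(2k)$, and then simply to read off the stated dichotomy from the theorem and deduce the ``consequently'' clause. Throughout I regard $\aaa$ as a $kw$-uniform family on the ground set $V=[n]\times[w]$, split into $w$ blocks $V_1,\dots,V_w$ of size $n$, a member being the choice of a $k$-subset inside each block; for $S\subseteq V$ I write $s_i=|S\cap V_i|$, so $\aaa(S)\ne\emptyset$ exactly when $\max_i s_i\le k$, and then $|\aaa(S)|=\prod_{i=1}^w{n-s_i\choose k-s_i}$.

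The spreadness and part~(i) of regularity are routine binomial-ratio estimates, identical in spirit to Proposition~\ref{example1}. For spreadness, for $S$ with $|S|\le 2q$ and $X$ disjoint from $S$,
$$\frac{|\aaa(S\cup X)|}{|\aaa(S)|}=\prod_{i=1}^w\ \prod_{j=0}^{|X\cap V_i|-1}\frac{k-s_i-j}{n-s_i-j}\le\Big(\frac{k}{n-2q}\Big)^{|X|}\le\Big(\frac{2k}n\Big)^{|X|},$$
since $n\ge 4q$ follows from $n>2^{28}kt^2\log^2 n$ and $k\ge 2q$; hence $\aaa$ is $(n/(2k),2q)$-spread, and the binding lower bound required of $r$, namely $2^{17}q\log q\le 2^{17}\cdot 800\,t^2\log^2 n$, is met because $n>2^{28}kt^2\log^2 n$, while $n/(2k)\ge 2^{15}\log k$ and $n/(2k)\ge q/(\varepsilon\theta)=200q$ follow as well. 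For part~(i) of regularity, since $q+t\le 2q\le k$ every $l$-subset of $V$ lies in some member of $\aaa$ and every $l$-subset of $V\setminus S$ in some member of $\aaa(S)$, so $\hh_l(\aaa)={V\choose l}$ and $\hh_l(\aaa(S))={V\setminus S\choose l}$, whence $|\hh_l(\aaa)|/|\hh_l(\aaa(S))|={nw\choose l}/{nw-|S|\choose l}\le e^{l|S|/(nw-|S|-l)}\le 1.01$ because $nw\ge n$ dwarfs $ql$.

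The crux is part~(ii). Fix $l\le t$ and $S$ with $|S|\le q$, let $\underline H$ be uniform in $\hh_l(\aaa(S))={V\setminus S\choose l}$, and let $(h_1,\dots,h_w)$ with $\sum_i h_i=l$ be its block profile — a multivariate hypergeometric vector with $\E h_i=l(n-s_i)/(nw-|S|)$. Then $|\aaa(S\cup\underline H)|=|\aaa(S)|\prod_i\rho_i(h_i)$, where $\rho_i(h)=\prod_{j=0}^{h-1}\frac{k-s_i-j}{n-s_i-j}$; writing $\log\rho_i(h)=h\log\frac kn+\delta_i(h)$, each $\delta_i$ is nonpositive, nonincreasing, bounded on the relevant range ($h\le t$, $s_i\le q\le k/2$), and changes by at most $\log 2+o(1)$ per unit step, so $\log|\aaa(S\cup\underline H)|=\log|\aaa(S)|+l\log\frac kn+Z$ with $Z:=\sum_i\delta_i(h_i)\le 0$. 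The plan is then: first, using negative association of the $h_i$ together with a bounded-differences/Hoeffding estimate for sampling without replacement, show that $Z$ is a bounded-difference function of a sharply concentrated profile, so that $\mathrm{Var}(Z)=O(t)$ and $Z\ge\E Z-O(\sqrt t)$ with probability $\ge 1-\varepsilon$; second, use Jensen's inequality to obtain $\E|\aaa(S\cup\underline H)|=|\aaa(S)|(k/n)^l\,\E e^{Z}\ge|\aaa(S)|(k/n)^l e^{\E Z}$. Comparing the two, the constant factor $\theta=1/2$ (rather than $1-o(1)$) is meant to absorb the residual discrepancy — this is where the $\log n$ gap between $n$ and $k$ is spent — yielding $\Prb\big(|\aaa(S\cup\underline H)|\ge\theta\,\E|\aaa(S\cup\underline H)|\big)\ge 1-\varepsilon$, which is part~(ii).

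With the three hypotheses verified, Theorem~\ref{genES} applies: since $\varepsilon\theta q/t=0.005\cdot 800\,t\log n=4t\log n$ we have $e^{-\varepsilon\theta q/t}=n^{-4t}$, so for any $(t-1)$-avoiding $\ff\subseteq\aaa$ either there is a $t$-set $T$ with $|\ff\setminus\aaa(T)|\le n^{-4t}|\aaa|$, or $|\ff|\le n^{-4t}|\aaa|+\frac{2^{17}q\log q}r|\aaa(T)|$. If moreover $|\ff|\ge\frac23|\aaa(T)|$ for every $t$-set $T$, take $T$ maximising $|\aaa(T)|$; then $n^{-4t}|\aaa|\le 2n^{-3t}|\aaa(T)|\ll\frac16|\aaa(T)|$ and $\frac{2^{17}q\log q}r|\aaa(T)|=O(kt^2\log^2 n/n)\cdot|\aaa(T)|\ll\frac16|\aaa(T)|$, both from $n>2^{28}kt^2\log^2 n$, so the second alternative contradicts $|\ff|\ge\frac23|\aaa(T)|$ and the first must hold, i.e.\ $|\ff\setminus\aaa(T)|<n^{-4t}|\aaa|$. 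The step I expect to be the real work is part~(ii) of regularity: one must show $|\aaa(S\cup\underline H)|$ stays within a constant factor of its mean with probability $1-\varepsilon$, and the delicate point is that when $\underline H$ piles several elements into blocks with large $s_i$ the product $\prod_i\rho_i(h_i)$ drops, so one needs both that this is unlikely and that $\theta=1/2$ leaves enough slack; the use of Jensen for the lower bound on $\E e^{Z}$ is precisely what keeps the one-sided left tail of $Z$ from spoiling the comparison. Every other estimate is one of the routine manipulations of binomial ratios already carried out for $\aaa={[n]\choose k}$.
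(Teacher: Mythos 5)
Your overall architecture --- verify $(r,2q)$-spreadness with $r=n/(2k)$, verify regularity (i) by a ratio of binomial coefficients, verify regularity (ii) via the block profile of a random $\underline{H}$, then feed Theorem~\ref{genES} and rule out its second alternative using $|\ff|\ge\frac23|\aaa(T)|$ --- is exactly the paper's, and your treatment of the spreadness, of regularity (i), and of the final deduction is correct. The gap is in regularity (ii), which you rightly identify as the crux but do not actually prove. Writing $|\aaa(S\cup\underline{H})|=|\aaa(S)|(k/n)^le^{Z}$, you propose to combine (a) $Z\ge\E Z-O(\sqrt t)$ with probability $1-\varepsilon$, and (b) Jensen, $\E e^{Z}\ge e^{\E Z}$. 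This cannot yield $e^{Z}\ge\theta\,\E e^{Z}$: to compare $e^Z$ with $\E e^Z$ from below you need an \emph{upper} bound on $\E e^{Z}$ in terms of $e^{\E Z}$, and Jensen supplies only the (here useless) lower bound. Moreover, even with a reverse-Jensen estimate in hand, a deviation of order $\sqrt t$ in $Z$ is a multiplicative factor $e^{c\sqrt t}$, which no constant $\theta$ can absorb once $t$ is large; you would need the fluctuations of $Z$ to be $O(1)$, not $O(\sqrt t)$.

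The paper's own verification of (ii) is deterministic and avoids concentration altogether: it asserts that for fixed $S$ and $l$ the quantity $|\aaa(S\cup H)|$ varies over all admissible $H$ by a factor of at most $2$, the within-block correction $\prod_{j=s_i}^{s_i+l_i-1}\frac{k-j}{n-j}\big/\big(\tfrac{k-s_i}{n-s_i}\big)^{l_i}$ being controlled by $(k-s_i)^l/(k-s_i-l)^l\le2$ (which follows from $k\ge 2q$ and $q>4t^2$); every value is then at least half the mean and (ii) holds with probability $1$. Be aware, though, that your instinct about where the danger lies is sound: the across-block main term $\prod_i\big(\tfrac{k-s_i}{n-s_i}\big)^{l_i}$ also varies with the profile, by a factor up to roughly $\big(\tfrac{k-\min_i s_i}{k-\max_i s_i}\big)^{l}$, and when $w$ is small and the $s_i$ are unbalanced (say $w=2$, $s_1=0$, $s_2=q=k/2$) this is $2^{\Theta(l)}$; by AM--GM the mean of the product then exceeds its typical value by a factor exponential in $l$, so a constant $\theta$ cannot be recovered from concentration alone. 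Closing (ii) genuinely requires either $k\gtrsim ql$ or an argument exploiting that only few blocks carry a large $s_i$; neither your sketch nor the bare condition $k\ge 2q$ supplies this, and this is precisely the step you must make rigorous.
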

Again, under the conditions of Proposition~\ref{example22} it is not difficult to derive that if $\ff\subset \aaa$ avoids intersection $(t-1)$ and extremal then $\ff = \aaa[T]$ for some $t$-element $T.$

Note that, unlike the previous two cases, this family is not `symmetric', and so the value of $\theta$ is not $1.$ We omit some parts of the proof since they are similar to the case of $\aaa = {[n]\choose k}$ (i.e., $w= 1$).
\begin{proof}
Let us first verify the regularity condition. The first part is done as in the case of ${[n]\choose k}$, so we verify (ii) only. Let us denote the ground set by $X = X_1\sqcup X_2\sqcup\ldots\sqcup X_w$, where $X_i$ has size $n$. Let $S\subset X$ have size $x\le q$ and $H\subset X\setminus S$ have size $\ell\le t,$ and put $s_i = |S\cap X_i|,$ $\ell_i = |H\cap X_i|.$ Then we have $|\aaa(S\cup H)| = \prod_{i=1}^w{n-s_i-\ell_i\choose k-s_i-\ell_i} = \prod_{i=1}^w{n-s_i\choose k-s_i}\prod_{j=s_i}^{s_i+\ell_i-1}\frac {k-j}{n-j}$. It is not difficult to see\footnote{This requires a bit of tedious calculations which we omit} that these expressions differ by a factor of at most $2$ for  different choices of $H$ (i.e., different choices of $\ell_i$  such that $\sum \ell_i = \ell$), provided $(k-s_i)^\ell/(k-s_i-\ell)^\ell\le 2$ for each $i.$ At the same time, the fact that these expressions differ by at most $2$ implies (ii) (with $\epsilon=0$) via averaging. We have $(k-s_i)^\ell/(k-s_i-\ell)^l\le (k-q)^\ell/(k-q-\ell)^\ell\le e^{\ell^2/(k-q-\ell)}$, which is at most $2$ provided $k>q+ 4\ell^2$. Note that $q>4t^2\ge 4\ell^2,$ and thus it is sufficient to require $k\ge 2q$ in order for this inequality to hold.

The spreadness of $\aaa$ is verified as in the case of $ {[n]\choose k}$, so we omit it.
 \end{proof}

The proposition above does not cover the case $\aaa = [n]^k$, so we state it separately without a proof.
\begin{prop}\label{example3}
Let $n,k,t\ge 1$ be integers. The family $\aaa = [n]^k$ satisfies the condition of Theorem~\ref{genES} with $\epsilon = 0.01,$ $\theta = 1$, $q = 400t^2 \log n$ for $n>\max\{2^{15}\log k, 2^{17}q\log q\}$ and $k\ge q+t.$
\end{prop}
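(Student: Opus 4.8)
The plan is to mimic the proof of Proposition~\ref{example1} for $\aaa=\binom{[n]}{k}$ almost line by line, the only genuinely new point being condition (i) of $(t,q,\varepsilon,\theta)$-regularity. First I would fix the embedding: identify $\aaa=[n]^k$ with the subfamily of $\binom{[k]\times[n]}{k}$ consisting of the graphs $\{(1,f(1)),\dots,(k,f(k))\}$ of functions $f\colon[k]\to[n]$. Then a set $S\subset[k]\times[n]$ lies inside a member of $\aaa$ iff it is a partial function, and for such an $S$ of size $s$ the family $\aaa(S)$ is simply a copy of $[n]^{[k]\setminus\mathrm{dom}(S)}$; hence $|\aaa(S\cup X)|=n^{k-s-|X|}$ whenever $S\cup X$ is still a partial function, and $|\aaa(S\cup X)|=0$ otherwise. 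In particular $|\aaa(S\cup X)|/|\aaa(S)|=n^{-|X|}$ for every admissible $X$, so $\aaa$ is $n$-spread at every scale, in particular $(r,2q)$-spread with $r=n$ (the choice of $r$ left implicit in the statement). Exactly as in Proposition~\ref{example1}, the required inequalities $r\ge\max\{2^{15}\log k,\ 2^{17}q\log q,\ q/(\varepsilon\theta)\}$ then follow from $n>2^{27}t^2\log^2 n$ after substituting $q=400t^2\log n$ and using $\log q\le 2\log n$.

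Condition (ii) of regularity I expect to be immediate with $\theta=1$, just as for $\binom{[n]}{k}$: for $S$ of size $s\le q$ and $\ell\le t$, every $H\in\partial_\ell(\aaa(S))$ gives $|\aaa(S\cup H)|=n^{k-s-\ell}$, so the random variable $|\aaa(S\cup\underline H)|$ is constant, hence it equals its expectation with probability $1$. For this (and for (i)) to be non-vacuous one needs $\partial_\ell(\aaa(S))\ne\emptyset$, i.e. $k\ge s+\ell$; with $s\le q$, $\ell\le t$ this is the printed nondegeneracy hypothesis $k\ge q+t$.

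The one step that is not a transcription of the $\binom{[n]}{k}$ argument, and the place where I expect the real work to sit, is condition (i). Here $\partial_\ell(\aaa)$ is the set of size-$\ell$ partial functions, so $|\partial_\ell(\aaa)|=\binom{k}{\ell}n^\ell$, whereas $\partial_\ell(\aaa(S))$ consists of those size-$\ell$ partial functions whose domain avoids $\mathrm{dom}(S)$, so $|\partial_\ell(\aaa(S))|=\binom{k-s}{\ell}n^\ell$. Thus
$$\frac{|\partial_\ell(\aaa(S))|}{|\partial_\ell(\aaa)|}=\frac{\binom{k-s}{\ell}}{\binom{k}{\ell}}=\prod_{i=0}^{\ell-1}\frac{k-s-i}{k-i}\ \ge\ \Bigl(1-\frac{s}{k-\ell}\Bigr)^{\ell}\ \ge\ 1-\frac{s\ell}{k-\ell},$$
which exceeds $1-\varepsilon=0.99$ for all $s\le q$, $\ell\le t$ as soon as $k\ge t+100qt$. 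Note this is strictly stronger than the printed hypothesis $k\ge q+t$: for $[n]^k$ the shadow lives in the $[k]$-coordinate rather than in the huge $[n]$-coordinate, so the mere nondegeneracy bound that sufficed for $\binom{[n]}{k}$ is no longer enough. I would therefore record Proposition~\ref{example3} with $k\ge t+100qt$ (equivalently $k\gtrsim qt$, since $q\ge t$; this also forces $k\ge 2q$, so the $(r,2q)$-spreadness statement is never degenerate). In the intended regime $n=k^\alpha$, $t=k^\beta$ this forces $\beta<1/3$, in parallel with the restriction on $t$ for permutations in Theorem~\ref{thmintro2}(ii). With this corrected hypothesis in place, all hypotheses of Theorem~\ref{genES} hold and its conclusion applies verbatim.

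In short, the only obstacle is the bookkeeping around condition (i): recognising that, unlike for $\binom{[n]}{k}$, the shadow of $[n]^k$ contracts in the coordinate direction, so one must take $k$ large relative to $qt$, not merely relative to $q+t$. Spreadness, the degenerate behaviour of $|\aaa(S\cup\underline H)|$, and the parameter arithmetic are all identical to the case $\aaa=\binom{[n]}{k}$ already treated in Proposition~\ref{example1}, and I would simply cite that proof for them.
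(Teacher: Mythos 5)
The paper states Proposition~\ref{example3} explicitly without proof, so there is nothing to compare against line by line; your route --- transcribing the proof of Proposition~\ref{example1} with $[n]^k$ embedded as graphs of functions in ${[k]\times[n]\choose k}$ --- is exactly the intended one, and your verifications of the $(r,2q)$-spreadness with $r=n$ and of regularity condition (ii) (the variable $|\aaa(S\cup\underline H)|$ is constant, so $\theta=1$ works) are correct.

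Your main observation is also correct and is a genuine catch: condition (i) of regularity compares $|\partial_\ell(\aaa(S))|=\binom{k-s}{\ell}n^\ell$ with $|\partial_\ell(\aaa)|=\binom{k}{\ell}n^\ell$, and the ratio $\binom{k-s}{\ell}/\binom{k}{\ell}$ is only close to $1$ when $k\gtrsim s\ell$, i.e.\ $k\gtrsim qt$; at the printed threshold $k=q+t$ with $s=q$, $\ell=t$ the ratio is $1/\binom{q+t}{t}$, nowhere near $0.99$. So the hypothesis $k\ge q+t$ in the statement is a nondegeneracy condition only and does not suffice; your replacement $k\ge t+100qt$ is what the argument actually needs. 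This is corroborated by the paper's own discussion of $\Sigma_n$ in Section~\ref{sec5}, where the authors note that part (i) of regularity is ``the most restrictive'' and forces $n>Cqt$ for the $n$-uniform permutation family --- the identical phenomenon in the identical coordinate, which they evidently overlooked when writing down the hypothesis for $[n]^k$. One further small point you gloss over: the condition $r\ge 2^{15}\log k$ of Theorem~\ref{genES}, with $r=n$, is not implied by $n>2^{27}t^2\log^2 n$ together with a \emph{lower} bound on $k$, since $k$ is otherwise unconstrained; one should either add $n\ge 2^{15}\log_2 k$ to the hypotheses or note that it is automatic in the intended regime $n=\mathrm{poly}(k)$. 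With these two amendments your proof is complete.
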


Our last example are designs. Let $\aaa$ be an {\it $(n,k,w)$-design}, i.e., a collection of sets from ${[n]\choose k}$ such that every $W\in {[n]\choose w}$ is contained in exactly $1$ set from $\aaa.$

\begin{prop}\label{example4}
Let $n,k,w,t\ge 1$ be integers. An $(n,k,w)$-design $\aaa$ satisfies the condition of Theorem~\ref{genES} with $\epsilon = 0.01,$ $\theta = 1$, $q = 400t^2 \log n$ for $n>k \big(2^{26}t^2\log^2n)^{k/(w-2q)}$ and $w> 2q.$
\end{prop}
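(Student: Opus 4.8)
The plan is to verify directly the two hypotheses of Theorem~\ref{genES} — that $\aaa$ is $(t,q,\epsilon,\theta)$-regular with $\epsilon=0.01$, $\theta=1$, and that it is $(r,2q)$-spread for a suitable $r$ — from the elementary counting identities for a Steiner system. Write $j=|S|$. Recall that in an $(n,k,w)$-design every set of size at most $w$ lies in some block, that for $|S|\le w$
$$|\aaa(S)|=\frac{{n-j\choose w-j}}{{k-j\choose w-j}}$$
(each block through $S$ accounts for ${k-j\choose w-j}$ of the ${n-j\choose w-j}$ many $w$-sets through $S$, and every such $w$-set lies in a unique block), that $|\aaa(S)|\in\{0,1\}$ for $w<|S|\le k$ (two blocks through $S$ would share a $w$-set), and $|\aaa(S)|=0$ for $|S|>k$. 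Since $k\ge w>2q$, the range $j\le 2q$ always keeps us in the ``polynomial'' regime $j<w$.

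Regularity is the easy part. For $|S|\le q$ and $\ell\le t$ we have $|S|+\ell\le 2q<w$, so every $\ell$-set disjoint from $S$ is an $\ell$-subset of some block through $S$; hence $\hh_\ell(\aaa(S))={[n]\setminus S\choose\ell}$ and $\hh_\ell(\aaa)={[n]\choose\ell}$, and their ratio is ${n-j\choose\ell}/{n\choose\ell}\ge 1-q\ell/(n-\ell)\ge 1-\epsilon$, which holds once $n$ exceeds a fixed polynomial in $t$, comfortably implied by the hypothesis. For part (ii), when $H\in\hh_\ell(\aaa(S))$ we again have $|S\cup H|=j+\ell<w$, so $|\aaa(S\cup H)|={n-j-\ell\choose w-j-\ell}/{k-j-\ell\choose w-j-\ell}$ depends only on $j$ and $\ell$; the random variable $|\aaa(S\cup\underline H)|$ is therefore constant and \eqref{aaaa} holds with $\theta=1$ and probability $1$.

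The substantive point is $(r,2q)$-spreadness, and here a design genuinely differs from ${[n]\choose k}$. Fix $S$ with $j\le 2q$ and $X$ disjoint from $S$, and bound $|\aaa(S\cup X)|/|\aaa(S)|$. If $j+|X|\le w$, both sides are given by the formula above and the ratio telescopes to $\prod_{i=0}^{|X|-1}\frac{k-j-i}{n-j-i}\le (k/n)^{|X|}$. If $j+|X|>w$, then $|\aaa(S\cup X)|\le 1$, so the ratio is at most $1/|\aaa(S)|\le\big(ek/(n-j)\big)^{w-j}$; since here $|X|$ ranges up to $k-j$, this only yields spreadness with parameter about $\big((n-j)/ek\big)^{(w-j)/(k-j)}$. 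As $(w-j)/(k-j)$ is decreasing in $j$, the worst case is $j=2q$, so $\aaa$ is $(r,2q)$-spread with $r$ of order $\big(n/ek\big)^{(w-2q)/(k-2q)}$ (which is also at most $n/k$, so the first regime is covered too). It remains to check $r\ge\max\{2^{15}\log k,\,2^{17}q\log q,\,q/(\epsilon\theta)\}$; with $q=400t^2\log n$ and $\theta=1$ this maximum is of order $t^2\log^2 n$ within the stated constants, so the requirement reduces to $\big(n/ek\big)^{(w-2q)/(k-2q)}\gtrsim 2^{27}t^2\log^2 n$, i.e. $n\gtrsim k\,(2^{27}t^2\log^2 n)^{(k-2q)/(w-2q)}$, which is implied by the hypothesis $n>k\,(2^{27}t^2\log^2 n)^{k/(w-2q)}$ (using $k-2q\le k$). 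Having verified both hypotheses, Theorem~\ref{genES} applies, and — exactly as for ${[n]\choose k}$ in the proof of Theorem~\ref{thmintro1} — one may further conclude that an extremal $(t-1)$-avoiding $\ff\subset\aaa$ equals $\aaa(T)$ for a $t$-element set $T$.

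The main obstacle is precisely the spreadness estimate in the overflow regime $j+|X|>w$: unlike ${[n]\choose k}$, a design $\aaa(S)$ stops being genuinely $r$-spread once one asks for extensions of co-size close to $w-j$, and the resulting weak bound $r\approx(n/ek)^{(w-2q)/(k-2q)}$, pitted against the threshold $r\gtrsim q\log q$ demanded by Theorem~\ref{genES}, is what forces the strong lower bound on $n$; carrying the constants cleanly through the two competing exponents $(w-2q)/(k-2q)$ and $k/(w-2q)$ is where the care lies.
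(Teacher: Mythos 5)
Your proof is correct and follows essentially the same route as the paper's (which is only sketched there): the same counting identities $|\aaa(S)|={n-|S|\choose w-|S|}/{k-|S|\choose w-|S|}$ for $|S|\le w$ and $|\aaa(S)|\in\{0,1\}$ beyond, the same verification of regularity via constancy of $|\aaa(S\cup \underline H)|$, and the same spreadness exponent $(w-2q)/k$ versus $(w-2q)/(k-2q)$ (yours is the sharper but equivalent form). The only difference is that you spell out the ``overflow'' regime $|S|+|X|>w$ that the paper dismisses as simple calculations; that is exactly the step the paper intends, so no further comment is needed.
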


\begin{proof}
The proof is very similar to the one for ${[n]\choose k},$ so we will only sketch it. We start with the regularity condition. Part (i) is guaranteed because $\partial_l(\aaa(S)) = {[n]\setminus S\choose l}$, provided $|S|+l\le w$. Part (ii) is guaranteed because $\aaa$ is highly subset-regular, i.e., for any $X\subset [n]$ of fixed size $|\aaa(X)|$ is fixed.

As for spreadness, it is easy to see that $|\aaa(X)| = {n-|X|\choose w-|X|}/{k-|X|\choose w-|X|}$ for $|X|\le w,$ and $|\aaa(X)|\in \{0,1\}$ for $X>w$. Thus, we get that $|\aaa(X')|\le (k/n)^{|X'|-|X|} |\aaa(X)|$ for any $X\subset X',$ $|X'|\le w.$ This, via simple calculations, implies that $\aaa$ is $(r,2q)$-spread for $r = (n/k)^{(w-2q)/k}$. Doing calculations analogous to that in the proof of Proposition~\ref{example1}, we arrive at the bound on $n$ in the statement.
\end{proof}
%We note that all of this will work for {\it approximate designs}.
The dependence of $q$ on $n$ and $w$ on $q$ is undesirable in this case, since we know of the existence of designs for very large $n = n(k,w)$ only. We can guarantee  the Ahlswede--Khachatrian type result under milder restrictions. We only need from the above proof that an $(n,k,w)$-design $\aaa$ is $(r,q)$-spread for $r = (n/k)^{(w-q)/k}.$ %\todo{Keep in mind that our theorem does not directly imply the statement for approximate designs because the regularity condition requires large shadow for all sets of size at most $q$ and this can be easily violated by removing a small amount of elements from the design.}
\begin{prop}\label{example42}
Let $n,k,w,t\ge 1$ be integers such that $k\ge w > t$ and $n > k (2^{18} k \log k)^{\frac{k}{w-t}}$. Let $\aaa$ be any $(n,k,w)$-design, if $\ff \subset \aaa$ is $t$-intersecting then $|\ff| \le {n-t \choose w-t}/{k-t \choose w-t}$ with equality if and only if $\ff = \aaa[T]$ for some $t$ element subset $T$.
%$\aaa$
%satisfies the condition of Theorem~\ref{thmapproxtrivial} with $q = k$, $\varepsilon = 0.5$ and  $r = (n/k)^{w/2k}$ and for .
%$n>k \big(2^{13} \max\{\log_2 k,t\}\big)^{4k/w}$.
%Further, under the same inequality on $n$ it satisfies the condition of Theorem~\ref{thmapproxtrivial} with $\epsilon = 0.5.$
%Consequently, if $\frac {w^2}{8k}\ge 4t$ and $\ff\subset \aaa$ is $t$-intersecting and satisfies $|\ff|\ge \frac 23|\aaa[T]|$ for any $t$-element $T$ then there is a choice of such $T$ that $|\ff\setminus \aaa[T]|<(k/n)^{\frac{w^2}{8k}}|\aaa|<(k/n)^{4t}|\aaa|.$
\end{prop}

This follows directly from Theorem~\ref{thmapproxtrivial} with $q = k$ and $\varepsilon = 0.5$.

%Yet again, under the conditions of Proposition~\ref{example42} and under additional assumptions $n>3k^2$ and $w^2 > 10 kt$ it is not difficult to derive that if $\ff\subset \aaa$ is $t$-intersecting and extremal then $\ff = \aaa[T]$ for some $t$-element $T$. Indeed, if $\ff\setminus \aaa[T]$ contains some $X$ then even the number of sets from the design that contain $T$ and do not intersect this particular $X$ outside $T$ is at least ${n-t\choose w-t}/{k-t\choose w-t}-k{n-t-1\choose w-t-1}/{k-t-1\choose w-t-1}>\frac 12{n-t\choose w-t}/{k-t\choose w-t}$, which is much more than $(k/n)^{\frac{w^2}{8k}}|\aaa|.$

\section{Concluding remarks}\label{sec6}
In this paper, we gave a powerful and versatile spread approximations  approach to get structural and extremal results for subfamilies of `sufficiently good' ambient families that avoid certain substructures. It seems that we only scratched the surface of possible applications to extremal set theory. There are several directions to explore using the methods of the present paper.

The first direction is to broaden the class of potential `ambient' families. The second  is to explore more complicated Turan-type properties, in the spirit of the paper by Keller and Lifshitz \cite{KL}. Below an example of such a statement for several families.

We say that $\ff_1,\ldots, \ff_s$ are  $t$-cross-dependent if for any $A_1\in \ff_1,\ldots, A_s\in \ff_s$ we have $|A_1\cup\ldots\cup A_s|\le \sum_{i=1}^s|A_i|-t$. In particular, $1$-cross-dependent coincides with the usual notion of cross-dependence (i.e., the absence of a cross-matching).
\begin{thm}\label{thm22}
  Let $n,k,s\ge 2$, $t\ge1$ be some integers. Consider families $\ff_1,\ldots, \ff_s\subset {[n]\choose k}$ that are $t$-cross-dependent. Let $r,q\ge 1$ satisfy the following restriction: $r> 2^{11} s\log_2(sk)$, $r\ge 2(s-1)q$, $n\ge rk$. Then there exist $t$-cross-dependent families $\mathcal S_1,\ldots, \mathcal S_s\subset {[n]\choose \le q}$ of sets of size at most $q$ and families $\ff'_1\subset \ff_1,\ldots, \ff'_s\subset \ff_s$ such that the following holds for each $j\in[s]$.
  \begin{itemize}
    \item For any $A\in \ff_j\setminus \ff'_j$ there is $B\in\s_j$ such that $B\subset A$;
    \item for any $B\in \s_j$ there is a family $\ff_B\subset \ff_j$ such that $\ff_B(B)$ is $r$-spread;
    \item $|\ff'_j|\le \Big(\frac {rk}n\Big)^{q+1}{n\choose k}$.
  \end{itemize}
  Moreover, if $\ff_1 = \ldots = \ff_s$ then $\mathcal S_1 = \ldots = \mathcal S_s$.
\end{thm}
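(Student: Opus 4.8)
The plan is to run the spread approximation procedure of Lemma~\ref{lem31} (with ambient family $\aaa={[n]\choose k}$) on each $\ff_j$ separately and then to show that the resulting approximations $\s_1,\dots,\s_s$ are $t$-cross-dependent, by the same random-colouring argument as in Theorem~\ref{thm2}, but with $s$ colours instead of $2$. Concretely, I would set $\tau:=\frac{n}{rk}$, which is at least $1$ since $n\ge rk$, and apply Lemma~\ref{lem31} with parameters $\tau$ and $q$ to each $\ff_j$, obtaining $\s_j\subset{[n]\choose\le q}$, remainders $\ff'_j\subset\ff_j$, and for each $B\in\s_j$ a subfamily $\ff_B\subset\ff_j$ with $\ff_B(B)$ being $\tau$-homogeneous. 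The first listed property and the bound $|\ff'_j|\le\tau^{-q-1}{n\choose k}=\big(\frac{rk}{n}\big)^{q+1}{n\choose k}$ are immediate from Lemma~\ref{lem31}; Observations~\ref{obs1} and~\ref{obs2} show that $\ff_B(B)$, being $\tau$-homogeneous as a family in ${[n]\setminus B\choose k-|B|}$, is $\frac{n-|B|}{\tau(k-|B|)}$-spread, and since $\frac{n-x}{k-x}$ is non-decreasing in $x$ for $n\ge k$ this is at least $\frac{n}{\tau k}=r$-spread, which gives the second listed property. Finally, the procedure is deterministic once a tie-breaking rule for the choice of the inclusion-maximal set is fixed, so running it on identical families returns identical outputs, and this yields the ``moreover'' clause.

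It remains to prove that $\s_1,\dots,\s_s$ are $t$-cross-dependent. Suppose not: there are $S_j\in\s_j$, $j\in[s]$, with $|S_1\cup\cdots\cup S_s|>\sum_{j=1}^s|S_j|-t$; write $U=S_1\cup\cdots\cup S_s$. For each $j$ set $\g_j:=\ff_{S_j}(S_j)$, a non-empty $r$-spread family of $(k-|S_j|)$-element sets in $[n]\setminus S_j$, and let $\g'_j:=\{G\in\g_j:G\cap(U\setminus S_j)=\emptyset\}$, viewed as a family in $[n]\setminus U$. Since $|U\setminus S_j|\le(s-1)q$ and $r\ge2(s-1)q$, we have $|\g'_j|\ge\big(1-\frac{|U\setminus S_j|}{r}\big)|\g_j|\ge\frac12|\g_j|$, and the trivial inclusion $\g'_j(Y)\subset\g_j(Y)$ then shows that $\g'_j$ is $\frac{r}{2}$-spread, the factor $2$ lost in passing from $\g_j$ to $\g'_j$ being absorbed since $2\le2^{|Y|}$ for $|Y|\ge1$.

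Next, I would colour $[n]\setminus U$ uniformly at random into $s$ colours, so that each colour class is a $\frac1s$-random subset. Put $m=\log_2(sk)$ and $\delta=(s\log_2(sk))^{-1}$, so that $m\delta=\frac1s$ and $\frac{r}{2}\delta>2^{10}$ by the hypothesis $r>2^{11}s\log_2(sk)$. Theorem~\ref{thmtao}, applied to each $\g'_j$ (whose associated measure has expected size at most $k$), shows that colour class $j$ contains a member of $\g'_j$ with probability strictly greater than $1-\big(\frac12\big)^{\log_2(sk)}k=1-\frac1s$; a union bound over the $s$ colours then gives positive probability that all $s$ of these events occur simultaneously. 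Fixing such a colouring and sets $F_j\in\g'_j$ with $F_j$ contained in colour class $j$, the sets $F_j$ are pairwise disjoint and disjoint from $U$, each $F_j\cup S_j$ lies in $\ff_{S_j}\subset\ff_j$, and
$$\Big|\bigcup_{j=1}^s(F_j\cup S_j)\Big|=|U|+\sum_{j=1}^s|F_j|>\sum_{j=1}^s(|S_j|+|F_j|)-t=\sum_{j=1}^s|F_j\cup S_j|-t,$$
contradicting the $t$-cross-dependence of $\ff_1,\dots,\ff_s$. This proves the claim and hence the theorem.

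The argument involves no essentially new difficulty compared with the $s=2$, $t$-intersecting case of Theorem~\ref{thm2}; the main point requiring care is tracking the spreadness constants through the $s$-fold restriction and the $s$-way union bound, which is exactly why the hypotheses carry a factor $s$ in $r>2^{11}s\log_2(sk)$ and a factor $s-1$ in $r\ge2(s-1)q$. One should also check that each $\g'_j$ stays non-empty and fits inside $[n]\setminus U$, which follows from $|\g'_j|\ge\frac12|\g_j|>0$ and $n\ge rk$.
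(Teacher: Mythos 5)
Your proposal is correct and follows essentially the same route as the paper's own proof: an iterative spread-approximation procedure applied to each $\ff_j$ separately, followed by restricting each $\g_j$ to $[n]\setminus U$ (losing only a factor $2$ in spreadness thanks to $r\ge 2(s-1)q$) and an $s$-colour random partition with Theorem~\ref{thmtao} and a union bound. The only cosmetic difference is that you route the selection step through Lemma~\ref{lem31} with $\tau=n/(rk)$ and then convert homogeneity to $r$-spreadness via Observations~\ref{obs1} and~\ref{obs2}, whereas the paper selects directly by the spreadness threshold $|\ff^i(S_i)|\ge r^{-|S_i|}|\ff^i_j|$; the two are interchangeable here.
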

From the imposed restrictions, this theorem gives interesting results for $n>Cks\log(ks)$ with some explicit $C$. It allows us to extend the results for $t$-intersecting families to $t$-cross-dependent families. We do not give its proof since it is virtually identical to that of Theorem~\ref{thm2}.

The third direction is to analyze the possibilities to combine the present approach with the already existing approaches, such as the delta-system method developed by Frankl and F\"uredi \cite{Fra3, Fra4, FF1, FF2}.

We finally note that the approach we used for the Erd\H os--S\'os problem does extend to more general structures, and guarantees that the spread approximation that we obtain not only forbids  the same structure, but also all structures that can be obtained from the initial one by replacing common elements of two or more sets with elements that each belong to only one set. It is possible that this can be further strengthened, but requires further research.

\section{Acknowledgements}
 This work was supported by a grant for research centers, provided by the Analytical Center for the Government of the Russian Federation in accordance with the subsidy agreement (agreement identifier 000000D730324P540002) and the agreement with the Moscow Institute of Physics and Technology dated November 1, 2021 No. 70-2021-00138.


\begin{thebibliography}{100}
\bibitem{AK} R. Ahlswede and L.H. Khachatrian, {\it The Complete Intersection Theorem for Systems of Finite Sets}, European Journal of Combinatorics. 18 (1997), 125--136.

\bibitem{AK2} R. Ahlswede and L.H. Khachatrian, {\it The Complete Nontrivial-Intersection Theorem for Systems of Finite Sets}, Journal of Combinatorial Theory, Series A. 76 (1996), 121--138.

\bibitem{Alw} R. Alweiss, S. Lovett, K. Wu, and J. Zhang, {\it Improved bounds for the sunflower lemma}, arXiv:1908.08483 (2019)

\bibitem{BCW} T. Bell, S. Chueluecha, and L. Warnke, {\it Note on sunflowers}, Disc. Math. 344 (2021), N7,  112367.


\bibitem{BT} B. Bollob\'as and A.G. Thomason, {\it Threshold functions}, Combinatorica 7 (1987), 35--38.

\bibitem{CK} P. Cameron and C.Y. Ku, {\it Intersecting families of permutations}, Europ. J. Comb. 24 (2003), 881--890.

\bibitem{DeF} M. Deza and P. Frankl, {\it On the maximum number of permutations with given maximal or minimal distance}, J. Combin. Theory, Ser. A 22 (1977), 352--360.

\bibitem{DF} I. Dinur and E. Friedgut, {\it Intersecting Families are Essentially Contained in Juntas}, Comb. Probab. Comput. 18 (2009), 107--122.

\bibitem{DS} I. Dinur and S. Safra, {\it On the hardness of approximating minimum vertex cover}, Annals of Mathematics 162 (2005), 439--485.

\bibitem{EFP} D. Ellis, E. Friedgut, and H. Pilpel, {\it Intersecting Families of Permutations}, J. Amer. Math. Soc. 24 (2011), 649--682.

\bibitem{EKN} D. Ellis, G. Kalai, and B. Narayanan, {\it On symmetric intersecting families}, Eur. J. Comb. 86 (2020), 103094.

\bibitem{EKL} D. Ellis, N. Keller, and N. Lifshitz, {\it Stability for the Complete Intersection Theorem, and the Forbidden Intersection Problem of Erd\H{o}s and S\'os}, arXiv:1604.06135 (2016).

\bibitem{EL} D. Ellis and N. Lifshitz, {\it Approximation by juntas in the symmetric group, and forbidden intersection problems}, Duke Math. Journal, in press

\bibitem{E74} P. Erd\H os, {\it Problems and results in graph theory and combinatorial analysis}, Proc. 5th
British Combinatorial Conference (1975), 169--192.

\bibitem{EKR} P. Erd\H os, C. Ko, and R. Rado, \textit{Intersection theorems for systems of finite sets}, The Quart. J. Math. 12 (1961), N1, 313--320.

\bibitem{ER} P. Erd\H os and R. Rado, {\it Intersection theorems for systems of sets}, J. London
Math. Soc. 35 (1960), N1, 85--90.

\bibitem{Fish} E. Fischer, G. Kindler, D. Ron, S. Safra, and A. Samorodnitsky, {\it Testing juntas}, Journal of Computer and System Sciences 68 (2004), 753--787.


\bibitem{F1} P. Frankl, \textit{The Erd\H os-Ko-Rado theorem is true for n=ckt}, Combinatorics (Proc. Fifth Hungarian Colloq., Keszthely, 1976), Vol. I, 365--375, Colloq. Math. Soc. J\'anos Bolyai, 18, North-Holland.

\bibitem{Fra1} P. Frankl, {\it On families of finite sets no two of which intersect in a singleton}, Bull. Austral. Math. Soc. 17 (1977), N1, 125--134.


\bibitem{Fra2} P. Frankl, {\it On intersecting families of finite sets}, J. Combin. Theory Ser. A 24 (1978), 146--161.

\bibitem{Fra3} P. Frankl, {\it On intersecting families of finite sets}, Bull. Austral. Math. Soc. 21 (1980), 363--372.

\bibitem{Fra4} P. Frankl,  \textit{Erdos-Ko-Rado theorem with conditions on the maximal degree}, J. Combin. Theory Ser. A 46 (1987), N2, 252--263.

\bibitem{FF1} P. Frankl and Z. F\"uredi, {\it Forbidding just one intersection}, J. Combin.  Theory Ser. A 39 (1985), 160--176.

\bibitem{FF2} P. Frankl and Z. F\"uredi, {\it Exact solution of some Tur\'an-type problems},  J. Combin.  Theory Ser. A 45 (1987), 226--262.

\bibitem{FF3} P. Frankl and Z. F\"uredi, \textit{Beyond the Erdos-Ko-Rado theorem},  J. Combin.  Theory Ser. A  56 (1991) N2, 182--194.

\bibitem{FR} P. Frankl and V. R\"odl, {\it Forbidden intersections}, Trans. Amer. Math. Soc. 300 (1987), 259--286.

\bibitem{FR2} P. Frankl and V. R\"odl, {\it A partition property of simplices in Euclidean space}, J. Amer. Math. Soc., 3 (1990), N1, 1--7.

\bibitem{FW} P. Frankl and R.Wilson, {\it Intersection theorems with geometric consequences}, Combinatorica
1 (1981), 357--368.

\bibitem{FKNP} K. Frankston, J. Kahn, B. Narayanan, and J. Park, {\it Thresholds versus fractional expectation-thresholds}, Annals of Mathematics 194 (2021), 475--495.

\bibitem{Fu1} Z. F\"uredi, {\it On finite set-systems whose every intersection is a kernel of a star}, Disc. Math. 47 (1983), 129--132.

\bibitem{IK} F. Ihringer and A. Kupavskii, {\it Regular intersecting families}, Disc. Appl. Math. 270 (2019), 142--152.
\bibitem{Juk} S. Jukna, {\it Extremal Combinatorics}, Springer Berlin Heidelberg, Berlin, Heidelberg, 2011.

\bibitem{KL} N. Keller and N. Lifshitz, {\it The Junta Method for Hypergraphs and Chv\'atal's Simplex Conjecture}, arXiv:1707.02643 (2017).

\bibitem{KLLM} P. Keevash, N. Lifshitz, E. Long, and D. Minzer, {\it Hypercontractivity for global functions and sharp thresholds}, arXiv:1906.05568  (2019).

\bibitem{KLLM2} P. Keevash, N. Lifshitz, E. Long, and D. Minzer, {\it Global hypercontractivity and its applications}, arXiv.2103.04604  (2021).

\bibitem{KeeL} P. Keevash and E. Long, {\it Frankl--R\"odl-type theorems for codes and permutations}, Trans. Amer. Math. Soc. 369 (2016), 1147--1162. https://doi.org/10.1090/tran/7015.

\bibitem{KMu} A. Kostochka and D. Mubayi, {\it The structure of large intersecting families}, Proc. Amer. Math. Soc. 145 (2016), 2311--2321.

\bibitem{KZ} A. Kupavskii and D. Zakharov, {\it Regular bipartite graphs and intersecting families}, 	J. Combin. Theory Ser. A 155 (2018), 180--189.

\bibitem{LM} B. Larose and C. Malvenuto, {\it Stable sets of maximal size in Kneser-type graphs}, Eur. J. Comb. 25 (2004), N5, 657--673.

\bibitem{Rai1} A.M. Raigorodskii, {\it Borsuk's problem and the chromatic numbers of some metric spaces}, Russian Math. Surveys, 56 (2001), N1, 103--139.

\bibitem{RV} A. Razborov and N. Vereshchagin, {\it One Property of Cross-Intersecting Families}, Proc. of Erd\H{o}s memorial Conference, Hungary 1999. ECCC, TR99-014

\bibitem{Tal} M. Talagrand, {\it Are many small sets explicitly small?}, Proceedings of the 2010 ACM International Symposium on Theory of Computing, 2010, 13--35.
\bibitem{Tao} T. Tao,{\it The sunflower lemma via shannon entropy}, \url{https://terrytao.wordpress.com/2020/07/20/the-sunflower-lemma-via-shannon-entropy/}
\end{thebibliography}
\end{document}